\documentclass{amsart}

\usepackage{amsmath, amsfonts, amsthm, amssymb, bm, stmaryrd, mathtools}
\usepackage{float, graphicx, epstopdf}
\usepackage{hyperref, cleveref}
\usepackage{color, xcolor, geometry}
\usepackage{tikz}

\usepackage{tikz}
\usetikzlibrary{calc}
\usepackage{subcaption} 

\newtheorem{theorem}{Theorem}[section]
\newtheorem{lemma}[theorem]{Lemma}
\newtheorem{corollary}[theorem]{Corollary}
\newtheorem{proposition}[theorem]{Proposition}

\theoremstyle{remark}
\newtheorem{remark}[theorem]{Remark}

\theoremstyle{definition}
\newtheorem{definition}[theorem]{Definition}
\newtheorem{example}[theorem]{Example}
\newtheorem{assumption}{Condition}
\newtheorem{decomposition}{Decomposition}

\numberwithin{equation}{section}

\DeclareMathOperator{\Span}{span}
\DeclareMathOperator{\card}{Card}

\definecolor{pink}{RGB}{255,45,115}

\usepackage{enumitem}
\calclayout

\begin{document}

\title[A Construction of $C^{r}$ Finite Elements on the Alfeld Split]{A Construction of $C^{r}$ Conforming Finite Elements on the Alfeld Split in Any Dimension}


\author{Ting Lin}
\address{School of Mathematical Sciences, Peking University, Beijing 100871, P. R. China}
\curraddr{}
\email{lintingsms@pku.edu.cn}
\thanks{}

\author{Hendrik Speleers}
\address{Department of Mathematics, University of Rome Tor Vergata, Rome 00133, Italy}
\curraddr{}
\email{speleers@mat.uniroma2.it}
\thanks{}

\author{Qingyu Wu}
\address{School of Mathematical Sciences, Peking University, Beijing 100871, P. R. China}
\curraddr{}
\email{wu\_qingyu@pku.edu.cn}
\thanks{}

\subjclass[2020]{65N30, 65D07, 41A15}

\keywords{}

\date{}

\dedicatory{}

\begin{abstract}
Constructing $C^r$ conforming finite element spaces in any dimension is a long-standing problem. For general triangulations, this problem was recently addressed by Hu-Lin-Wu (2024), under certain conditions on supersmoothness and polynomial degree. In this paper, a first unified construction on the Alfeld split in any dimension is given, where the supersmoothness conditions and the polynomial degree requirement are relaxed. 
\end{abstract}

\maketitle

\section{Introduction}
\label{sec:introduction}

Constructing $C^r$ conforming finite element spaces for arbitrary smoothness $r \in \mathbb{N}_{0}$ in any dimension $d \in \mathbb{N}$ is a long-standing problem. In \cite{hu2024construction}, the first construction was given for general triangulations. Specifically, for a given continuity vector $\bm{r} := (r_{1}, r_{2}, \ldots, r_{d}) \in \mathbb{N}_{0}^{d}$ and polynomial degree $k \in \mathbb{N}_{0}$ such that 
\begin{equation}
\label{eq:assumption-tetra}
    2^{d - 1} r_{1} \le 2^{d - 2} r_2 \le \cdots \le r_{d}, \quad 2 r_{d} + 1 \le k,
\end{equation}
one can construct degrees of freedom such that the resulting finite element space has $C^{r_s}$ continuity on $(d-s)$-subsimplices. A natural question then arises: 

\begin{quote} 
\medskip
\emph{Can the conditions on $\bm{r}$ and $k$ be lowered, such that $C^r$ conforming finite element spaces can be constructed using fewer degrees of freedom?} 
\medskip
\end{quote}

Unfortunately, in \cite{hu2025sharpness} (see also \cite{zenisek1974} for the case $d=2$), it was shown that the conditions cannot be improved if a general triangulation is considered. However, several macro-element constructions in two and three dimensions indicate that the conditions are not tight for them; see, e.g., \cite{lai2007} and references therein.
Such a macro-element is obtained by decomposing an element into subelements according to a specific rule; this is often referred to as \emph{element split} or just \emph{split}. 

In two dimensions, one of the oldest and most popular macro-element constructions is the $C^1$ Clough--Tocher element \cite{ciarlet1974,clough1965}, with $12$ degrees of freedom. This is a piecewise cubic polynomial defined on a triangle subdivided into $3$ subtriangles, known as the Clough--Tocher split. For comparison's sake, we note that a minimum degree of $k=5$ is required to obtain a $C^1$ finite element on a triangle without split, which is achieved by the classical Argyris element \cite{argyris1968}, with $21$ degrees of freedom. Besides, on the Clough--Tocher split, several generalizations to higher degree and higher smoothness can be found in the literature; see, e.g., \cite{alfeld2002a,douglas1979,groselj2022,lai2001} and also \cite{lai2007}.
Alternatively, two prominent $C^1$ triangular macro-element constructions were proposed by Powell and Sabin \cite{powell1977}, with $9$ and $12$ degrees of freedom. These are both piecewise quadratic polynomials defined on a triangle subdivided into $6$ and $12$ subtriangles, respectively.
Also on these splits, various generalizations to higher degree and higher smoothness are available; see, e.g., \cite{alfeld2002b,groselj2016,lai2003,schumaker2006,speleers2013a} and also \cite{lai2007}.
A remarkable family of triangular splits was introduced by Wang and Shi \cite{wang1990}, enabling macro-element constructions of maximal smoothness, i.e., $C^{k-1}$ macro-elements of any degree $k$; see also \cite{lyche2022,lyche2024}. There are a few more (but unusual) triangular splits, such as in \cite{alfeld1984b,wang1992}.

In three dimensions, the most common tetrahedral split is the Alfeld split \cite{alfeld1984a}, leading to a $C^1$ quintic macro-element, with $68$ degrees of freedom \cite{awanou2002} or $65$ degrees of freedom \cite{lai2007}. Again, for comparison's sake, we note that a minimum degree of $k=9$ is required to obtain a $C^1$ finite element on a tetrahedron without split \cite{zenisek1973}, with $220$ degrees of freedom. On the Alfeld split, generalizations to higher degree and higher smoothness were proposed in \cite{alfeld2005,lai2013}.
There are also tetrahedral splits designed for the construction of $C^1$ cubic macro-elements \cite{alfeld2009,worsey1987} (with $44$ or $28$ degrees of freedom) and $C^1$ quadratic macro-elements \cite{schumaker2009,worsey1988} (with $44$ or $16$ degrees of freedom), but their constructions require a large number of subtetrahedra and/or restrictive geometric constraints.
Some of these constructions can also be applied in higher dimensions; see \cite{sorokina2008,speleers2013b,worsey1987}.

\begin{figure}[htbp]
    \centering
    \includegraphics[height = 5cm]{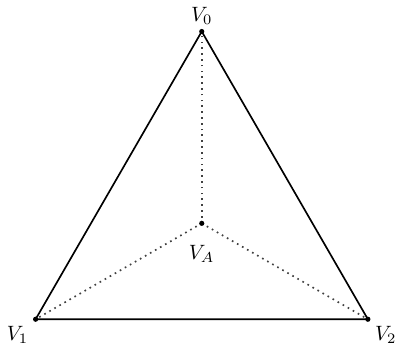}
    \qquad
    \includegraphics[height = 5cm]{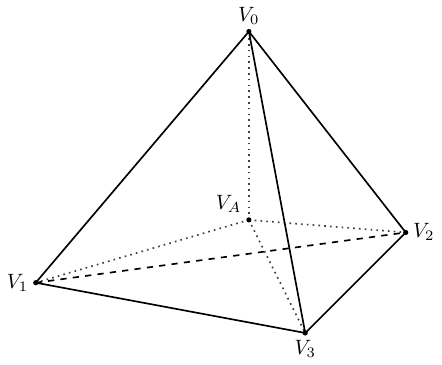}
    \caption{Alfeld split in two dimensions (Clough--Tocher split) and three dimensions.}
    \label{fig:barycentric-subdivision}
\end{figure}

In this paper, we are interested in the \emph{Alfeld split} in $d$ dimensions, which is constructed as follows. 
Let $K$ be a $d$-dimensional simplex, with vertices $V_0, V_1, \ldots, V_d$, and let $V_A$ be an interior point (usually the barycenter) of $K$. For each $j = 0, 1, \ldots, d$, we obtain the smaller simplex $K_j$ by adding $V_A$ to the vertices of $K$ and removing the vertex $V_j$.
Then, the Alfeld split of $K$, denoted by $\mathcal T_A(K)$, is defined as the set of the $(d+1)$ smaller simplices $K_j$. The point $V_A$ is called the (Alfeld) split point of $K$. In two dimensions, this split coincides with the Clough--Tocher split \cite{clough1965}. The Alfeld split in three dimensions was developed in \cite{alfeld1984a}. Higher-dimensional versions were considered in, e.g., \cite{kolesnikov2014,lyche2025,schenck2014}. An illustration of the Alfeld split in two and three dimensions is shown in \Cref{fig:barycentric-subdivision}.

Let the index set $\mathbb{I}_{d} := \{0, 1, \ldots, d\}$, and let $\mathcal{P}_{k}(K_j)$ be the space of $d$-variate polynomials up to total degree $k$ on the $d$-dimensional simplex $K_j$.
For the Alfeld split $\mathcal T_A(K)$ of $K$, we define the \emph{superspline space} with the continuity vector $\bm{r} := (r_{1}, \ldots, r_{d}) \in \mathbb{N}_{0}^{d}$, the split point continuity $\rho \in \mathbb{N}_{0}$, and the polynomial degree $k \in \mathbb{N}_{0}$, as follows:
    \begin{equation}
        \label{eq:shape-function}
        \begin{aligned}
            \mathcal{S}_{k}^{\bm{r}, \rho}(\mathcal{T}_{A}(K)) := \big\{u \in C^{r_{1}}(K): & \; u|_{K_{j}} \in \mathcal{P}_{k}(K_{j}), \text{ for each } j \in \mathbb{I}_{d}; \\
            & \; \nabla^{n} u \text{ is single-valued on } F, \\
            & \; \text{for each } t \text{-dimensional } (0 \le t \le d - 2) \\
            & \; \text{subsimplex } F \text{ of } K \text{ and } 0 \le n \le r_{d - t}; \\
            & \; \nabla^{n} u \text{ is single-valued at } V_{A} \text{ for } 0 \le n \le \rho\big\}.
        \end{aligned}
    \end{equation}
For later use, we also define the boundary layer number $b \in \mathbb{N}_{0}$ as
    \begin{equation*}
        b := k - \rho.
    \end{equation*}

This paper discusses the construction of macro-elements on Alfeld splits in general dimension. To this end, the following assumption on the continuity vector $\bm{r} \in \mathbb{N}_{0}^{d}$, the split point continuity $\rho \in \mathbb{N}_{0}$, the polynomial degree $k \in \mathbb{N}_{0}$, and the boundary layer number $b \in \mathbb{N}_{0}$ is given. 
\begin{assumption}
    \label{asm:Cr-macro-element}
    For the continuity vector $\bm{r} := (r_{1}, \ldots, r_{d}) \in \mathbb{N}_{0}^{d}$, the split point continuity $\rho \in \mathbb{N}_{0}$, the polynomial degree $k \in \mathbb{N}_{0}$, and the boundary layer number $b \in \mathbb{N}_{0}$, we say that $\bm{r}$, $\rho$, $k$, and $b$ jointly satisfy \Cref{asm:Cr-macro-element} if 
    \begin{equation*}
        1 \le \bigg\lceil\frac{3 r_{1} - 1}{2}\bigg\rceil \le r_{2} \le 2 r_{1} - 1,
    \end{equation*}
    \begin{equation*}
        2^{d - 2} r_{2} \le \cdots \le r_{d}, \quad 2 r_{d} + 1 \le k,
    \end{equation*}
    and
    \begin{equation*}
        \rho = k - b, \quad 2 r_{1} - r_{2} \le b \le r_{2} - r_{1} + 1.
    \end{equation*} 
\end{assumption}

Note that $k$ and $b$ completely determine $\rho$. 
Since $(r_{2} - r_{1} + 1) - (2 r_{1} - r_{2}) = 2 r_{2} - 3 r_{1} - 1 \ge 0$, the restriction on $b$ is well posed. We only consider the case where $r_{1} \ge 1$ and $r_2 \le 2r_1 - 1$ because otherwise (if $r_{1} = 0$ or $r_2 \ge 2r_1$) the restrictions on $\bm{r}$ in \Cref{asm:Cr-macro-element} would be the same as \eqref{eq:assumption-tetra}.

The main result of this paper can be summarized as follows.
\begin{theorem}
    \label{thm:Cr-macro-element-intro}
    Suppose that $\bm{r} := (r_{1}, \ldots, r_{d})$, $\rho$, $k$, and $b$ jointly satisfy \Cref{asm:Cr-macro-element}. Then, a $C^{r_1}$ conforming finite element space can be constructed with the local shape function space $\mathcal S_k^{\bm{r},\rho}(\mathcal T_A(K))$ defined in \eqref{eq:shape-function}.
\end{theorem}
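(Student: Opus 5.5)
The plan is to construct an explicit unisolvent set of degrees of freedom for $\mathcal S_k^{\bm r,\rho}(\mathcal T_A(K))$. The boundary DOFs will be organized so that the DOFs attached to a subsimplex $F$ of $K$ depend only on $u$ near $F$. Global $C^{r_1}$ conformity then follows in the standard way.

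\emph{Boundary layers.} We use Bernstein--B\'ezier (domain point) coordinates on each subsimplex $K_j$. Since $u$ is $C^\rho$ at $V_A$, we split the domain points of $\mathcal T_A(K)$ into two groups:
\begin{itemize}
\item the disk of radius $\rho$ around $V_A$, which is governed by a single polynomial of degree $\rho$ (its Taylor data at $V_A$);
\item the $b=k-\rho$ layers nearest the boundary facets $\partial K$.
\end{itemize}
On the boundary layers we reuse the construction of \cite{hu2024construction}. For each $t$-subsimplex $F$ of $K$ with $t\le d-2$, we take the derivative DOFs $\partial^\alpha u$ on $F$ up to order $r_{d-t}$, weighted by polynomial moments, exactly as in \cite{hu2024construction}. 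The conditions $2^{d-2}r_2\le\cdots\le r_d$ and $2r_d+1\le k$ are precisely what is needed there for $t\le d-2$. The inequality $2^{d-2}r_1\le r_2$ is used in \cite{hu2024construction} only to separate the vertex/edge data from the facet data, and this is where the split enters.

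\emph{Interior facets.} Consider a facet $f=K\cap\partial K_j$ (dimension $d-1$), with $C^{r_1}$ continuity required across $f$. Instead of placing the normal derivative DOFs of orders $0,\dots,r_1$ on $f$ with the weak requirement $r_2\ge 2r_1$, we exploit that only $b$ layers of $K_j$ near $f$ are free, the rest being fixed by the $C^\rho$ data at $V_A$. The condition $b\le r_2-r_1+1$ ensures that the portion of these $b$ layers not already determined by the $C^{r_2}$ data on the $(d-2)$-faces splits cleanly into the normal derivatives up to order $r_1$ on $f$, which become facet DOFs. The bound $2r_1-r_2\le b$ ensures that the smoothness conditions across the interior faces $[V_A,F]$ of the split, for orders $\le r_1$, do not overdetermine these layers. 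Concretely, we will count the domain points in the ring $\{\text{distance}\le b-1 \text{ from } \partial K\}$. We then check that the $C^{r_1}$ smoothness conditions across the interior faces of $\mathcal T_A(K)$ are uniquely solvable, given the boundary DOFs and the central polynomial. This is a lower-dimensional Alfeld-type problem on each $f$ with its cone, handled by induction on $d$ via the standard "minimal determining set" argument of Lai--Schumaker.

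\emph{Remaining interior DOFs and unisolvence.} The remaining interior DOFs are the moments of the polynomial of degree $\rho$ at $V_A$, minus whatever is already fixed through the interior smoothness conditions, together with leftover interior domain points. Unisolvence is shown as follows:
\begin{enumerate}
\item \emph{Dimension count.} The DOF count equals $\dim\mathcal S_k^{\bm r,\rho}$, which we compute from the determining set.
\item \emph{Vanishing argument.} If all DOFs vanish, the $C^{r_{d-t}}$ data on every boundary subsimplex is zero by \cite{hu2024construction}, so the boundary layers vanish. Hence $u$ is divisible by powers of the boundary barycentric coordinates, and the central polynomial, now vanishing to high order on $\partial K$, is killed by the interior moments.
\end{enumerate}

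The main obstacle is the interior facet step. We must show that the $C^{r_1}$ conditions across the split faces, together with the window $2r_1-r_2\le b\le r_2-r_1+1$, give a consistent and uniquely solvable system in the boundary ring. I would first attempt this in $d=2$ (Clough--Tocher with supersmoothness), checking against known $C^r$ Clough--Tocher macro-elements, and then lift the argument dimension by dimension.
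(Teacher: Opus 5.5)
\textbf{There is a genuine gap: you prove only the upper bound on the dimension.}

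Your vanishing argument is essentially sound. If all functionals vanish, the Hu--Lin--Wu elements on each face $F$ (continuity vector $\bm q_{t,n}$, degree $k-n$) give $\nabla^n u|_F=0$ for $n\le r_{d-t}$. In particular $u$ vanishes to order $b-1$ on $\partial K$ and is $C^\rho$ at $V_A$. So $u$ is determined by a single polynomial of degree $\rho$ (its Taylor data at $V_A$), which vanishes to correspondingly shifted orders on the faces of $K$, and suitably chosen interior moments kill it.

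But this only proves $\dim\mathcal S_k^{\bm r,\rho}(\mathcal T_A(K))\le$ (number of functionals). The reverse inequality is missing. You would need to show that the space contains enough functions, i.e.\ that the $C^{r_1}$ conditions across the split faces $[V_A,E]$ are consistent for every choice of the data. You postpone exactly this to an unspecified ``minimal determining set argument by induction on $d$''. That step is the substance of the theorem, and you give no mechanism for it.

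Your heuristic for this step is also off, in two ways:
\begin{itemize}
\item Since $b\le r_2-r_1+1\le r_1$, the $b$ boundary layers of $K_j$ carry only the normal derivatives of orders $0,\dots,b-1$ on $F_{\setminus j}$. Orders $b,\dots,r_1$ involve coefficients with $\alpha_j\ge b$, so they belong to the central polynomial.
\item The two inequalities play the reverse of the roles you assign. $b\le r_2-r_1+1$, i.e.\ $r_1\le r_2-b+1$, is what makes the split faces harmless. $2r_1-r_2\le b$, i.e.\ $2(r_1-b)\le r_2-b$, is what keeps the order-$\ge b$ facet data on two facets sharing a $(d-2)$-face independent.
\end{itemize}

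\textbf{The missing idea is the lift $Q_k^\rho$.} On $K_j$ it sends $\sum_{\bm\beta}c_{\bm\beta}\llbracket\bm\lambda\rrbracket_j^{\bm\beta}$ to $\sum_{\bm\beta}c_{\bm\beta}\llbracket\bm\lambda\rrbracket_j^{\bm\beta+b\bm e_j}$. It has two key properties:
\begin{itemize}
\item it maps $\mathcal P_\rho(K)$ bijectively onto the piecewise polynomials that vanish to order $b-1$ on $\partial K$ and are $C^\rho$ at $V_A$;
\item it raises vanishing orders on faces of $K$ by exactly $b$.
\end{itemize}

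The decisive computation is then explicit, with no system to solve. Suppose $p$ vanishes to order $r_2-b$ on a $(d-2)$-face $E$. Expand the difference of the polynomial extensions of the two pieces of $Q_k^\rho(p)$ adjacent to $[V_A,E]$ in $K_j$-coordinates. Its coefficients with $\alpha_{j'}\le r_2-b+1$ all vanish:
\begin{itemize}
\item those with $\alpha_j\le b-1$ because $Q_k^\rho(p)$ vanishes to order $r_2$ on $E$;
\item those with $\alpha_j\ge b$ because of the $C^\rho$ condition at $V_A$.
\end{itemize}
Since $r_1\le r_2-b+1$, $Q_k^\rho(p)$ is therefore $C^{r_1}$ across $[V_A,E]$.

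This yields $\mathcal S_k^{\bm r,\rho}(\mathcal T_A(K))=\mathcal P^\partial\oplus\mathcal Q^\circ$, where:
\begin{itemize}
\item $\mathcal P^\partial$ is spanned by the Bernstein polynomials of the non-interior indices in the Hu--Lin--Wu decomposition of $\Sigma(\mathbb I_d,k)$ for $\bm r^\partial=(b-1,r_2,\dots,r_d)$;
\item $\mathcal Q^\circ$ is the $Q_k^\rho$-image of the Bernstein polynomials of the interior and facet-order-$\le r_1-b$ indices in the decomposition of $\Sigma(\mathbb I_d,\rho)$ for $\bm r^\circ=\bm r-b$.
\end{itemize}
The window $2r_1-r_2\le b\le r_2-r_1+1$ is exactly what makes both vectors admissible for Hu--Lin--Wu. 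This gives the lower bound on the dimension and the matching of functionals with basis elements in every dimension at once. No induction on $d$ or case analysis of smoothness conditions is needed.
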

Further details of this finite element construction are provided in \Cref{thm:Cr-macro-element}. For the corresponding set of degrees of freedom, we refer to \Cref{def:local-dofs}.

Given a triangulation $\mathcal{T}(\Omega)$ of a $d$-dimensional domain $\Omega$, let $\mathcal{T}_A(\Omega)$ be the refined triangulation obtained by applying the Alfeld split $\mathcal T_A(K)$ to each $d$-dimensional simplex $K$ of $\mathcal{T}(\Omega)$. Under the assumption that $\bm{r} := (r_{1}, \ldots, r_{d})$, $\rho$, $k$, and $b$ jointly satisfy \Cref{asm:Cr-macro-element}, the presented finite element construction gives rise to the following superspline space
    \begin{equation}
        \label{eq:superspline}
        \begin{aligned}
            \mathcal{S}_{k}^{\bm{r}, \rho}(\mathcal{T}_{A}(\Omega)) := \big\{u \in C^{r_{1}}(\Omega): & \; u|_{K} \in \mathcal{S}_{k}^{\bm{r}, \rho}(\mathcal{T}_{A}(K)) \text{ for each } d \text{-simplex } K \text{ of } \mathcal{T}(\Omega); \\
            & \; \nabla^{n} u \text{ is single-valued on } F, \\
            & \; \text{for each } t \text{-dimensional } (0 \le t \le d - 2) \\
            & \; \text{subsimplex } F \text{ of } \mathcal{T}(\Omega) \text{ and } 0 \le n \le r_{d - t}\big\}.
        \end{aligned}
    \end{equation}
\Cref{thm:dim-superspline} specifies its dimension.

\begin{table}[htbp]
    \centering
    \begin{tabular}{c|c|c|c|c}
        \hline
        $r_{1}$ & $r_{s}$ ($2 \le s \le d$) & $\rho$ & $k$ & $b$ \\
        \hline
        $2 m - 1$ & $2^{s - 2} \cdot (3 m - 2)$ & $(2^{d - 1} \cdot 3 - 1) m - 2^{d} + 1$ & $2^{d - 1} \cdot (3 m - 2) + 1$ & $m$ \\
        \hline
        $2 m$ & $2^{s - 2} \cdot 3 m$ & $(2^{d - 1} \cdot 3 - 1) m + 1$ & $2^{d - 1} \cdot 3 m + 1$ & $m$ \\
        \hline
    \end{tabular}    
    \caption{The local shape function space $\mathcal S_k^{\bm{r},\rho}(\mathcal T_A(K))$ in \Cref{thm:Cr-macro-element-intro} requires that the continuity vector $\bm{r} := (r_{1}, \ldots, r_{d}) \in \mathbb{N}_{0}^{d}$, the split point continuity $\rho \in \mathbb{N}_{0}$, the polynomial degree $k \in \mathbb{N}_{0}$, and the boundary layer number $b \in \mathbb{N}_{0}$ jointly satisfy \Cref{asm:Cr-macro-element}. The table summarizes the corresponding values after sequentially minimizing $k$ and $b$, for any $m \ge 1$ and $d \ge 1$.}
    \label{tab:minimum-degree}
\end{table}

Given $r_{1} = r \ge 1$, the minimal choice of $\bm{r}$, $\rho$, $k$, and $b$ of the local shape function space $\mathcal S_k^{\bm{r},\rho}(\mathcal T_A(K))$ is summarized in Table~\ref{tab:minimum-degree}. In two dimensions, the minimum degree (with respect to $r$) assumed in this paper is $k = 6 m - 3$ if $r = 2m - 1$ and $k = 6 m + 1$ if $r = 2 m$. This aligns well with the degree of the bivariate $C^r$ macro-elements proposed in \cite{lai2001}. In the same work, it was shown that those degrees are optimal for the Clough--Tocher split in two dimensions. In three dimensions, the minimum degree assumed in this paper is $k = 12 m - 7$ if $r = 2 m - 1$ and $k = 12 m + 1$ if $r = 2 m$. This is in agreement with the degree of the trivariate $C^r$ macro-elements proposed in \cite{lai2013}.
To the best of our knowledge, there are no similar results available in the literature for higher spatial dimensions ($d > 3$).

The finite element construction proposed in \cite{hu2024construction} must satisfy the conditions in \eqref{eq:assumption-tetra}, hence the corresponding minimum degree is $k=2^d r+1$ to achieve $C^r$ continuity in $d$ dimensions. This implies that the new construction allows us to lower the minimum degree with $2^dm$ for $r = 2 m - 1$ or $r = 2 m$ in $d$ dimensions.

The remainder of the paper is organized as follows. In \Cref{sec:notation}, we provide some introductory notations related to barycentric coordinates and monomials with respect to a simplex and its Alfeld split. We also define different sets of multi-indices. Our main result is summarized in \Cref{sec:construction}. After defining a specific projection operator, we describe the degrees of freedom of our finite element space under investigation. We also give explicit expressions for the dimension of the corresponding superspline space. In \Cref{sec:decomposition}, we propose a pair of refined intrinsic decompositions for the Alfeld split, which allow us to rewrite the degrees of freedom in an equivalent form. Then, several properties of the projection operator are provided in \Cref{sec:properties}. The unisolvence and the continuity of the finite element space are addressed in \Cref{sec:unisolvence,sec:continuity}, respectively. In \Cref{sec:dimension}, we discuss the dimension of the superspline space in more detail. We end with some concluding remarks in \Cref{sec:conclusion}.

\section{Barycentric Coordinates and Multi-Index Sets} 
\label{sec:notation}

In this section, we introduce barycentric coordinates and related monomials, both with respect to a simplex $K$ and its refinement $\mathcal T_A(K)$. We also define different sets of multi-indices.

We start with some notations in a simplex in $d$ dimensions. For the $d$-dimensional simplex $K$, let $\lambda_{0}, \lambda_{1}, \ldots, \lambda_{d}$ be the barycentric coordinates with respect to $K$, corresponding to its vertices $V_{0}, V_{1}, \ldots, V_{d}$, respectively. Let $\mathcal{T}^{\partial}(K)$ be the collection of all the $t$-dimensional ($0 \le t \le d - 1$) subsimplices $F$ of $K$.
    
Let the index set $\mathbb{I}_{d} := \{0, 1, \ldots, d\}$, and define the normalized monomial as
\begin{equation}
    \label{eq:normalized-monomial}
    \llbracket\bm{\lambda}\rrbracket^{\bm{\alpha}} := \prod_{i \in \mathbb{I}_{d}} \frac{\lambda_{i}^{\alpha_{i}}}{\alpha_{i}!} = \frac{\lambda_{0}^{\alpha_{0}}}{\alpha_{0}!} \cdot \frac{\lambda_{1}^{\alpha_{1}}}{\alpha_{1}!} \cdot \cdots \cdot \frac{\lambda_{d}^{\alpha_{d}}}{\alpha_{d}!},
\end{equation}
where $\bm{\alpha} := (\alpha_{0}, \alpha_{1}, \ldots, \alpha_{d}) \in \mathbb{N}_{0}^{d + 1}$ is a multi-index. Moreover, define the sum of $\bm{\alpha}$ as
\begin{equation}
    \label{eq:sum}
    |\bm{\alpha}| := \sum_{i \in \mathbb{I}_{d}} \alpha_{i} = \alpha_{0} + \alpha_{1} + \cdots + \alpha_{d},
\end{equation}
and the multi-index set
\begin{equation*}
    \Sigma(\mathbb{I}_{d}, k) := \bigg\{\bm{\alpha} := (\alpha_{0}, \alpha_{1}, \ldots, \alpha_{d}) \in \mathbb{N}_{0}^{d + 1}: |\bm{\alpha}| := \sum_{i \in \mathbb{I}_{d}} \alpha_{i} = k\bigg\}.
\end{equation*}

For $0 \le t \le d - 1$, and a $t$-dimensional subsimplex $F$ of $K$ with vertices $V_{i_{0}}, V_{i_{1}}, \ldots, V_{i_{t}}$, let $\mathbb{I}_{F} := \{i_{0}, i_{1}, \ldots, i_{t}\}$ be the vertex index set, and denote $\mathbb{I}_{\setminus F} := \mathbb{I}_{d} \setminus \mathbb{I}_{F} = \{i_{t + 1}, \ldots, i_{d}\}$. In particular, when $t=0$ and $F$ is a vertex $V$, the notations $\mathbb I_{V}$ and $\mathbb I_{\setminus V}$ are also used. Moreover, denote $\mathbb{I}_{d, \setminus t} := \mathbb{I}_{d} \setminus \mathbb{I}_{t} = \{t + 1, \ldots, d\}$. We additionally define the partial sums
\begin{equation*}
    |\bm{\alpha}|_{F} := \sum_{i \in \mathbb{I}_{F}} \alpha_{i} = \alpha_{i_{0}} + \alpha_{i_{1}} + \cdots + \alpha_{i_{t}},
\end{equation*}
and
\begin{equation*}
    |\bm{\alpha}|_{\setminus F} := |\bm{\alpha}| - |\bm{\alpha}|_{F} = \sum_{i \in \mathbb{I}_{\setminus F}} \alpha_{i} = \alpha_{i_{t + 1}} + \cdots + \alpha_{i_{d}}.
\end{equation*}

\begin{remark}
    The normalized monomial $\llbracket\bm{\lambda}\rrbracket^{\bm{\alpha}}$, multiplied with the factor $|\bm{\alpha}|!$, is commonly referred to as Bernstein basis polynomial of degree $|\bm{\alpha}|$.
\end{remark}

Now, we introduce the barycentric coordinates and normalized monomial with respect to the finer simplices in the Alfeld split.
Recall that $\mathcal{T}_{A}(K)$ of $K$ is defined as
    \begin{equation*}
        \mathcal{T}_{A}(K) := \{K_{j} : j \in \mathbb{I}_{d}\} = \{K_{0}, K_{1}, \ldots, K_{d}\},
    \end{equation*}
    where $K_{j}$ is the $d$-dimensional simplex with vertices $V_{A}$ and $V_{i}$ for $i \in \mathbb{I}_{\setminus j} := \mathbb{I}_{d} \setminus \{j\}$. Let $\lambda_{j, 0}, \lambda_{j, 1}, \ldots, \lambda_{j, d}$ be the barycentric coordinates with respect to $K_{j}$, corresponding to its vertices $V_{j, 0}, V_{j, 1}, \ldots, V_{j, d}$, respectively, where $V_{j, j} = V_{A}$ and $V_{j, i} = V_{i}$ for $i \in \mathbb{I}_{\setminus j}$. This notation is illustrated in \Cref{fig:2d-triangle-barycentric} for $d = 2$.

\begin{figure}[htbp]
    \centering
    \includegraphics[height = 5cm]{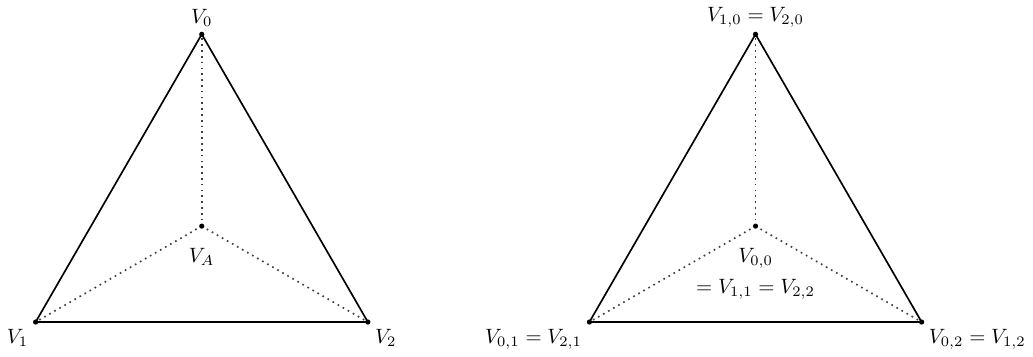}
    \caption{Illustration of the notation for the vertices in the Alfeld split in two dimensions.}
    \label{fig:2d-triangle-barycentric}
\end{figure}

We have the following relationship between different sets of barycentric coordinates.
\begin{lemma}
    Suppose that $V_{A}$ has the barycentric coordinates $(\mu_{0}, \mu_{1}, \dots, \mu_{d})$ with respect to $K$. Then, its barycentric coordinates with respect to $K_{j}$ can be determined as
    \begin{equation}
        \label{eq:transformation}
        \lambda_{j, j} = \frac{1}{\mu_{j}} \lambda_{j}, \quad \lambda_{j, i} = \lambda_{i} - \frac{\mu_{i}}{\mu_{j}} \lambda_{j} \text{ for } i \in \mathbb{I}_{\setminus j}.
    \end{equation}
\end{lemma}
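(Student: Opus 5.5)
The plan is to use uniqueness of barycentric coordinates. The candidate functions $\lambda_{j,0},\dots,\lambda_{j,d}$ in \eqref{eq:transformation} are affine, since each is a linear combination of the affine functions $\lambda_i$. Barycentric coordinates with respect to the nondegenerate simplex $K_j$ are the unique affine functions that sum to one and satisfy the Kronecker property at the vertices of $K_j$. So it suffices to check two things: the proposed functions sum to $1$ identically, and they take the value $\delta_{l,i}$ at the vertex $V_{j,l}$ for each $l \in \mathbb{I}_d$.

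For the vertex conditions there are two cases. The first case is $V_{j,l} = V_l$ with $l \neq j$. There $\lambda_j(V_l) = 0$ and $\lambda_i(V_l) = \delta_{il}$. Hence $\lambda_{j,j}(V_l) = 0$ and $\lambda_{j,i}(V_l) = \delta_{il}$, as required. The second case is the split point $V_{j,j} = V_A$, where $\lambda_i(V_A) = \mu_i$. Then $\lambda_{j,j}(V_A) = \mu_j/\mu_j = 1$, and for $i \ne j$ we get $\lambda_{j,i}(V_A) = \mu_i - \frac{\mu_i}{\mu_j}\mu_j = 0$. Division by $\mu_j$ is legitimate because $V_A$ is an interior point of $K$, so every $\mu_j > 0$.

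For the partition of unity, summing \eqref{eq:transformation} gives
\[
\sum_{i \in \mathbb{I}_d} \lambda_{j,i} = \sum_{i\ne j}\lambda_i + \frac{\lambda_j}{\mu_j}\Big(1 - \sum_{i \ne j}\mu_i\Big).
\]
Using $\sum_i \mu_i = 1$, the bracket equals $\mu_j$, so the sum is $\sum_i \lambda_i = 1$. Strictly, this step is redundant: an affine function equal to $1$ at all $d+1$ affinely independent vertices of $K_j$ is identically $1$.

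The only point needing care is justifying uniqueness, namely that an affine function is determined by its values at $d+1$ affinely independent points. This holds because $K_j$ is nondegenerate, which follows from $\mu_j > 0$: $V_A$ does not lie on the facet of $K$ opposite $V_j$. With uniqueness in hand, the vertex checks identify the given functions as the barycentric coordinates of $K_j$.
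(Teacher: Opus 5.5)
Your proposal is correct and takes essentially the paper's approach: the paper only says the result ``follows from a direct calculation,'' and your argument carries that calculation out. You check that the candidate affine functions take Kronecker values at the vertices of $K_j$, and you justify uniqueness via the nondegeneracy of $K_j$, which follows from $\mu_j > 0$.
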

\begin{proof}
    The results follow from a direct calculation.
\end{proof}

Given multi-index $\bm{\alpha} := (\alpha_{0}, \alpha_{1}, \ldots, \alpha_{d}) \in \mathbb{N}_{0}^{d + 1}$, we define the normalized monomial with respect to $K_{j}$ as
\begin{equation*}
    \llbracket\bm{\lambda}\rrbracket_{j}^{\bm{\alpha}} := \prod_{i \in \mathbb{I}_{d}} \frac{\lambda_{j, i}^{\alpha_{i}}}{\alpha_{i}!} = \frac{\lambda_{j, 0}^{\alpha_{0}}}{\alpha_{0}!} \cdot \frac{\lambda_{j, 1}^{\alpha_{1}}}{\alpha_{1}!} \cdot \cdots \cdot \frac{\lambda_{j, d}^{\alpha_{d}}}{\alpha_{d}!} .
\end{equation*}

For a $t$-dimensional ($0 \le t \le d - 1$) subsimplex $F$ of $K_{j}$ with vertices $V_{j, i_{0}}, V_{j, i_{1}}, \ldots, V_{j, i_{t}}$, let $\mathbb{I}_{j, F} := \{i_{0}, i_{1}, \ldots, i_{t}\}$ be the vertex index set, and denote $\mathbb{I}_{j, \setminus F} := \mathbb{I}_{d} \setminus \mathbb{I}_{j, F}$. If $F \in \mathcal{T}^{\partial}(K)$, i.e., $V_{A}$ is not a vertex of $F$, it holds that $\mathbb{I}_{j, F} = \mathbb{I}_{F}$ and $\mathbb{I}_{j, \setminus F} = \mathbb{I}_{\setminus F}$. Moreover, define the partial sums
\begin{equation*}
    |\bm{\alpha}|_{j, F} := \sum_{i \in \mathbb{I}_{j, F}} \alpha_{i}, \quad |\bm{\alpha}|_{j, \setminus F} := |\bm{\alpha}| - |\bm{\alpha}|_{j, F} = \sum_{i \in \mathbb{I}_{j, \setminus F}} \alpha_{i}.
\end{equation*}

These concepts can be generalized to any subsimplex $F$ of $K$. Specifically, for the $t$-dimensional $(1 \le t \le d - 1)$ subsimplex $F$ of $K$ with vertices $V_{i_{0}}, V_{i_{1}}, \ldots, V_{i_{t}}$, let $\lambda_{F, i_{0}}, \lambda_{F, i_{1}}, \ldots, \lambda_{F, i_{t}}$ be the barycentric coordinates with respect to $F$, corresponding to its vertices $V_{i_{0}}, V_{i_{1}}, \ldots, V_{i_{t}}$, respectively. Let $\mathbb{I}_{F} := \{i_{0}, i_{1}, \ldots, i_{t}\}$ be the vertex index set. Specifically, for $j = 0, 1, \ldots, d$, the $(d - 1)$-dimensional subsimplex with vertices $V_{i}$ for $i \in \mathbb{I}_{\setminus j}$ is denoted as $F_{\setminus j}$. 

If $F$ is a subsimplex of both $K$ and $K_{j}$, then for any $i \in \mathbb{I}_{F}$, it holds that
\begin{equation*}
    \lambda_{i}|_{F} = \lambda_{j, i}|_{F} = \lambda_{F, i}.
\end{equation*}
Therefore, on the subsimplex $F \in \mathcal{T}^{\partial}(K)$, we use the simplified notation $\lambda_{i}$ in place of $\lambda_{j, i}$ and $\lambda_{F, i}$, since these coordinates coincide upon restriction to $F$.

Define the normalized monomial as
\begin{equation*}
    \llbracket\bm{\lambda}\rrbracket_{F}^{\bm{\sigma}} := \prod_{i \in \mathbb{I}_{F}} \frac{\lambda_{i}^{\sigma_{i}}}{\sigma_{i}!} = \frac{\lambda_{i_{0}}^{\sigma_{i_{0}}}}{\sigma_{i_{0}}!} \cdot \frac{\lambda_{i_{1}}^{\sigma_{i_{1}}}}{\sigma_{i_{1}}!} \cdot \cdots \cdot \frac{\lambda_{i_{t}}^{\sigma_{i_{t}}}}{\sigma_{i_{t}}!},
\end{equation*}
where $\bm{\sigma} := (\sigma_{i_{0}}, \sigma_{i_{1}}, \ldots, \sigma_{i_{d}}) \in \mathbb{N}_{0}^{t + 1}$ is a multi-index. Moreover, define the sum of $\bm{\sigma}$ as
\begin{equation}
    \label{eq:sum-sigma-F}
    |\bm{\sigma}|_{F} := \sum_{i \in \mathbb{I}_{F}} \sigma_{i} = \sigma_{i_{0}} + \sigma_{i_{1}} + \cdots + \sigma_{i_{t}},
\end{equation}
and the multi-index set
\begin{equation*}
    \Sigma(\mathbb{I}_{F}, l) := \bigg\{\bm{\sigma} := (\sigma_{i_{0}}, \sigma_{i_{1}}, \ldots, \sigma_{i_{t}}) \in \mathbb{N}_{0}^{t + 1}: |\bm{\sigma}|_{F} := \sum_{i \in \mathbb{I}_{F}} \sigma_{i} = l\bigg\}.
\end{equation*}
For a subsimplex $E$ of $F$, let $\mathbb{I}_{E}$ be the vertex index set, and denote $\mathbb{I}_{F, \setminus E} := \mathbb{I}_{F} \setminus \mathbb{I}_{E}$. We additionally define the partial sums
\begin{equation*}
    |\bm{\sigma}|_{F, E} := \sum_{i \in \mathbb{I}_{E}} \sigma_{i}, \quad |\bm{\sigma}|_{F, \setminus E} := |\bm{\sigma}|_{F} - |\bm{\sigma}|_{F, E} = \sum_{i \in \mathbb{I}_{F, \setminus E}} \sigma_{i}.
\end{equation*}

\section{Main Construction}
\label{sec:construction}

In this section, we specify the degrees of freedom for the local shape function space $\mathcal S_{k}^{\bm{r},\rho}(\mathcal T_A(K))$ in \eqref{eq:shape-function}. To this end, we start by defining a specific projection operator. Afterwards, we also give explicit expressions for the dimension of the corresponding superspline space.

\begin{definition}[$Q$-Operator]
    \label{defi:Q-op}
    Given the polynomial degree $k$ and the split point continuity $\rho$ such that $\rho \le k - 1$, set the boundary layer number $b := k - \rho$. The linear projection operator $Q_{k}^{\rho}$ maps a polynomial in $\mathcal{P}_{\rho}(K)$ to the piecewise polynomial space 
    $$\big\{u \in L^{2}(K): u|_{K_{j}} \in \mathcal{P}_{k}(K_{j}) \text{ for each } j \in \mathbb{I}_{d}\big\}.$$ 
    For $p \in \mathcal P_{\rho}(K)$, and $j \in \mathbb I_d$, the detailed values $Q_{k}^{\rho}(p)$ on $K_j$ are determined as follows: $p$ has the following expansion (with respect to the barycentric coordinates of $K_j$) as 
    \begin{equation*}
        p = \sum_{|\bm \beta| = \rho} c_{\bm \beta} \llbracket\bm{\lambda}\rrbracket_{j}^{\bm{\beta}},
    \end{equation*}
    where the summation ranges over multi-indices $\bm{\beta} := (\beta_{0}, \beta_{1}, \ldots, \beta_{d}) \in \mathbb{N}_{0}^{d + 1}$ such that $|\bm{\beta}| := \sum_{i \in \mathbb{I}_{d}} \beta_{i} = \rho$. Then, we define the values $Q_{k}^{\rho}(p)$ in $K_j$ as 
    \begin{equation*}
        Q_{k}^{\rho}(p)\big|_{K_j} := \sum_{|\bm \beta| = \rho} c_{\bm \beta}\llbracket\bm{\lambda}\rrbracket_{j}^{\bm{\beta} + b \bm{e}_{j}},
    \end{equation*}
    where the multi-index $\bm \alpha := \bm{\beta} + b \bm{e}_{j} \in \Sigma(\mathbb{I}_{d}, k)$ is defined as $\alpha_{i} = \beta_{i} + b \cdot \delta_{j, i}$ for $i \in \mathbb{I}_{d}$, with Kronecker’s delta $\delta$. 
\end{definition}

In particular, for each normalized monomial with respect to $K_{j}$, it holds that 
\begin{equation}
    \label{eq:Q-k-b-operator}
    Q_{k}^{\rho}(\llbracket\bm{\lambda}\rrbracket_{j}^{\bm{\beta}})\big|_{K_{j}} = Q_{k}^{\rho} \Bigg(\frac{\lambda_{j, j}^{\beta_{j}}}{\beta_{j}!} \prod_{i \in \mathbb{I}_{\setminus j}} \frac{\lambda_{j, i}^{\beta_{i}}}{\beta_{i}!}\Bigg)\Bigg|_{K_{j}} = \frac{\lambda_{j, j}^{\beta_{j} + b}}{(\beta_{j} + b)!} \prod_{i \in \mathbb{I}_{\setminus j}} \frac{\lambda_{j, i}^{\beta_{i}}}{\beta_{i}!} = \llbracket\bm{\lambda}\rrbracket_{j}^{\bm{\beta} + b \bm e_j}.
\end{equation}
As we will see, $Q_{k}^{\rho}$ plays a crucial role in the construction of the degrees of freedom. Its properties will be studied in \Cref{sec:properties}. In particular, the range of $Q_{k}^{\rho}$ can be characterized; see \Cref{prop:Q-k-b-bijection}.
Note that a similar operator to $Q_{k}^{\rho}$ for the special case $d = 2$ is discussed in \cite{speleers2013a}. 

\begin{definition}[Local degrees of freedom]
    \label{def:local-dofs}
    Let $u \in \mathcal S_{k}^{\bm{r}, \rho}(\mathcal T_A(K))$. The set of (local) degrees of freedom for $u$ is divided into three subsets. 
    \begin{enumerate}
        \item For each vertex $V$ of $K$, the degrees of freedom at $V$ for $u$ are defined as follows: for each $0 \le n \le r_{d}$, and each $\bm{\theta} \in \Sigma(\mathbb{I}_{d, \setminus 0}, n)$,
        \begin{equation}
            \label{eq:set-dof-1}
            \bm{D}_{V}^{\bm{\theta}} u \big|_{V}, \text{ where }        \bm{D}_{V}^{\bm{\theta}} u := \frac{\partial^{n}}{\prod_{i = 1}^{d} \partial \bm{n}_{V, i}^{\theta_{i}}} u.
        \end{equation}
        Here $\bm{n}_{V, 1}, \ldots, \bm{n}_{V, d}$ are $d$ linearly independent vectors.
    
        \item For each $t$-dimensional $(1 \le t \le d - 1)$ subsimplex $F$ of $K$, let $\mathbb{I}_{F} := \{i_{0}, i_{1}, \ldots, i_{t}\}$, and the degrees of freedom on $F$ for $u$ are defined as follows: for each $0 \le n \le r_{d - t}$, each $\bm{\theta} \in \Sigma(\mathbb{I}_{d, \setminus t}, n)$, and each $\bm{\sigma} \in \Sigma_{0}^{\bm{q}_{t, n}}(\mathbb{I}_{F}, k - n)$,
        \begin{equation}
            \label{eq:set-dof-2}
            \frac{1}{|F|} \int_{F} (\bm{D}_{F}^{\bm{\theta}} u) \cdot \llbracket\bm{\lambda}\rrbracket_{F}^{\bm{\sigma}},
        \end{equation}
        where the continuity vector $\bm{q}_{t, n} := (r_{d - t + 1} - n, \ldots, r_{d} - n)$, the multi-index set $\Sigma^{\bm{q}}_{0}(\mathbb{I}_{F}, l)$ with $\bm{q} := (q_{1}, \ldots, q_{t})$ is defined as
        \begin{equation*}
            \begin{aligned}
                \Sigma^{\bm{q}}_{0}(\mathbb{I}_{F}, l) := \big\{\bm{\sigma} := (\sigma_{i_{0}}, \sigma_{i_{1}}, \ldots, \sigma_{i_{t}}) \in \mathbb{N}_{0}^{t + 1} : & \; |\bm{\sigma}|_{F} = l, \text{ and } |\bm{\sigma}|_{F, \setminus E} \ge q_{t - t'} + 1, \\
                & \; \text{for each } t' \text{-dimensional } (0 \le t' \le t - 1) \\
                & \; \text{subsimplex } E \text{ of } F\big\},
            \end{aligned}
        \end{equation*}
        and the $n$-th order normal derivative $\bm{D}_{F}^{\bm{\theta}}$ is defined as
        \begin{equation*}
            \bm{D}_{F}^{\bm{\theta}} u := \frac{\partial^{n}}{\prod_{i = t + 1}^{d} \partial \bm{n}_{F, i}^{\theta_{i}}} u.
        \end{equation*}
        Here $\bm{n}_{F, t + 1}, \ldots, \bm{n}_{F, d}$ are $(d - t)$ linearly independent normal vectors of $F$. When $t = d - 1$, the shortened notation $\bm{n}_{F}$ is also used.
        
        \item For the $d$-dimensional simplex $K$, the degrees of freedom on $K$ for $u$ are defined as follows: for each $\bm{\beta} \in \Sigma_{0}^{\bm{r}^{\circ}}(\mathbb{I}_{d}, \rho)$,
        \begin{equation}
        \label{eq:set-dof-3}
            \frac{1}{|K|} \int_{K} u \cdot Q_{k}^{\rho}(\llbracket\bm{\lambda}\rrbracket^{\bm{\beta}}),
        \end{equation}
        where the continuity vector $\bm{r}^{\circ} := (r^{\circ}_{1}, \ldots, r^{\circ}_{d}) = (r_{1} - b, \ldots, r_{d} - b)$, and the multi-index set $\Sigma_{0}^{\bm{r}^{\circ}}(\mathbb{I}_{d}, \rho)$ is defined as
        \begin{equation}
            \label{eq:Sigma-0-d}
            \begin{aligned}
                \Sigma^{\bm{r}^{\circ}}_{0}(\mathbb{I}_{d}, \rho) := \big\{\bm{\beta} := (\beta_{0}, \beta_{1}, \ldots, \beta_{d}) \in \mathbb{N}_{0}^{d + 1} : & \; |\bm{\beta}| = \rho, \text{ and } |\bm{\beta}|_{\setminus F} \ge r^{\circ}_{d - t} + 1, \\
                & \; \text{for each } t \text{-dimensional } (0 \le t \le d - 1) \\
                & \; \text{subsimplex } F \text{ of } K\big\}.
            \end{aligned}
        \end{equation}
    \end{enumerate}
\end{definition}

This brings us to the main theorem of this paper.

\begin{theorem}
    \label{thm:Cr-macro-element}
    Suppose that the continuity vector $\bm{r} := (r_{1}, \ldots, r_{d}) \in \mathbb{N}_{0}^{d}$, the split point continuity $\rho \in \mathbb{N}_{0}$, the polynomial degree $k \in \mathbb{N}_{0}$, and the boundary layer number $b \in \mathbb{N}_{0}$ jointly satisfy \Cref{asm:Cr-macro-element}. 
    Then, the $d$-dimensional simplex $K$, the shape function space $\mathcal{S}_{k}^{\bm{r}, \rho}(\mathcal{T}_{A}(K))$ defined in \eqref{eq:shape-function}, and the set of degrees of freedom defined in \eqref{eq:set-dof-1}--\eqref{eq:set-dof-3} form a finite element. The resulting finite element space over the triangulation $\mathcal{T}(\Omega)$ of the $d$-dimensional domain $\Omega$ is the superspline space $\mathcal{S}_{k}^{\bm{r}, \rho}(\mathcal{T}_{A}(\Omega))$ defined in \eqref{eq:superspline}, which is of $C^{r_{1}}$ continuity. 
\end{theorem}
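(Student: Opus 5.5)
The proof has two parts: \emph{unisolvence} on a single macro-element $K$ and \emph{continuity} of the assembled space. In both, I will compare the Alfeld-split element with the unsplit construction of \cite{hu2024construction}.

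For unisolvence, I first check that the number of degrees of freedom in \eqref{eq:set-dof-1}--\eqref{eq:set-dof-3} equals $\dim \mathcal{S}_{k}^{\bm{r},\rho}(\mathcal{T}_A(K))$. It is then enough to show that $u\in\mathcal{S}_{k}^{\bm{r},\rho}(\mathcal{T}_A(K))$ with all degrees of freedom zero must vanish. To organize the count and the vanishing argument, I expand $u|_{K_j}$ in the Bernstein basis $\llbracket\bm{\lambda}\rrbracket_j^{\bm\alpha}$, $|\bm\alpha|=k$, and split the indices into two groups:
\begin{itemize}
\item a \emph{boundary layer}, with $\alpha_j<b$, i.e.\ the coefficients within distance $b-1$ of the face $F_{\setminus j}$;
\item an \emph{interior part}, with $\alpha_j\ge b$. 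By \eqref{eq:Q-k-b-operator}, this part is exactly the range of $Q_k^\rho$ applied to polynomials of degree $\rho$, once the smoothness conditions at $V_A$ are accounted for.
\end{itemize}
This is the pair of intrinsic decompositions promised in \Cref{sec:decomposition}. With it, the dimension splits into the boundary contribution and the contribution of $\Sigma_0^{\bm{r}^\circ}(\mathbb{I}_d,\rho)$.

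The vanishing argument then runs in order of increasing subsimplex dimension. The vertex and subsimplex degrees of freedom \eqref{eq:set-dof-1}--\eqref{eq:set-dof-2} involve only traces and normal derivatives on $\partial K$. They therefore determine the normal derivatives of order $\le r_1$ on each facet and the $r_{d-t}$-jets on lower subsimplices, in the same way as in \cite{hu2024construction}. The resulting vanishing of Bernstein coefficients near $\partial K$ reaches depth $r_{1}$ on the facets. It goes deeper near lower-dimensional faces, according to the sets $\Sigma_0^{\bm q_{t,n}}$. Since $b\le r_2-r_1+1$ and $r_1\ge\lceil (3r_1-1)/2\rceil - \dots$, these boundary conditions combine with $C^{\rho}$ supersmoothness at $V_A$ and $C^{r_1}$ smoothness across the interior facets $\{V_A\}\cup F$. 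The combination forces the boundary layer $\alpha_j<b$ to vanish entirely.

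At that point $u=Q_k^\rho(p)$ for some $p\in\mathcal{P}_\rho(K)$, using the characterization of the range in \Cref{prop:Q-k-b-bijection}. The remaining boundary smoothness translates into $p$ vanishing to order $r^\circ_{d-t}=r_{d-t}-b$ on each $t$-subsimplex. That puts $p$ in the span of $\llbracket\bm{\lambda}\rrbracket^{\bm\beta}$ with $\bm\beta\in\Sigma_0^{\bm r^\circ}(\mathbb{I}_d,\rho)$. The interior degrees of freedom \eqref{eq:set-dof-3} then define a Gram-type pairing $(p,q)\mapsto\int_K Q_k^\rho(p)Q_k^\rho(q)$. I will show this pairing is positive definite on that span, for instance because $Q_k^\rho(p)$ is a nonzero piecewise polynomial whenever $p\neq0$. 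Hence $p=0$.

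The step I expect to be the main obstacle is the boundary-layer step: proving that $C^{r_1}$ smoothness across the interior facets, together with $C^\rho$ at $V_A$ and the vanishing near $\partial K$, kills every coefficient with $\alpha_j<b$. This is where the inequalities $2r_1-r_2\le b\le r_2-r_1+1$ must enter. I would attack it with the smoothness conditions of \cite{lai2007} across a shared facet of $K_j$ and $K_{j'}$. Working ring by ring in $\alpha_j=0,\dots,b-1$, I would show that the coefficient layers are determined by those of neighbouring subsimplices, using the transformation \eqref{eq:transformation}.

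Continuity of the global space then follows as in \cite{hu2024construction}. On each $t$-subsimplex $F$ of $\mathcal{T}(\Omega)$, the restricted jets of order $\le r_{d-t}$ are determined solely by the degrees of freedom attached to $F$ and its subsimplices, which are shared by neighbouring elements. This gives $C^{r_1}$ conformity and identifies the global space with \eqref{eq:superspline}.
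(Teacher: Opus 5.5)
\textbf{The proposal proves only half of unisolvence: injectivity. The dimension (existence) half is asserted without argument, and that half is where the paper's real work lies.}

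\textbf{Your injectivity argument is correct.} The step you call the main obstacle is not one. Once all of \eqref{eq:set-dof-1}--\eqref{eq:set-dof-2} vanish, the trace $\partial^n u/\partial\bm{n}_F^n|_F\in\mathcal{P}_{k-n}(F)$ on each facet is zero for every $n\le r_1$. This follows from unisolvence of the element of \cite{hu2024construction} on $F$; it is \Cref{prop:continuity} with $u_-=0$. So $u|_{K_j}$ vanishes to order $r_1\ge b$ on $F_{\setminus j}$, and the layer $\alpha_j<b$ is zero immediately. No appeal to smoothness across the interior facets is needed. Your remaining steps are fine: $u=Q_k^\rho(p)$, $p$ vanishing to order $r^\circ_{d-t}$ on $t$-faces, and $\int_K Q_k^\rho(p)^2=0$.

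\textbf{What this proves, and what it does not.} It gives only $\dim\mathcal{S}_k^{\bm{r},\rho}(\mathcal{T}_A(K))\le$ (number of degrees of freedom). You dispose of the reverse inequality in one sentence (``I first check that the number of degrees of freedom equals $\dim\mathcal{S}$''), but it is the real content of the theorem. Splitting Bernstein coefficients into $\alpha_j<b$ and $\alpha_j\ge b$ does not compute the dimension of a space cut out by $C^{r_1}$ conditions across interior facets and supersmoothness at $V_A$. Without this half you do not have a finite element in Ciarlet's sense. Nor do you have the local functions with a single nonzero degree of freedom that the global dual basis requires.

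\textbf{What is needed.} You must exhibit as many linearly independent elements of $\mathcal{S}_k^{\bm{r},\rho}(\mathcal{T}_A(K))$ as there are degrees of freedom. Polynomials cannot do this. The facet functionals of order $b\le n\le r_1$ are linearly dependent on $\mathcal{P}_k(K)$. Roughly, normal-derivative traces of order up to $r_1$ on two facets sharing a $(d-2)$-face must be compatible there up to order $2r_1$, while only the $r_2$-jet, with $r_2<2r_1$, is prescribed.

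\textbf{How the paper does it.}
\begin{enumerate}
\item It uses genuinely piecewise functions $Q_k^\rho(\llbracket\bm{\lambda}\rrbracket^{\bm{\beta}})$. These are taken not only for the bubbles $\bm{\beta}\in\Sigma_0^{\bm{r}^\circ}(\mathbb{I}_d,\rho)$ but also for $\bm{\beta}\in\Sigma^{\bm{r}^\circ}_{F_{\setminus j},n-b}(\mathbb{I}_d,\rho)$ with $b\le n\le r_1$.
\item It proves these functions lie in the shape function space (\Cref{prop:subspace-Q-S}). Vanishing to order $r_2$ on the $(d-2)$-face opposite $V_A$, together with $C^\rho$ at $V_A$, gives smoothness of order $r_2-b+1\ge r_1$ across each interior facet. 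This is exactly where $b\le r_2-r_1+1$ is used.
\item It proves the functionals $\psi_{\bm{\beta}}$ are block-triangular on these functions (\Cref{coro:Q-beta-beta}). This needs $b\ge 2r_1-r_2$, as does the existence of the refined intrinsic decomposition for $(\bm{r}^\circ,\rho)$.
\item Together with $\mathcal{P}^\partial$, this yields $\card(\Sigma^\partial)+\card(\Sigma^\circ)$ independent functions, matching the number of degrees of freedom.
\end{enumerate}

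Your argument never uses $2r_1-r_2\le b\le r_2-r_1+1$ beyond $b\le r_1$. That is telling: those hypotheses are needed only for the existence half you skipped.
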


The proof strategy is as follows. First, we will introduce a pair of \emph{refined intrinsic decompositions}, which build a relationship between multi-index sets of type $\Sigma_{0}$ and $\Sigma_{F, n}$. Based on these refined intrinsic decompositions, we can rewrite the degrees of freedom in an equivalent form. This is done in \Cref{sec:decomposition}. Next, we will investigate several properties of $Q_{k}^{\rho}$ in \Cref{sec:properties}. The unisolvence and the continuity will be shown in \Cref{sec:unisolvence,sec:continuity}, respectively. 

Finally, we discuss the dimension of the superspline space $\mathcal{S}_{k}^{\bm{r}, \rho}(\mathcal{T}_{A}(\Omega))$ defined in \eqref{eq:superspline} and the dimension of the shape function space $\mathcal{S}_{k}^{\bm{r}, \rho}(\mathcal{T}_{A}(K))$ defined in \eqref{eq:shape-function}. To this end, we introduce the numbers $B_{t, n}$ for all $0 \le t \le d - 1$ and $0 \le n \le r_{d - t}$, which are defined as
\begin{equation}
    \label{eq:s-t-n}
    B_{0, n} := 1, \qquad B_{t, n} := \card \big(\Sigma_{0}^{\bm{q}_{t, n}}(\mathbb{I}_{t}, k - n)\big), \quad \forall~t,\ 1 \le t \le d - 1,
\end{equation}
where the continuity vector $\bm{q}_{t, n} := (r_{d - t + 1} - n, \ldots, r_{d} - n)$. These numbers depend only on $\bm{r}$ and $k$, and can be computed recursively by
\begin{equation*}
    B_{t, n} = \binom{k - n + t}{t} - \sum_{t' = 0}^{t - 1} \binom{t + 1}{t' + 1} \sum_{n' = n}^{r_{d - t'}} \binom{n' - n + (t - t' -1)}{t - t' - 1} B_{t', n'},
\end{equation*}
which will be proved in \Cref{lem:number-B-t-n}. The dimension of the space $\mathcal{S}_{k}^{\bm{r}, \rho}(\mathcal{T}_{A}(\Omega))$ can be characterized as follows.

\begin{theorem}
    \label{thm:dim-superspline}
    Suppose that the continuity vector $\bm{r} := (r_{1}, \ldots, r_{d}) \in \mathbb{N}_{0}^{d}$, the split point continuity $\rho \in \mathbb{N}_{0}$, the polynomial degree $k \in \mathbb{N}_{0}$, and the boundary layer number $b \in \mathbb{N}_{0}$ jointly satisfy \Cref{asm:Cr-macro-element}. The dimension of the superspline space $\mathcal{S}_{k}^{\bm{r}, \rho}(\mathcal{T}_{A}(\Omega))$ defined in \eqref{eq:superspline} equals
    \begin{equation*}
        \begin{aligned}
            \dim \mathcal{S}_{k}^{\bm{r}, \rho}(\mathcal{T}_{A}(\Omega)) & = \binom{\rho + d}{d} N_{d} + \sum_{t = 0}^{d - 1} N_{t} \sum_{n = 0}^{r_{d - t}} \binom{n + (d - t - 1)}{d - t - 1} B_{t, n} \\
            & \quad \, \, - \sum_{t = 0}^{d - 1} \binom{d + 1}{t + 1} N_{d} \sum_{n = b}^{r_{d - t}} \binom{n - b + (d - t - 1)}{d - t - 1} B_{t, n} \\
            & = \binom{k + d}{d} N_{d} + \sum_{t = 0}^{d - 1} \bigg(N_{t} - \binom{d}{t + 1} N_{d}\bigg) \sum_{n = 0}^{r_{d - t}} \binom{n + (d - t - 1)}{d - t - 1} B_{t, n} \\
            & \quad \, \, - \sum_{t = 0}^{d - 1} \binom{d}{t} N_{d} \sum_{n = b}^{r_{d - t}} \binom{n - b + (d - t - 1)}{d - t - 1} B_{t, n},
        \end{aligned}
    \end{equation*}
    where $N_{t}$ is the number of $t$-dimensional $(0 \le t \le d)$ simplices present in $\mathcal{T}(\Omega)$ and $B_{t, n}$ is defined in \eqref{eq:s-t-n}.
\end{theorem}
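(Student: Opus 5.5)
The dimension follows by counting global degrees of freedom, taking \Cref{thm:Cr-macro-element} as given. Since the local degrees of freedom \eqref{eq:set-dof-1}--\eqref{eq:set-dof-3} are unisolvent for $\mathcal{S}_{k}^{\bm{r}, \rho}(\mathcal{T}_{A}(K))$ and the assembled space is exactly $\mathcal{S}_{k}^{\bm{r}, \rho}(\mathcal{T}_{A}(\Omega))$, the dimension is the number of distinct global degrees of freedom. The degrees of freedom attached to a $t$-dimensional subsimplex $F$ ($0\le t\le d-1$) are shared by all elements containing $F$, because the normal derivative directions $\bm n_{F,i}$ are chosen globally. So each such $F$ contributes once. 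The count per $F$ is $\sum_{n=0}^{r_{d-t}} \card\Sigma(\mathbb{I}_{d,\setminus t},n)\cdot \card\Sigma_0^{\bm q_{t,n}}(\mathbb{I}_F,k-n)$. Since $\card\Sigma(\mathbb{I}_{d,\setminus t},n)=\binom{n+d-t-1}{d-t-1}$ and the second factor is $B_{t,n}$ by \eqref{eq:s-t-n} (with $B_{0,n}=1$ matching \eqref{eq:set-dof-1}), this gives the middle sum in the first expression.

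The interior count is $\card\Sigma_0^{\bm r^\circ}(\mathbb{I}_d,\rho)$ per $d$-simplex. I will compute it by complementary counting. For $\bm\beta\in\Sigma(\mathbb{I}_d,\rho)$ failing the condition, take the (unique, by the intrinsic decomposition) lowest-dimensional subsimplex $F$ with $|\bm\beta|_{\setminus F}\le r^\circ_{d-t}$. The key step is to show that $\Sigma(\mathbb{I}_d,\rho)$ splits disjointly as $\Sigma_0^{\bm r^\circ}(\mathbb{I}_d,\rho)$ together with, for each $t$-face $F$ of $K$ and each $0\le n'\le r^\circ_{d-t}$, the set of $\bm\beta$ with $|\bm\beta|_{\setminus F}=n'$ and the restriction $\bm\beta|_F\in\Sigma_0^{\bm q'}(\mathbb{I}_F,\rho-n')$. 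Here $\bm q'$ is the continuity vector built from $r^\circ$ shifted by $n'$.

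Substituting $n=n'+b$ gives $\rho-n'=k-n$ and $r^\circ_s-n'=r_s-n$, so the face part is exactly $\Sigma_0^{\bm q_{t,n}}(\mathbb{I}_F,k-n)$ with cardinality $B_{t,n}$. The normal part contributes $\binom{n-b+d-t-1}{d-t-1}$. With $\binom{d+1}{t+1}$ faces of dimension $t$, we get
$\card\Sigma_0^{\bm r^\circ}(\mathbb{I}_d,\rho)=\binom{\rho+d}{d}-\sum_{t}\binom{d+1}{t+1}\sum_{n=b}^{r_{d-t}}\binom{n-b+d-t-1}{d-t-1}B_{t,n}$.
This is precisely the refined intrinsic decomposition of \Cref{sec:decomposition} applied with the shifted vector $\bm r^\circ$. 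Its hypotheses (a monotonicity of the type $2r^\circ_s\le r^\circ_{s+1}$, or the weaker analogue used there) must be checked from \Cref{asm:Cr-macro-element}, in particular from $b\ge 2r_1-r_2$. I expect this verification, together with the bookkeeping of the uniqueness of $F$, to be the main obstacle. If the decomposition lemma is not directly available for $\bm r^\circ$, I would reprove it: define $F$ as the intersection of all faces violating the constraint and use $r^\circ_{s}+r^\circ_{s'}<r^\circ_{s+s'}+1$-type inequalities to show that the intersection again violates it. Summing the three contributions gives the first formula.

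For the second formula I would apply the same decomposition with $\bm r$ and degree $k$ on $K$ itself (the unrefined intrinsic decomposition). This gives $\binom{k+d}{d}=\sum_{t}\binom{d+1}{t+1}\sum_{n=0}^{r_{d-t}}\binom{n+d-t-1}{d-t-1}B_{t,n}+\card\Sigma_0^{\bm r}(\mathbb{I}_d,k)$. Alternatively, and more simply, substitute $\binom{\rho+d}{d}=\binom{k+d}{d}-\sum_{n=0}^{b-1}\binom{n+d-1}{d-1}\cdot(\ldots)$. Concretely, I would write $\binom{d+1}{t+1}=\binom{d}{t+1}+\binom{d}{t}$ and show that
$\binom{\rho+d}{d}-\binom{k+d}{d}=-\sum_{t}\binom{d}{t+1}\sum_{n=0}^{r_{d-t}}\binom{n+d-t-1}{d-t-1}B_{t,n}$.
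This identity is the decomposition of the degree-$k$ polynomials in $d$ variables vanishing to order $b$, i.e.\ the layers $n<b$, over faces of the facet opposite $V_A$. It is an instance of the same counting lemma (\Cref{lem:number-B-t-n}) applied on a $(d-1)$-simplex with $d$ vertices, hence the $\binom{d}{t+1}$ count. Rearranging then yields the second expression.
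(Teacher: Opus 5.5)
\textbf{Verdict.} Your derivation of the first expression is sound and follows the paper's own route. The dimension is the number of global functionals; each $t$-face contributes $\sum_{n}\binom{n+d-t-1}{d-t-1}B_{t,n}$; and the interior count comes from the refined intrinsic decomposition of $\Sigma(\mathbb{I}_d,\rho)$ with respect to $\bm r^{\circ}$ after the shift $n=n'+b$. The nontrivial inequalities to check there are $2r_1-r_2\le b\le r_1$. The gap is in the second expression.

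\textbf{The identity you propose for the second expression is false.} Abbreviate $S_t:=\sum_{n=0}^{r_{d-t}}\binom{n+d-t-1}{d-t-1}B_{t,n}$ and $T_t:=\sum_{n=b}^{r_{d-t}}\binom{n-b+d-t-1}{d-t-1}B_{t,n}$. You propose to prove $\binom{k+d}{d}-\binom{\rho+d}{d}=\sum_{t}\binom{d}{t+1}S_t$. The correct identity (\Cref{coro:b-boundary}) is
\[
\binom{k+d}{d}-\binom{\rho+d}{d}=\sum_{t=0}^{d-1}\binom{d}{t+1}\bigl(S_t-T_t\bigr).
\]
With your version, inserting it into the first expression and splitting $\binom{d+1}{t+1}=\binom{d}{t+1}+\binom{d}{t}$ leaves an uncancelled $-N_d\sum_t\binom{d}{t+1}T_t$. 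So the rearrangement does not produce the second expression.

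A concrete check is the $C^1$ Clough--Tocher case: $d=2$, $\bm r=(1,1)$, $k=3$, $b=1$, $\rho=2$. Then $B_{1,0}=0$, $B_{1,1}=1$, $S_0=3$, $S_1=1$ and $T_0=T_1=1$. The left side is $10-6=4$, your right side is $7$, and the correct right side is $7-3=4$.

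\textbf{How to repair it.} Your verbal justification (the layers $n<b$) is the right idea, but your formula drops the truncation at $b$. Carried out correctly, it is the paper's proof:
\begin{enumerate}
\item Write $\binom{k+d}{d}-\binom{\rho+d}{d}=\sum_{n=0}^{b-1}\binom{k-n+d-1}{d-1}$.
\item Expand each layer by \Cref{lem:number-B-t-n} with $t=d-1$.
\item Swap the sums over $0\le n\le b-1$ and $n'\ge n$.
\item Apply the hockey-stick identity. This gives $\binom{n'+d-t-1}{d-t-1}$, minus $\binom{n'-b+d-t-1}{d-t-1}$ when $n'\ge b$.
\end{enumerate}
That subtracted piece is exactly $T_t$.

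\textbf{Your other route does not apply.} The intrinsic decomposition of $\Sigma(\mathbb{I}_d,k)$ with respect to $\bm r$ itself is not available. The pair $(\bm r,k)$ violates \Cref{asm:Cr-element} whenever $r_2\le 2r_1-1$, which is precisely the regime of \Cref{asm:Cr-macro-element}. In the example above, all three edge sets $\Sigma^{\bm r}_{F,1}(\mathbb{I}_2,3)$ equal $\{(1,1,1)\}$. So the union is not disjoint, and the count $9+3=12$ exceeds $\binom{5}{2}=10$. This is the phenomenon the paper points to when it notes that the face functionals are linearly dependent on $\mathcal{P}_k(K)$. It is why the correction has to go through the $b$-layer computation rather than a degree-$k$ decomposition with respect to $\bm r$.
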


The proof of \Cref{thm:dim-superspline} is provided in \Cref{sec:dimension}. For the special case $\Omega = K$ (i.e., a single $d$-dimensional simplex), the number of $t$-dimensional ($0 \le t \le d$) simplices present in $K$ is
\begin{equation*}
    N_{t} = \binom{d + 1}{t + 1},
\end{equation*}
and the expression for the shape function space $\mathcal{S}_{k}^{\bm{r}, \rho}(\mathcal{T}_{A}(K))$ follows; see \eqref{eq:dim-shape}.

\begin{table}[htbp]
    \centering
    \begin{tabular}{c|c|c|c|c}
        \hline
        \multicolumn{5}{c}{$d = 2$} \\
        \hline
        $r_{1}$ & $r_{2} = \lceil\frac{3 r_{1} - 1}{2}\rceil$ & $\rho = 3 r_{2} - 2 r_{1} + 1$ & $k = 2 r_{2} + 1$ & $b = 2 r_{1} - r_{2}$ \\
        \hline
        $2 m - 1$ & $3 m - 2$ & $5 m - 3$ & $6 m - 3$ & $m$ \\
        \hline
        $2 m$ & $3 m$ & $5 m + 1$ & $6 m + 1$ & $m$ \\
        \hline
    \end{tabular}
    \\[10pt]
    \begin{tabular}{c|c|c|c|c|c}
        \hline
        \multicolumn{6}{c}{$d = 3$} \\
        \hline
        $r_{1}$ & $r_{2} = \lceil\frac{3 r_{1} - 1}{2}\rceil$ & $r_{3} = 2 r_{2}$ & $\rho = 5 r_{2} - 2 r_{1} + 1$ & $k = 4 r_{2} + 1$ & $b = 2 r_{1} - r_{2}$ \\
        \hline
        $2 m - 1$ & $3 m - 2$ & $6 m - 4$ & $11 m - 7$ & $12 m - 7$ & $m$ \\
        \hline
        $2 m$ & $3 m$ & $6 m$ & $11 m + 1$ & $12 m + 1$ & $m$ \\
        \hline
    \end{tabular}
    \caption{Choices of $\bm{r} := (r_{1}, \ldots, r_{d})$, $\rho$, $k$, and $b$, which minimize the dimension of $\mathcal{S}_{k}^{\bm{r}, \rho}(\mathcal{T}_{A}(K))$ for a given value of $r_1$ in the cases $d=2$ (\Cref{ex:dimension-2d}) and $d=3$ (\Cref{ex:dimension-3d}).}
    \label{tab:minimum-degree-dim-2-3}
\end{table}

\begin{example}[Dimension in the two-dimensional case] 
\label{ex:dimension-2d}
    Consider the case $d = 2$, and choose $\bm{r}$, $\rho$, $k$, and $b$ as in \Cref{tab:minimum-degree-dim-2-3}, which minimize the dimension of $\mathcal{S}_{k}^{\bm{r}, \rho}(\mathcal{T}_{A}(K))$, according to the rule in \Cref{tab:minimum-degree}.
    Then, it holds that $B_{1, n} = n$ and
    \begin{equation*}
        \dim \mathcal{S}_{k}^{\bm{r}, \rho}(\mathcal{T}_{A}(\Omega)) = \begin{dcases}
            \frac{9 m^{2} - 3 m}{2} N_{0} + (2 m^{2} - m) N_{1} + (2 m^{2} - 3 m + 1) N_{2}, & r_{1} = 2 m - 1, \\
            \frac{9 m^{2} + 9 m + 2}{2} N_{0} + (2 m^{2} + m) N_{1} + (2 m^{2} - m) N_{2}, & r_{1} = 2 m. \\
        \end{dcases}
    \end{equation*}
    Note that the dimension can be expressed alternatively as
    \begin{equation*}
        \dim \mathcal{S}_{k}^{\bm{r}, \rho}(\mathcal{T}_{A}(\Omega)) = \binom{r_2+2}{2} N_{0} + \binom{r_1+1}{2} N_{1} + \binom{r_1}{2} N_{2}.
    \end{equation*}
    This is in complete agreement with the dimension results from \cite{lai2001} (see also \cite{lai2007}).
\end{example}

\begin{example}[Dimension in the three-dimensional case]
\label{ex:dimension-3d}
    Consider the case $d = 3$, and choose $\bm{r}$, $\rho$, $k$, and $b$ as in \Cref{tab:minimum-degree-dim-2-3}, which minimize the dimension of $\mathcal{S}_{k}^{\bm{r}, \rho}(\mathcal{T}_{A}(K))$, according to the rule in \Cref{tab:minimum-degree}.
    Then, it holds that $B_{1, n} = n$, $B_{2, n} = \binom{r_{2}}{2} + 2 r_{2} n + \binom{n + 1}{2}$ and
    \begin{equation*}
        \dim \mathcal{S}_{k}^{\bm{r}, \rho}(\mathcal{T}_{A}(\Omega)) = \begin{dcases}
            (36 m^{3} - 36 m^{2} + 11 m - 1) N_{0} + (9 m^{3} - 9 m^{2} + 2 m) N_{1} \\
            \quad + \frac{134 m^{3} - 174 m^{2} + 58 m}{6} N_{2} \\
            \quad + \frac{311 m^{3} - 639 m^{2} + 430 m - 96}{6} N_{3}, & r_{1} = 2 m - 1, \\
            (36 m^{3} + 36 m^{2} + 11 m + 1) N_{0} + (9 m^{3} + 9 m^{2} + 2 m) N_{1} \\
            \quad + \frac{134 m^{3} + 57 m^{2} - 5 m}{6} N_{2} \\
            \quad + \frac{311 m^{3} - 99 m^{2} - 2 m}{6} N_{3}, & r_{1} = 2 m. \\
        \end{dcases}
    \end{equation*}
    Note that the dimension can be expressed alternatively as
    \begin{equation*}
        \begin{aligned}
            \dim \mathcal{S}_{k}^{\bm{r}, \rho}(\mathcal{T}_{A}(\Omega)) = \; & \binom{2 r_{2} + 3}{3} N_{0} + 2 \binom{r_{2} + 2}{3} N_{1} \\
            & + \bigg[\binom{r_{2}}{2} (r_{1} + 1) + 2 r_{2} \binom{r_{1} + 1}{2} + \binom{r_{1} + 2}{3}\bigg] N_{2} \\
            & + \bigg[\binom{r_{2} + 2 r_{1}}{3} - 4 \binom{r_{1}}{3}\bigg] N_{3}.
        \end{aligned}
    \end{equation*}
    This is in complete agreement with the dimension results from \cite{lai2007} for $r = 1,2$ and \cite{lai2013} for arbitrary $r \ge 1$.
\end{example}

\section{Pair of Refined Intrinsic Decompositions}
\label{sec:decomposition}

In this section, we propose a pair of refined intrinsic decompositions for the Alfeld split. The notion of refined intrinsic decomposition was introduced in \cite{hu2024construction}, which provides a correspondence between a set of degrees of freedom and a multi-index set. 

It should be emphasized that the degrees of freedom \eqref{eq:set-dof-2} are linearly dependent as linear functionals on $\mathcal{P}_{k}(K)$, which constitutes the main difficulty in proving the unisolvence of the degrees of freedom \eqref{eq:set-dof-1}--\eqref{eq:set-dof-3}. To overcome this difficulty, we separate the degrees of freedom in two groups, each of which corresponds to a refined intrinsic decomposition. 

The first group consists of the degrees of freedom in \eqref{eq:set-dof-1} and the majority of those in \eqref{eq:set-dof-2}, which are linearly independent as linear functionals on $\mathcal{P}_{k}(K)$ and correspond to \Cref{decomp:A}. The second group consists of the linearly dependent part of \eqref{eq:set-dof-2} together with the bubble function degrees of freedom in \eqref{eq:set-dof-3}, which are linearly independent as linear functionals on $Q_{k}^{\rho}(\mathcal{P}_{\rho}(K))$ and correspond to \Cref{decomp:B}.

To compute such decompositions, we first recall the assumptions on the index set in \cite{hu2024construction}. In this section, we always assume $d \ge 2$.

\begin{assumption}
    \label{asm:Cr-element}
    For the continuity vector $\bar{\bm{r}} := (\bar{r}_{1}, \bar{r}_{2}, \ldots, \bar{r}_{d}) \in \mathbb{N}_{0}^{d}$ and the polynomial degree $\bar{k} \in \mathbb{N}_{0}$, we say $\bar{\bm{r}}$ and $\bar{k}$ satisfy \Cref{asm:Cr-element} if 
    \begin{equation*}
        2^{d - 1} \bar{r}_{1} \le 2^{d - 2} \bar{r}_{2} \le \cdots \le \bar{r}_{d}, \quad 2 \bar{r}_{d} + 1 \le \bar{k}.
    \end{equation*}
\end{assumption}

Given $\bar{\bm{r}}$ and $\bar{k}$, for each $t$-dimensional ($0 \le t \le d - 1$) subsimplex $F$ of the $d$-dimensional simplex $K$, and each $n$ such that $0 \le n \le \bar{r}_{d - t}$, define the multi-index set $\Sigma^{\bar{\bm{r}}}_{F, n}(\mathbb{I}_{d}, \bar{k})$ as
\begin{equation*}
    \begin{aligned}
        \Sigma^{\bar{\bm{r}}}_{F, n}(\mathbb{I}_{d}, \bar{k}) := \big\{\bm{\alpha} \in \Sigma(\mathbb{I}_{d}, \bar{k}): & \; |\bm{\alpha}|_{\setminus F} = n, \text{ and }|\bm{\alpha}|_{\setminus E} \ge \bar{r}_{d - t'} + 1 \\
        & \; \text{for each } t' \text{-dimensional } (0 \le t' \le t - 1) \\
        & \; \text{subsimplex } E \text{ of } K\big\}.
    \end{aligned}
\end{equation*}

\begin{proposition}[Refined intrinsic decomposition corresponding to $\bar{\bm{r}}$ \cite{hu2024construction}]
    Suppose that the continuity vector $\bar{\bm{r}} := (\bar{r}_{1}, \ldots, \bar{r}_{d}) \in \mathbb{N}_{0}^{d}$ and the polynomial degree $\bar{k} \in \mathbb{N}_{0}$ jointly satisfy \Cref{asm:Cr-element}. 
    Then, the refined intrinsic decomposition of $\Sigma(\mathbb{I}_{d}, \bar{k})$ corresponding to $\bar{\bm{r}}$ is given by the following disjoint union 
    \begin{equation}\label{eq:decompose-full}
        \Sigma(\mathbb{I}_{d}, \bar{k}) = \Sigma_{0}^{\bar{\bm{r}}}(\mathbb{I}_{d}, \bar{k}) \cup \Bigg(\bigcup_{F \in \mathcal{T}^{\partial}(K)} \bigcup_{n = 0}^{\bar{r}_{d - t_{F}}} \Sigma_{F, n}^{\bar{\bm{r}}}(\mathbb{I}_{d}, \bar{k})\Bigg),
    \end{equation}
    where $\mathcal{T}^{\partial}(K)$ is the collection of all the $t$-dimensional ($0 \le t \le d - 1$) subsimplices $F$ of $K$, and $t_F$ is the dimension of $F$. 
\end{proposition}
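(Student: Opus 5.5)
The plan is to attach to each multi-index $\bm{\alpha} \in \Sigma(\mathbb{I}_{d}, \bar{k})$ a canonical subsimplex (or none) and to show that this assignment is exactly the decomposition \eqref{eq:decompose-full}.

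For a fixed $\bm{\alpha}$, call a subsimplex $F \in \mathcal{T}^{\partial}(K)$ of dimension $t_F$ \emph{active} if
\begin{equation*}
    |\bm{\alpha}|_{\setminus F} \le \bar{r}_{d - t_F}.
\end{equation*}
By definition, $\bm{\alpha} \in \Sigma_{0}^{\bar{\bm{r}}}(\mathbb{I}_{d}, \bar{k})$ exactly when no subsimplex is active. Likewise, $\bm{\alpha} \in \Sigma^{\bar{\bm{r}}}_{F, n}(\mathbb{I}_{d}, \bar{k})$ exactly when $F$ is active with $n = |\bm{\alpha}|_{\setminus F}$ and no subsimplex of dimension $< t_F$ is active. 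Everything therefore reduces to one lattice-type claim about active subsimplices.

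\textbf{Key step (closure under intersection).} If $F$ and $G$ are active, then $F \cap G$ is nonempty and active. I expect this to be the main obstacle, and it is where \Cref{asm:Cr-element} enters. The argument would run as follows.
\begin{itemize}
    \item Since $\mathbb{I}_{\setminus (F\cap G)} = \mathbb{I}_{\setminus F} \cup \mathbb{I}_{\setminus G}$, we have
    \begin{equation*}
        |\bm{\alpha}|_{\setminus(F\cap G)} \le |\bm{\alpha}|_{\setminus F} + |\bm{\alpha}|_{\setminus G} \le \bar{r}_{s} + \bar{r}_{s'}, \qquad s := d - t_F,\ s' := d - t_G.
    \end{equation*}
    \item If $F \cap G = \emptyset$, then $|\bm{\alpha}|_{\setminus(F\cap G)} = \bar{k}$. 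But $\bar{r}_s + \bar{r}_{s'} \le 2\bar{r}_d < \bar{k}$, a contradiction, so the intersection is nonempty.
    \item Counting vertices gives $t_{F\cap G} + 1 \ge (t_F + 1) + (t_G + 1) - (d+1)$, that is, $d - t_{F\cap G} \le s + s'$.
    \item If $F \subseteq G$ or $G \subseteq F$, the claim is trivial. Otherwise $d - t_{F\cap G} \ge \max(s, s') + 1$.
    \item The doubling chain in \Cref{asm:Cr-element} gives $2\bar{r}_{m} \le \bar{r}_{m+1}$ and monotonicity of $\bar{r}$. Hence
    \begin{equation*}
        \bar{r}_s + \bar{r}_{s'} \le 2\bar{r}_{\max(s,s')} \le \bar{r}_{\max(s,s')+1} \le \bar{r}_{d - t_{F\cap G}}.
    \end{equation*}
    So $F \cap G$ is active.
\end{itemize}

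\textbf{Covering.} It follows that whenever the family of active subsimplices is nonempty, it has a unique minimal element $F_{\bm{\alpha}}$, namely the intersection of all active subsimplices. I then check that $\bm{\alpha} \in \Sigma^{\bar{\bm{r}}}_{F_{\bm{\alpha}}, n}$ with $n = |\bm{\alpha}|_{\setminus F_{\bm{\alpha}}}$. Since $F_{\bm{\alpha}}$ is active, $n \le \bar{r}_{d - t_{F_{\bm{\alpha}}}}$. Now let $E$ be any subsimplex with $t_E < t_{F_{\bm{\alpha}}}$. If $E$ were active, then $E \cap F_{\bm{\alpha}}$ would be active by the key step, and it would be a proper face of $F_{\bm{\alpha}}$ because $\dim E < \dim F_{\bm{\alpha}}$. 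This contradicts minimality. Hence $|\bm{\alpha}|_{\setminus E} \ge \bar{r}_{d - t_E} + 1$ for all such $E$. If no subsimplex is active, $\bm{\alpha}$ lies in $\Sigma_0^{\bar{\bm{r}}}$. So the union in \eqref{eq:decompose-full} covers $\Sigma(\mathbb{I}_{d}, \bar{k})$.

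\textbf{Disjointness.} $\Sigma_0^{\bar{\bm{r}}}$ is disjoint from every $\Sigma_{F,n}^{\bar{\bm{r}}}$, since the latter requires $F$ to be active. For a fixed $F$, different $n$ give disjoint sets. Suppose $\bm{\alpha} \in \Sigma_{F,n}^{\bar{\bm{r}}} \cap \Sigma_{G,n'}^{\bar{\bm{r}}}$ with $F \ne G$, so both are active.
\begin{itemize}
    \item If $t_F < t_G$, the defining inequalities for $G$ say $F$ is not active, a contradiction. The case $t_G < t_F$ is symmetric.
    \item If $t_F = t_G$, then $F \cap G$ is active by the key step and has strictly smaller dimension, contradicting the conditions for $F$.
\end{itemize}
This yields the disjoint union.
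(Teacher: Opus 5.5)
Your proof is correct. The doubling condition $2\bar r_m \le \bar r_{m+1}$ together with $\bar k \ge 2\bar r_d+1$ makes the faces with $|\bm{\alpha}|_{\setminus F} \le \bar r_{d-t_F}$ nonempty under pairwise intersection and closed under it, so each $\bm{\alpha}$ falls into exactly one piece, indexed by its unique minimal such face. The paper does not prove this proposition itself and only cites \cite{hu2024construction}, whose argument rests on this same intersection property, so your route is essentially the standard one; the vertex-counting bound $d - t_{F\cap G} \le s+s'$ in your key step is never used and can be dropped.
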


\begin{figure}[htp]
    \centering
    \includegraphics[width=0.8\textwidth]{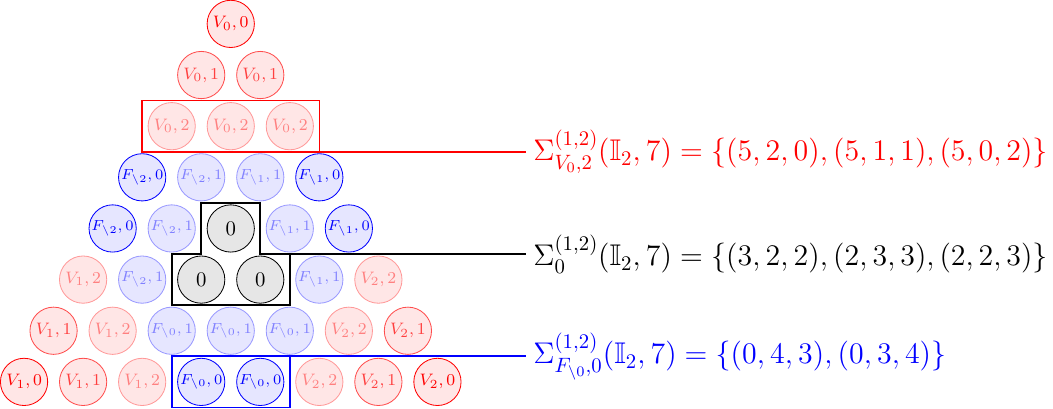}
    \caption{Illustration of the refined intrinsic decomposition of $\Sigma(\mathbb{I}_{d}, \bar{k})$ in \eqref{eq:decompose-full} for $d=2$, $\bar{\bm{r}} = (1, 2)$, and $\bar{k} = 7$. The multi-indices related to a vertex are visualized as red disks, the multi-indices related to an edge as blue disks, and the remaining multi-indices as gray disks. Different shades of a color indicate different layers of multi-indices.}
    \label{fig:refined}
\end{figure}

The refined intrinsic decomposition in \eqref{eq:decompose-full} is illustrated in \Cref{fig:refined} for $d=2$, $\bar{\bm{r}} = (1, 2)$, and $\bar{k} = 7$. 
The next lemma follows directly from \cite[Proposition 2.14]{hu2024construction}, with a change in notation. 
\begin{lemma}[Bijection lemma]
\label{lem:bij-lemma}

Suppose that the continuity vector $\bar{\bm{r}} := (\bar{r}_{1}, \ldots, \bar{r}_{d}) \in \mathbb{N}_{0}^{d}$ and the polynomial degree $\bar{k} \in \mathbb{N}_{0}$ jointly satisfy \Cref{asm:Cr-element}. Let $F$ be a $t$-dimensional subsimplex of $K$, where $\mathbb{I}_{F} := \{i_{0}, i_{1}, \ldots, i_{t}\}$ and $\mathbb{I}_{\setminus F} := \{i_{t + 1}, \ldots, i_{d}\}$.
    \begin{enumerate}
        \item  For the case $t = 0$, i.e., $F$ is a vertex $V$, there exists a bijection
        \begin{equation*}
            \begin{aligned}
                \Sigma^{\bar{\bm{r}}}_{V, n}(\mathbb{I}_{d}, \bar{k}) & \longrightarrow \Sigma(\mathbb{I}_{d, \setminus 0}, n), \\
                \bm{\alpha} & \longmapsto \bm{\theta}
            \end{aligned}
        \end{equation*}
        such that $\bm{\theta} := (\theta_{1}, \ldots, \theta_{d}) = (\alpha_{i_{1}}, \ldots, \alpha_{i_{d}})$.
        \item For the case $1 \le t \le d - 1$, there exists a bijection
        \begin{equation*}
            \begin{aligned}
                \Sigma^{\bar{\bm{r}}}_{F, n}(\mathbb{I}_{d}, \bar{k}) & \longrightarrow \Sigma(\mathbb{I}_{d, \setminus t}, n) \times \Sigma_{0}^{\bar{\bm{q}}_{t, n}}(\mathbb{I}_{F}, \bar{k} - n), \\
                \bm{\alpha} & \longmapsto (\bm{\theta}, \bm{\sigma})
            \end{aligned}
        \end{equation*}
        such that $\bm{\theta} := (\theta_{t + 1}, \ldots, \theta_{d}) = (\alpha_{i_{t + 1}}, \ldots, \alpha_{i_{d}})$ and $\sigma_{i} = \alpha_{i}$ for $i \in \mathbb{I}_{F}$, where the continuity vector $\bar{\bm{q}}_{t, n} := (\bar{r}_{d - t + 1} - n, \ldots, \bar{r}_{d} - n)$. 
    \end{enumerate}
\end{lemma}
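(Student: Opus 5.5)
Although the paper says this lemma follows from \cite[Proposition 2.14]{hu2024construction}, here is how I would verify it directly from the definitions. The map in each case is just restriction of coordinates. Since $\Sigma(\mathbb{I}_{d}, \bar{k})$ consists of multi-indices with fixed total sum, $\bm{\alpha}$ is determined by $(\bm{\theta}, \bm{\sigma})$, so injectivity is automatic. The work is to check that the image lands in the stated target set and that every element of the target set has a preimage.

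\emph{Vertex case $t=0$, $F = V = V_{i_0}$.} Given $\bm{\alpha} \in \Sigma^{\bar{\bm{r}}}_{V, n}(\mathbb{I}_{d}, \bar{k})$, we have $|\bm{\alpha}|_{\setminus V} = n$, so $\bm{\theta} \in \Sigma(\mathbb{I}_{d, \setminus 0}, n)$. The constraint over $t'$-subsimplices with $0 \le t' \le -1$ is vacuous. Conversely, for $\bm{\theta} \in \Sigma(\mathbb{I}_{d,\setminus 0}, n)$, set $\alpha_{i_0} = \bar{k} - n \ge 0$, which holds since $n \le \bar{r}_d < \bar{k}$.

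\emph{Case $1 \le t \le d-1$.} Write $\bm{\alpha} = (\bm{\sigma}, \bm{\theta})$ with $|\bm{\theta}| = n$ and $|\bm{\sigma}|_F = \bar{k} - n$. The key step is to show that the conditions $|\bm{\alpha}|_{\setminus E} \ge \bar{r}_{d-t'}+1$, over all $t'$-subsimplices $E$ of $K$ with $t' < t$, are equivalent to the conditions $|\bm{\sigma}|_{F,\setminus E} \ge \bar{q}_{t,n,\,t-t'} + 1 = \bar{r}_{d-t'} - n + 1$, over all $t'$-subsimplices $E$ of $F$. (Here I read the definition as indexed by $(\bar{q}_1,\dots,\bar{q}_t) = (\bar{r}_{d-t+1}-n,\dots,\bar{r}_d-n)$, so that $\bar{q}_{t-t'} = \bar{r}_{d-t'}-n$.)

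For $E \subset F$, we have $|\bm{\alpha}|_{\setminus E} = n + |\bm{\sigma}|_{F,\setminus E}$, so the two conditions coincide exactly. The remaining task, and the main obstacle, is to show that the conditions for $E \not\subset F$ are implied.

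For such an $E$, put $E' = E \cap F$, the face spanned by the common vertices; it may be empty. Then
\[
|\bm{\alpha}|_{\setminus E} \ge |\bm{\sigma}|_{F, \setminus E'} .
\]
\begin{itemize}
\item If $E'$ is empty, the right-hand side is $\bar{k} - n \ge 2\bar{r}_d + 1 - n \ge \bar{r}_d + 1 \ge \bar{r}_{d-t'}+1$.
\item If $E'$ is nonempty, of dimension $s < t'$, the condition for $E'$ (already imposed) gives $|\bm{\sigma}|_{F,\setminus E'} \ge \bar{r}_{d-s} - n + 1$. So it suffices that $\bar{r}_{d-s} - n \ge \bar{r}_{d-t'}$.
\end{itemize}

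This last inequality is where \Cref{asm:Cr-element} enters. We have $n \le \bar{r}_{d-t}$ and $\bar{r}_{d-s} \ge 2\bar{r}_{d-s-1} \ge \dots$, and $s < t' < t$ gives $d-t < d-t' \le d-s-1$. Hence
\[
\bar{r}_{d-s} \ge 2\bar{r}_{d-s-1} \ge \bar{r}_{d-t'} + \bar{r}_{d-t} \ge \bar{r}_{d-t'} + n ,
\]
using monotonicity. If this doubling bound fails at an edge case, I would fall back on a sharper bound. One option is to use $|\bm{\alpha}|_{\setminus E} \ge |\bm{\sigma}|_{F,\setminus E'} + (\text{coordinates of } \bm{\theta} \text{ off } E)$. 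The other is to induct on $t'$, choosing a maximal face of $F$ inside $E$.

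With the implied constraints established, surjectivity follows. Given $(\bm{\theta},\bm{\sigma})$ in the target, glue them into $\bm{\alpha}$. The $E \subset F$ conditions hold by hypothesis, and the others follow from the implication just shown. Thus $\bm{\alpha} \in \Sigma^{\bar{\bm{r}}}_{F,n}(\mathbb{I}_d,\bar{k})$, and the map is a bijection.
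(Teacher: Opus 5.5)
Your argument is correct and complete. The paper does not prove this lemma itself; it imports it from Proposition~2.14 of Hu--Lin--Wu ``with a change in notation,'' so your proposal is a self-contained replacement for that citation.

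Your key step is $|\bm{\alpha}|_{\setminus E}\ge|\bm{\sigma}|_{F,\setminus(E\cap F)}$. This is just the subadditivity $|\bm{\alpha}|_{\setminus(E\cap F)}\le|\bm{\alpha}|_{\setminus E}+|\bm{\alpha}|_{\setminus F}$. You close it with $\bar k\ge 2\bar r_d+1$ when $E\cap F=\emptyset$, and with the doubling hypothesis otherwise.

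The citation buys brevity. Your route makes explicit what the ``change in notation'' hides. In this paper, $\Sigma^{\bar{\bm r}}_{F,n}(\mathbb{I}_d,\bar k)$ imposes constraints for every lower-dimensional subsimplex $E$ of $K$, while $\Sigma_0^{\bar{\bm q}_{t,n}}(\mathbb{I}_F,\bar k-n)$ only sees subsimplices of $F$. You prove that the constraints for $E\not\subseteq F$ are redundant.

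Your route also pinpoints which hypotheses matter. Because $s\le t'-1\le t-2\le d-3$, the doubling step $\bar r_{d-s}\ge 2\bar r_{d-s-1}$ is only ever applied with $d-s-1\ge 2$. So $2\bar r_1\le\bar r_2$ is never needed for the bijection; monotonicity and $\bar k\ge 2\bar r_d+1$ handle the bottom level. That inequality is what keeps the facet pieces of the decomposition disjoint. It fails for the paper's $\bm r$, where $r_2\le 2r_1-1$, and that failure is what forces the use of the two vectors $\bm r^{\partial}$ and $\bm r^{\circ}$.

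For the same reason, your fallback remark about the doubling bound failing ``at an edge case'' can be dropped. The chain $\bar r_{d-s}\ge 2\bar r_{d-s-1}\ge\bar r_{d-t'}+\bar r_{d-t}\ge\bar r_{d-t'}+n$ never breaks down. For $t'=0$ (in particular whenever $t=1$), only the case $E\cap F=\emptyset$ occurs.
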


Each of these multi-index sets corresponds to the degrees of freedom defined on $F$ with derivatives of order $n$, through the bijection $\bm{\alpha} \mapsto \bm{\theta}$ or $\bm{\alpha} \mapsto (\bm{\theta}, \bm{\sigma})$. For a degree of freedom defined on $F$, the multi-index $\bm{\theta} \in \Sigma(\mathbb{I}_{d, \setminus t_{F}}, n)$ determines the normal derivative, while the multi-index $\bm{\sigma} \in \Sigma_{0}^{\bar{\bm{q}}_{t_{F}, n}}(\mathbb{I}_{F}, \bar{k} - n)$ determines the associated polynomial $\llbracket\bm{\lambda}\rrbracket^{\bm{\sigma}}$, with $t_F$ the dimension of $F$. 

In the remainder of the section, we always assume that $\bm{r}$, $\rho$, $k$, and $b$ jointly satisfy \Cref{asm:Cr-macro-element}. Given a continuity vector $\bm r$ and boundary layer number $b$, define the \emph{boundary continuity vector} as
    \begin{equation}
        \label{eq:r-boundary}
        \bm{r}^{\partial} := (r^{\partial}_{1}, \ldots, r^{\partial}_{d}) = (b - 1, r_{2}, \ldots, r_{d}),
    \end{equation}
    and the \emph{interior continuity vector} as
    \begin{equation}
        \label{eq:r-interior}
        \bm{r}^{\circ} := (r^{\circ}_{1}, \ldots, r^{\circ}_{d}) = (r_{1} - b, r_{2} - b, \ldots, r_{d} - b).
    \end{equation}

\begin{lemma}
    \label{prop:asm}
    Suppose that $\bm{r}$, $\rho$, $k$, and $b$ jointly satisfy \Cref{asm:Cr-macro-element}. 
    Then, 
    \begin{enumerate}
        \item $\bm{r}^{\partial}$ and $k$ jointly satisfy \Cref{asm:Cr-element};
        \item $\bm{r}^{\circ}$ and $\rho$ jointly satisfy \Cref{asm:Cr-element}.
    \end{enumerate}
\end{lemma}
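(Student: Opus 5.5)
This is a direct verification of the inequalities in \Cref{asm:Cr-element} for the two vectors, using only the inequalities collected in \Cref{asm:Cr-macro-element}. First I record what we have: $1\le r_1$, $\lceil (3r_1-1)/2\rceil \le r_2 \le 2r_1-1$, $2^{d-2}r_2\le 2^{d-3}r_3\le\cdots\le r_d$ (read as $2^{d-s}r_s \le 2^{d-s-1}r_{s+1}$ for $2\le s\le d-1$), $2r_d+1\le k$, $\rho=k-b$, and $2r_1-r_2\le b\le r_2-r_1+1$. In particular $b\ge 2r_1-r_2\ge 1$, so $b-1\ge 0$ and $\bm r^{\partial}\in\mathbb N_0^d$.

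\textbf{Part (1).} For $\bm r^{\partial}=(b-1,r_2,\dots,r_d)$ with degree $k$, every condition except the first is already contained in \Cref{asm:Cr-macro-element}. The chain $2^{d-2}r_2\le\cdots\le r_d$ and $2r_d+1\le k$ are inherited verbatim. The only new inequality is $2^{d-1}(b-1)\le 2^{d-2}r_2$, i.e.\ $2(b-1)\le r_2$. From $b\le r_2-r_1+1$ we get $2b-2\le 2r_2-2r_1$. So it suffices that $2r_2-2r_1\le r_2$, i.e.\ $r_2\le 2r_1$, which holds since $r_2\le 2r_1-1$.

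\textbf{Part (2).} For $\bm r^{\circ}=(r_1-b,\dots,r_d-b)$ with degree $\rho$, we first check nonnegativity. We have $r_1-b\ge r_1-(r_2-r_1+1)=2r_1-r_2-1\ge 0$, again by $r_2\le 2r_1-1$. The larger entries follow once the chain is monotone. Next we need $2(r_1-b)\le r_2-b$, i.e.\ $2r_1-r_2\le b$, which is exactly the lower bound on $b$. For $2\le s\le d-1$ we need $2(r_s-b)\le r_{s+1}-b$, i.e.\ $2r_s-r_{s+1}\le b$. Since $2r_s\le r_{s+1}$, the left side is $\le 0\le b$. Finally we need $2(r_d-b)+1\le\rho=k-b$, i.e.\ $2r_d+1\le k+b$, which follows from $2r_d+1\le k$ and $b\ge0$.

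No step is a genuine obstacle. The one point needing care is to read the scaled chain in \Cref{asm:Cr-element} as consecutive ratio conditions $\bar r_{s+1}\ge 2\bar r_s$, so that subtracting $b$ from every entry only helps. This is also the only place where the $d$-dependent powers of $2$ enter.
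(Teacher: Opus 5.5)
Your proof is correct and follows the paper's approach of directly verifying the inequalities of \Cref{asm:Cr-element}. The key steps, $2(b-1)\le r_2$ via $b\le r_2-r_1+1$ and $r_2\le 2r_1-1$, and $0\le 2(r_1-b)\le r_2-b$ via $b\ge 2r_1-r_2$, are exactly the paper's; you also make explicit what the paper leaves implicit, namely $b\ge 1$, the links $2r_s-r_{s+1}\le 0\le b$ for $s\ge 2$, and $2(r_d-b)+1\le \rho$.
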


\begin{proof}
    For the first part, we only need to prove $2 (b - 1) \le r_{2}$. Note that $2 (b - 1) \le 2 (r_{2} - r_{1}) \le r_{2} - 2 r_{1} + (2 r_{1} - 1) \le r_{2}$.

    For the second part, we only need to prove $0 \le 2 (r_{1} - b) \le r_{2} - b$. Note that $b \le r_{2} - r_{1} + 1 \le (2 r_{1} - 1) - r_{1} + 1 = r_{1}$, and $2 (r_{1} - b) \le r_{2} - b$, which completes the proof.
\end{proof}

By \Cref{prop:asm}, two sets of refined intrinsic decompositions of multi-index sets can be constructed. The first decomposition corresponds to linearly independent functionals on $\mathcal{P}_{k}(K)$, namely the degrees of freedom in \eqref{eq:set-dof-1} and the majority of those in \eqref{eq:set-dof-2}.

\begin{decomposition}[Refined intrinsic decomposition corresponding to $\bm r^{\partial}$]
    \label{decomp:A}
    There exists a refined intrinsic decomposition of $\Sigma(\mathbb{I}_{d}, k)$ corresponding to $\bm{r}^{\partial} := (r^{\partial}_{1}, \ldots, r^{\partial}_{d})$ defined in \eqref{eq:r-boundary}, such that
    \begin{equation*}
        \Sigma(\mathbb{I}_{d}, k) = \Sigma_{0}^{\bm{r}^{\partial}}(\mathbb{I}_{d}, k) \cup \Bigg(\bigcup_{F \in \mathcal{T}^{\partial}(K)} \bigcup_{n = 0}^{r^{\partial}_{d - t_{F}}} \Sigma_{F, n}^{\bm{r}^{\partial}}(\mathbb{I}_{d}, k)\Bigg),
    \end{equation*}
where $F$ is a $t_{F}$-dimensional subsimplex of $K$.
\end{decomposition}

Let $F$ be a $t$-dimensional subsimplex of $K$, where $\mathbb{I}_{F} := \{i_{0}, i_{1}, \ldots, i_{t}\}$ and $\mathbb{I}_{\setminus F} := \{i_{t + 1}, \ldots, i_{d}\}$. Now, we apply \Cref{lem:bij-lemma}:

\begin{itemize}
\renewcommand\labelitemi{-}
    \item For $t_{F} = 0$, i.e., $F$ is a vertex $V$, and $0 \le n \le r^{\partial}_{d} = r_{d}$, the multi-index set $\Sigma_{V, n}^{\bm{r}^{\partial}}(\mathbb{I}_{d}, k)$ is defined as
\begin{equation*}
    \Sigma_{V, n}^{\bm{r}^{\partial}}(\mathbb{I}_{d}, k) := \big\{\bm{\alpha} \in \Sigma(\mathbb{I}_{d}, k): |\bm{\alpha}|_{\setminus V} = n, \text{ and } |\bm{\alpha}|_{V} = k - n\big\}.
\end{equation*}
There exists a bijection
\begin{equation*}
    \begin{aligned}
        \mathcal{R}^{\partial}: \Sigma_{V, n}^{\bm{r}^{\partial}}(\mathbb{I}_{d}, k) & \longrightarrow \Sigma(\mathbb{I}_{d, \setminus 0}, n), \\
        \bm{\alpha} & \longmapsto \bm{\theta}
    \end{aligned}
\end{equation*}
such that $\bm{\theta} := (\theta_{1}, \ldots, \theta_{d}) = (\alpha_{i_{1}}, \ldots, \alpha_{i_{d}})$.
Then, we define the degree of freedom $\varphi_{\bm{\alpha}}$ for $\bm{\alpha} \in \Sigma_{V, n}^{\bm{r}^{\partial}}(\mathbb{I}_{d}, k)$ as
\begin{equation}
    \label{eq:alpha-dof-vertex}
    \varphi_{\bm{\alpha}}: u \longmapsto \bm{D}_{V}^{\bm{\theta}} u\big|_{V}.
\end{equation}
\item 
For $t_{F} = t$ such that $1 \le t \le d - 2$, and $0 \le n \le r^{\partial}_{d - t} = r_{d - t}$, the multi-index set $\Sigma_{F, n}^{\bm{r}^{\partial}}(\mathbb{I}_{d}, k)$ is defined as 
\begin{equation}
    \label{eq:Sigma-F-n}
    \begin{aligned}
        \Sigma_{F, n}^{\bm{r}^{\partial}}(\mathbb{I}_{d}, k) := \big\{\bm{\alpha} \in \Sigma(\mathbb{I}_{d}, k): & \; |\bm{\alpha}|_{\setminus F} = n, \text{ and }|\bm{\alpha}|_{\setminus E} \ge r_{d - t'} + 1 \\
        & \; \text{for each } t' \text{-dimensional } (0 \le t' \le t - 1) \\
        & \; \text{subsimplex } E \text{ of } K\big\}.
    \end{aligned}
\end{equation}
There exists a bijection
\begin{equation*}
    \begin{aligned}
        \mathcal{R}^{\partial}: \Sigma_{F, n}^{\bm{r}^{\partial}}(\mathbb{I}_{d}, k) & \longrightarrow \Sigma(\mathbb{I}_{d, \setminus t}, n) \times \Sigma_{0}^{\bm{q}^{\partial}_{t, n}}(\mathbb{I}_{F}, k - n), \\
        \bm{\alpha} & \longmapsto (\bm{\theta}, \bm{\sigma})
    \end{aligned}
\end{equation*}
such that $\bm{\theta} := (\theta_{t + 1}, \ldots, \theta_{d}) = (\alpha_{i_{t + 1}}, \ldots, \alpha_{i_{d}})$ and $\sigma_{i} = \alpha_{i}$ for $i \in \mathbb{I}_{F}$, where the continuity vector $\bm{q}^{\partial}_{t, n} := (r^{\partial}_{d - t + 1} - n, \ldots, r^{\partial}_{d} - n) = (r_{d - t + 1} - n, \ldots, r_{d} - n) =: \bm{q}_{t, n}$.
Then, we define the degree of freedom $\varphi_{\bm{\alpha}}$ for $\bm{\alpha} \in \Sigma_{F, n}^{\bm{r}^{\partial}}(\mathbb{I}_{d}, k)$ as
\begin{equation}
    \label{eq:alpha-dof}
    \varphi_{\bm{\alpha}}: u \longmapsto \frac{1}{|F|} \int_{F} (\bm{D}_{F}^{\bm{\theta}} u) \cdot \llbracket\bm{\lambda}\rrbracket_{F}^{\bm{\sigma}}.
\end{equation}
\item 
For $t_{F} = d - 1$, and $0 \le n \le r^{\partial}_{1} = b - 1$, the multi-index set $\Sigma_{F, n}^{\bm{r}^{\partial}}(\mathbb{I}_{d}, k)$ is defined by \eqref{eq:Sigma-F-n}, and there exists a bijection
\begin{equation*}
    \begin{aligned}
        \mathcal{R}^{\partial}: \Sigma_{F, n}^{\bm{r}^{\partial}}(\mathbb{I}_{d}, k) & \longrightarrow \Sigma(\{d\}, n) \times \Sigma_{0}^{\bm{q}^{\partial}_{d - 1, n}}(\mathbb{I}_{F}, k - n), \\
        \bm{\alpha} & \longmapsto (\bm{\theta}, \bm{\sigma})
    \end{aligned}
\end{equation*}
such that $\bm{\theta} := (\theta_{d}) = (\alpha_{j}) = (n)$ and $\sigma_{i} = \alpha_{i}$ for $i \in \mathbb{I}_{F}$, where $\{j\} = \mathbb{I}_{\setminus F}$ and the continuity vector $\bm{q}^{\partial}_{d - 1, n} := (r^{\partial}_{2} - n, \ldots, r^{\partial}_{d} - n) = (r_{2} - n, \ldots, r_{d} - n) =: \bm{q}_{d - 1, n}$. Then, we define the degree of freedom $\varphi_{\bm{\alpha}}$ for $\bm{\alpha} \in \Sigma_{F, n}^{\bm{r}^{\partial}}(\mathbb{I}_{d}, k)$ as
\begin{equation}
    \label{eq:alpha-dof-d-1}
    \varphi_{\bm{\alpha}}: u \longmapsto \frac{1}{|F|} \int_{F} (\bm{D}_{F}^{\bm{\theta}} u) \cdot \llbracket\bm{\lambda}\rrbracket_{F}^{\bm{\sigma}} = \frac{1}{|F|} \int_{F} \bigg(\frac{\partial^{n}}{\partial \bm{n}_{F}^{n}} u\bigg) \cdot \llbracket\bm{\lambda}\rrbracket_{F}^{\bm{\sigma}}.
\end{equation}
\end{itemize}

\begin{remark}
    We do not use $\Sigma_0^{\bm r^{\partial}} (\mathbb I_d, k)$ to define our degrees of freedom. Therefore, we do not give further details about this multi-index set.
\end{remark}

The second decomposition corresponds to linearly independent functionals on $Q_{k}^{\rho}(\mathcal{P}_{\rho}(K))$, namely the linearly dependent part of \eqref{eq:set-dof-2} together with the bubble function degrees of freedom in \eqref{eq:set-dof-3}. 

\begin{decomposition}[Refined intrinsic decomposition corresponding to $\bm r^{\circ}$]
    \label{decomp:B}
    There exists a refined intrinsic decomposition of $\Sigma(\mathbb{I}_{d}, \rho)$ corresponding to $\bm{r}^{\circ} := (r^{\circ}_{1}, \ldots, r^{\circ}_{d})$ defined in \eqref{eq:r-interior}, such that
    \begin{equation}
        \label{eq:decompose-beta}
        \Sigma(\mathbb{I}_{d}, \rho) = \Sigma_{0}^{\bm{r}^{\circ}}(\mathbb{I}_{d}, \rho) \cup \Bigg(\bigcup_{F \in \mathcal{T}^{\partial}(K)} \bigcup_{n = 0}^{r^{\circ}_{d - t_{F}}} \Sigma_{F, n}^{\bm{r}^{\circ}}(\mathbb{I}_{d}, \rho)\Bigg),
    \end{equation}
    where $F$ is a $t_{F}$-dimensional subsimplex of $K$. 
\end{decomposition}

In this decomposition, $\Sigma_{0}^{\bm{r}^{\circ}}(\mathbb{I}_{d}, \rho)$ is defined by \eqref{eq:Sigma-0-d}.
For $t_{F} = d - 1$, and $0 \le n \le r^{\circ}_{1} = r_{1} - b$, there exists a bijection
\begin{equation*}
    \begin{aligned}
        \mathcal{R}^{\circ}: \Sigma_{F, n}^{\bm{r}^{\circ}}(\mathbb{I}_{d}, \rho) &\longrightarrow \Sigma(\{d\}, n) \times \Sigma_{0}^{\bm{q}^{\circ}_{d - 1, n}}(\mathbb{I}_{F}, \rho - n), \\
        \bm{\beta} & \longmapsto (\bm{\theta}, \bm{\sigma})\end{aligned}
\end{equation*}
such that $\bm{\theta} := (\theta_{d}) = (\beta_{j}) = (n)$ and $\sigma_{i} = \beta_{i}$ for $i \in \mathbb{I}_{F}$, where $\{j\} = \mathbb{I}_{\setminus F}$ and the continuity vector $\bm{q}^{\circ}_{d - 1, n} := (r^{\circ}_{2} - n, \ldots, r^{\circ}_{d} - n) = (r_{2} - (n + b), \ldots, r_{d} - (n + b)) =: \bm{q}_{d - 1, n + b}$.
Therefore, for $t_{F} = d - 1$, and $b \le n \le r_{1}$, the multi-index set $\Sigma_{F, n - b}^{\bm{r}^{\circ}}(\mathbb{I}_{d}, \rho)$ is defined as 
\begin{equation}
    \label{eq:Sigma-F-n-b}
    \begin{aligned}
        \Sigma_{F, n - b}^{\bm{r}^{\circ}}(\mathbb{I}_{d}, \rho) := \big\{\bm{\beta} \in \Sigma(\mathbb{I}_{d}, \rho): & \; |\bm{\beta}|_{\setminus F} = n - b, \text{ and }|\bm{\beta}|_{\setminus E} \ge r_{d - t'} - b + 1 \\
        & \; \text{for each } t' \text{-dimensional } (0 \le t' \le d - 2) \\
        & \; \text{subsimplex } E \text{ of } K\big\}.
    \end{aligned}
\end{equation}
There exists a bijection
\begin{equation*}
    \begin{aligned}
        \mathcal{R}^{\circ}_{b}: \Sigma_{F, n - b}^{\bm{r}^{\circ}}(\mathbb{I}_{d}, \rho) & \longrightarrow \Sigma(\{d\}, n) \times \Sigma_{0}^{\bm{q}_{d - 1, n}}(\mathbb{I}_{F}, k - n), \\
        \bm{\beta} & \longmapsto (\bm{\theta}, \bm{\sigma})
    \end{aligned}
\end{equation*}
such that $\bm{\theta} := (\theta_{d}) = (\beta_{j} + b) = (n)$ and $\sigma_{i} = \beta_{i}$ for $i \in \mathbb{I}_{F}$, where $\{j\} = \mathbb{I}_{\setminus F}$ and the continuity vector $\bm{q}_{d - 1, n} := (r_{2} - n, \ldots, r_{d} - n)$. Then, we define the degree of freedom $\psi_{\bm{\beta}}$ for $\bm{\beta} \in \Sigma_{F, n - b}^{\bm{r}^{\circ}}(\mathbb{I}_{d}, \rho)$ as
\begin{equation*}
    \psi_{\bm{\beta}}: u \longmapsto \frac{1}{|F|} \int_{F} (\bm{D}_{F}^{\bm{\theta}} u) \cdot \llbracket\bm{\lambda}\rrbracket_{F}^{\bm{\sigma}} = \frac{1}{|F|} \int_{F} \bigg(\frac{\partial^{n}}{\partial \bm{n}_{F}^{n}} u\bigg) \cdot \llbracket\bm{\lambda}\rrbracket_{F}^{\bm{\sigma}}.
\end{equation*}

Finally, we define the degree of freedom $\psi_{\bm{\beta}}$ for $\bm{\beta} \in \Sigma_{0}^{\bm{r}^{\circ}}(\mathbb{I}_{d}, \rho)$ as
\begin{equation*}
    \psi_{\bm{\beta}}: u \longmapsto \frac{1}{|K|} \int_{K} u \cdot Q_{k}^{\rho}(\llbracket\bm{\lambda}\rrbracket^{\bm{\beta}}).
\end{equation*}

With the help of the pair of refined intrinsic decompositions and the bijections $\mathcal{R}^{\partial}$ and $\mathcal{R}^{\circ}_{b}$, we can express the set of degrees of freedom defined in \eqref{eq:set-dof-1}--\eqref{eq:set-dof-3} in a multi-index way.

\begin{proposition}
    \label{prop:equivalent-set-dofs}
    Suppose that $\bm{r}$, $\rho$, $k$, and $b$ jointly satisfy \Cref{asm:Cr-macro-element}. 
    Then, the set of degrees of freedom defined in \eqref{eq:set-dof-1}--\eqref{eq:set-dof-3} can be expressed as
    \begin{equation}
        \label{eq:equivalent-set-dofs}
        \big\{\varphi_{\bm{\alpha}}: \bm{\alpha} \in \Sigma^{\partial}\big\} \cup \big\{\psi_{\bm{\beta}}: \bm{\beta} \in \Sigma^{\circ}\big\},
    \end{equation}
    where the multi-index sets $\Sigma^{\partial}$ and $\Sigma^{\circ}$ are defined as
    \begin{equation}
        \label{eq:alpha-set}
        \Sigma^{\partial} := \bigcup_{F \in \mathcal{T}^{\partial}(K)} \bigcup_{n = 0}^{r^{\partial}_{d - t_{F}}} \Sigma_{F, n}^{\bm{r}^{\partial}}(\mathbb{I}_{d}, k),
    \end{equation}
    and
    \begin{equation}
        \label{eq:beta-set}
        \Sigma^{\circ} := \Sigma_{0}^{\bm{r}^{\circ}}(\mathbb{I}_{d}, \rho) \cup \Bigg(\bigcup_{j \in \mathbb{I}_{d}} \bigcup_{n = b}^{r_{1}} \Sigma_{F_{\setminus j}, n - b}^{\bm{r}^{\circ}}(\mathbb{I}_{d}, \rho)\Bigg).
    \end{equation}
    Recall that $F_{\setminus j}$ is the common ($d - 1$)-dimensional subsimplex of $K$ and $K_{j}$.
\end{proposition}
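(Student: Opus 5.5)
We match the three families of degrees of freedom in \Cref{def:local-dofs} one by one with the functionals $\varphi_{\bm\alpha}$ and $\psi_{\bm\beta}$, using the bijections $\mathcal R^{\partial}$ and $\mathcal R^{\circ}_b$ (consequences of \Cref{lem:bij-lemma} together with \Cref{prop:asm}). Since both sides are explicit sets of functionals indexed by multi-indices, it suffices to show that the index sets correspond bijectively and that corresponding functionals coincide. The bubble degrees of freedom \eqref{eq:set-dof-3} are literally $\psi_{\bm\beta}$ for $\bm\beta\in\Sigma_0^{\bm r^{\circ}}(\mathbb I_d,\rho)$, so nothing is to be done there.

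\textbf{Vertices and subsimplices of dimension $1\le t\le d-2$.} For a vertex $V$ we have $r^{\partial}_d=r_d$, so $n$ ranges over $0,\dots,r_d$ exactly as in \eqref{eq:set-dof-1}. The bijection $\mathcal R^{\partial}:\Sigma^{\bm r^{\partial}}_{V,n}(\mathbb I_d,k)\to\Sigma(\mathbb I_{d,\setminus0},n)$ matches $\varphi_{\bm\alpha}$ in \eqref{eq:alpha-dof-vertex} with $\bm D_V^{\bm\theta}u|_V$. For a $t$-dimensional $F$ with $1\le t\le d-2$ we have $r^{\partial}_{d-t}=r_{d-t}$, since $d-t\ge2$. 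Moreover $\bm q^{\partial}_{t,n}=\bm q_{t,n}$, because the entries $r^{\partial}_{d-t+1},\dots,r^{\partial}_d$ only involve indices $\ge2$. Hence $\mathcal R^{\partial}$ identifies $\{\varphi_{\bm\alpha}:\bm\alpha\in\Sigma^{\bm r^{\partial}}_{F,n}\}$ with \eqref{eq:set-dof-2} for all $0\le n\le r_{d-t}$.

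\textbf{Facets.} This is the only place where the two decompositions interact. For $F=F_{\setminus j}$, \eqref{eq:set-dof-2} runs over $0\le n\le r_1$ and $\bm\sigma\in\Sigma_0^{\bm q_{d-1,n}}(\mathbb I_F,k-n)$. We split this range at $b$.
\begin{itemize}
\item For $0\le n\le b-1=r^{\partial}_1$, the bijection $\mathcal R^{\partial}$ of the third bullet gives exactly the $\varphi_{\bm\alpha}$ in \eqref{eq:alpha-dof-d-1}, using $\bm q^{\partial}_{d-1,n}=\bm q_{d-1,n}$.
\item For $b\le n\le r_1$, the shifted bijection $\mathcal R^{\circ}_b:\Sigma^{\bm r^{\circ}}_{F,n-b}(\mathbb I_d,\rho)\to\Sigma(\{d\},n)\times\Sigma_0^{\bm q_{d-1,n}}(\mathbb I_F,k-n)$ gives the $\psi_{\bm\beta}$.
\end{itemize}
To justify $\mathcal R^{\circ}_b$, we start from $\mathcal R^{\circ}$ with index $n-b\in[0,r^{\circ}_1]$; this is valid because $r^{\circ}_1=r_1-b\ge0$ by the proof of \Cref{prop:asm}. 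We then check:
\begin{itemize}
\item the target sets agree: $\bm q^{\circ}_{d-1,n-b}=(r_2-n,\dots,r_d-n)=\bm q_{d-1,n}$, and $\rho-(n-b)=k-n$;
\item the map $\bm\theta$ is shifted by $b$, so that $\theta_d=n$.
\end{itemize}
The two ranges are disjoint and together cover $0\le n\le r_1$, so every facet degree of freedom appears exactly once.

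\textbf{Main obstacle.} The delicate point is this bookkeeping at the facets. We must confirm two things. First, the $\varphi$ part stops exactly at $n=b-1$, which holds because $\bm r^{\partial}$ replaces $r_1$ by $b-1$. Second, the $\psi$ part begins at $n=b$, and its $\bm\sigma$-sets are precisely $\Sigma_0^{\bm q_{d-1,n}}(\mathbb I_F,k-n)$; this follows from the index identity $\rho-(n-b)=k-n$ and the shift $\bm q^{\circ}_{d-1,n-b}=\bm q_{d-1,n}$. Once these are checked, taking the union over all $F\in\mathcal T^{\partial}(K)$ yields \eqref{eq:equivalent-set-dofs}, with $\Sigma^{\partial}$ and $\Sigma^{\circ}$ as in \eqref{eq:alpha-set} and \eqref{eq:beta-set}.
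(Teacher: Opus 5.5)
Your proposal is correct and is essentially the paper's own argument. The paper gives no separate proof: the proposition follows directly from the bijections set up just before it, namely $\mathcal{R}^{\partial}$ for vertices, for subsimplices of dimension $\le d-2$, and for facets with $0\le n\le b-1$, and $\mathcal{R}^{\circ}_{b}$ for facets with $b\le n\le r_1$, while the bubble functionals coincide with \eqref{eq:set-dof-3} verbatim. The index identities you single out ($\bm{q}^{\partial}_{t,n}=\bm{q}_{t,n}$, $\bm{q}^{\circ}_{d-1,n-b}=\bm{q}_{d-1,n}$, $\rho-(n-b)=k-n$, and $r^{\circ}_1=r_1-b\ge 0$) are exactly the checks required.
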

The decompositions in \eqref{eq:alpha-set} and \eqref{eq:beta-set} are illustrated in \Cref{fig:dofs-2d} for $d=2$ and \Cref{fig:dofs-3d} for $d=3$. In the first case, we have $\bm{r} = (3, 4)$, $\rho = 7$, $k = 9$, and $b = 2$. In the second case, we have $\bm{r} = (3, 4)$, $\rho = 7$, $k = 9$, and $b = 2$. 

\begin{figure}[htp]
    \centering
    \includegraphics[width=0.8\textwidth]{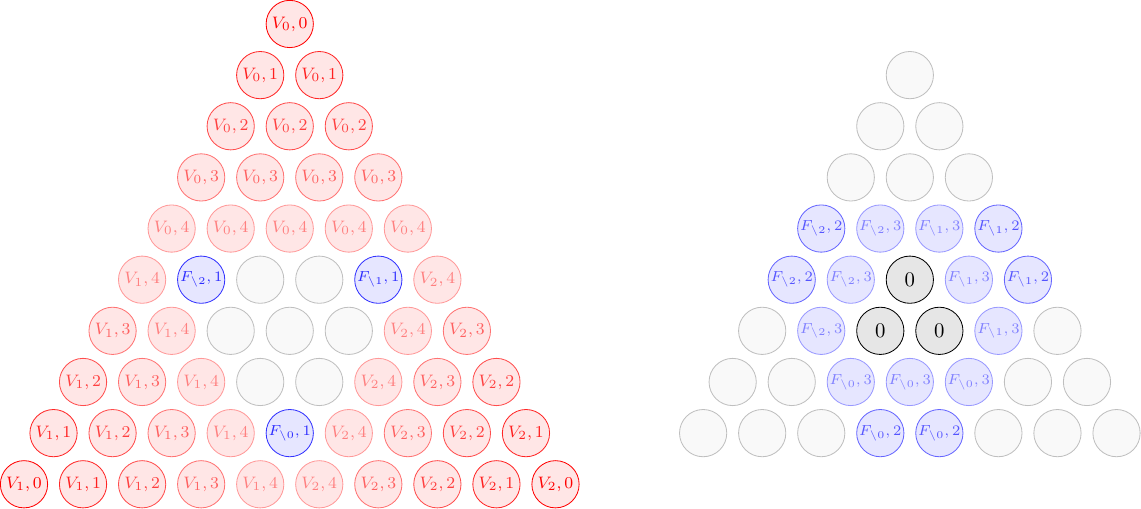}
    \caption{Illustration of the decomposition of $\Sigma^{\partial}$ in \eqref{eq:alpha-set} (left) and of $\Sigma^{\circ}$ in \eqref{eq:beta-set} (right) for $d = 2$, $\bm{r} = (3, 4)$, $\rho = 7$, $k = 9$, and $b = 2$. The multi-indices related to a vertex are visualized as red disks, the multi-indices related to an edge as blue disks, and the remaining multi-indices as gray disks. Different shades of a color indicate different layers of multi-indices.} 
    \label{fig:dofs-2d}
\end{figure}

\begin{figure}[htp]
    \centering
    \includegraphics[width=0.8\textwidth]{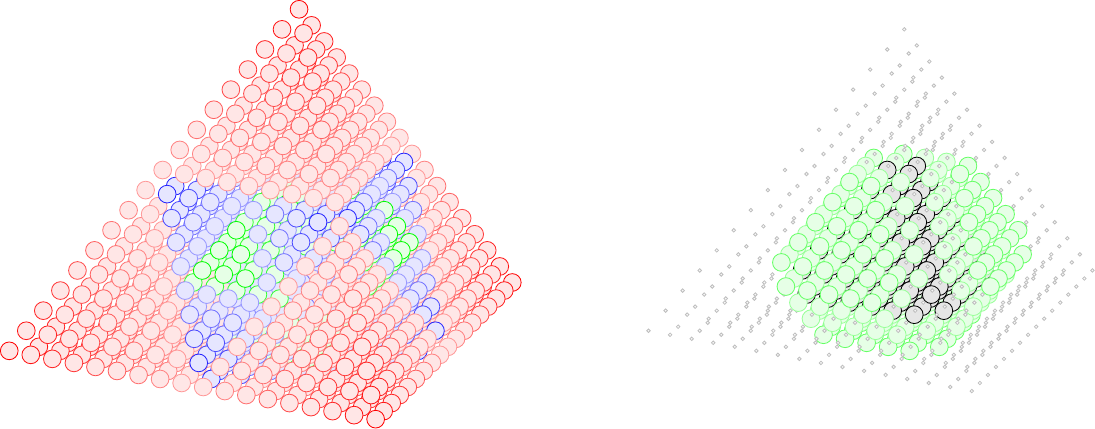}
    \caption{Illustration of the decomposition of $\Sigma^{\partial}$ in \eqref{eq:alpha-set} (left) and of $\Sigma^{\circ}$ in \eqref{eq:beta-set} (right) for $d = 3$, $\bm{r} = (3, 4, 8)$, $\rho = 15$, $k = 17$, and $b = 2$. The multi-indices related to a vertex are visualized as red balls, the multi-indices related to an edge as blue balls, the multi-indices related to a facet as green balls, and the remaining multi-indices as gray balls. Different shades of a color indicate different layers of multi-indices.}
    \label{fig:dofs-3d}
\end{figure}

\section{Properties of $Q$-Operators}
\label{sec:properties}

This section discusses the properties of $Q$-operators defined in \Cref{defi:Q-op}. First, we give a characterization of the range of the $Q$-operator.

Given the polynomial degree $k$ and the split point continuity $\rho$ such that $\rho \le k - 1$, set the boundary layer number $b := k - \rho$ and define the piecewise polynomial space $\mathcal{Q}_{k}^{\rho}(\mathcal{T}_{A}(K))$ on the Alfeld split $\mathcal{T}_{A}(K)$ as
\begin{equation}
    \label{eq:Q-k-b-space}
    \begin{aligned}
        \mathcal{Q}_{k}^{\rho}(\mathcal{T}_{A}(K)) := \big\{u \in L^{2}(K): & \; u|_{K_{j}} \in \mathcal{P}_{k}(K_{j}) \text{ for each } j \in \mathbb{I}_{d}; \\
        & \; \nabla^{n} (u|_{K_{j}})|_{F_{\setminus j}} = 0 \text{ for each } j \in \mathbb{I}_{d} \text{ and } 0 \le n \le b - 1, \\
        & \; \text{where } F_{\setminus j} \text{ is the } (d - 1) \text{-dimensional subsimplex of } K \text{ and } K_{j}; \\
        & \; \nabla^{n} u \text{ is single-valued at } V_{A} \text{ for } 0 \le n \le \rho\big\}.
    \end{aligned}
\end{equation}

For each $j \in \mathbb{I}_{d}$ and each piecewise polynomial function $u \in \mathcal{Q}_{k}^{\rho}(\mathcal{T}_{A}(K))$, we have $u|_{K_{j}} \in \mathcal{P}_{k}(K_{j})$ and $\nabla^{n} (u|_{K_{j}})|_{F_{\setminus j}} = 0$ for $0 \le n \le b - 1$. Therefore, there exists a bijection between the restriction space $\{u|_{K_{j}}: u \in \mathcal{Q}_{k}^{\rho}(\mathcal{T}_{A}(K))\}$ and the polynomial space $\mathcal{P}_{k - b}(K_{j}) = \mathcal{P}_{\rho}(K_{j})$. Moreover, since $\nabla^{n} u$ is single-valued at $V_{A}$ for $0 \le n \le \rho$, a piecewise polynomial function $u \in \mathcal{Q}_{k}^{\rho}(\mathcal{T}_{A}(K))$ is uniquely determined by its restriction $u|_{K_{j}}$, which is related to a polynomial in $\mathcal{P}_{\rho}(K_{j})$.

The following proposition shows that the operator $Q_{k}^{\rho}$ in \Cref{defi:Q-op} is a bijection between $\mathcal{P}_{\rho}(K)$ and $\mathcal{Q}_{k}^{\rho}(\mathcal{T}_{A}(K))$.

\begin{proposition}
    \label{prop:Q-k-b-bijection}
    Given the polynomial degree $k$ and the split point continuity $\rho$ such that $\rho \le k - 1$, $Q_{k}^{\rho}$ is a linear bijection operator from $\mathcal{P}_{\rho}(K)$ to $\mathcal{Q}_{k}^{\rho}(\mathcal{T}_{A}(K))$.
\end{proposition}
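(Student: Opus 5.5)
The plan is to verify three things in turn: that $Q_{k}^{\rho}$ is well defined and linear, that its image lies in $\mathcal{Q}_{k}^{\rho}(\mathcal{T}_{A}(K))$, and that it is bijective. The guiding observation is that on each $K_j$ the operator is simply multiplication by a fixed power of $\lambda_{j,j}$. Indeed, from \eqref{eq:Q-k-b-operator},
\[
Q_{k}^{\rho}(p)\big|_{K_j} = \frac{\beta_j!}{(\beta_j+b)!}\,\lambda_{j,j}^{b}\cdot(\text{term}),
\]
so the factor depends on $\beta_j$. It is therefore cleaner to write it as a differential identity. Put $\partial_j$ for the derivative along the direction in which $\lambda_{j,j}$ varies and the other $\lambda_{j,i}$ are held fixed, i.e. $\partial_j$ is differentiation with respect to $\lambda_{j,j}$ in the homogeneous barycentric representation. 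Then
\[
\partial_{\lambda_{j,j}}^{b}\, Q_{k}^{\rho}(p)\big|_{K_j} = p
\]
as homogeneous forms of degree $\rho$. Moreover, $Q_{k}^{\rho}(p)|_{K_j}$ is the unique homogeneous degree-$k$ form with this property that vanishes to order $b$ on $\{\lambda_{j,j}=0\}$: it is the $b$-fold antiderivative in $\lambda_{j,j}$ starting from $0$. Well-definedness and linearity are immediate, because the Bernstein expansion of $p$ of degree $\rho$ with respect to $K_j$ is unique.

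For membership in $\mathcal{Q}_{k}^{\rho}(\mathcal{T}_{A}(K))$, the face condition is easy. Each term on $K_j$ carries the factor $\lambda_{j,j}^{\beta_j+b}$ with $\beta_j+b\ge b$, so all derivatives of order at most $b-1$ vanish on $F_{\setminus j}=\{\lambda_{j,j}=0\}$. The main obstacle is smoothness of order $\rho$ at $V_A$. Near $V_A$ I would use the homogeneous (projective) viewpoint. Write the affine coordinates centred at $V_A$: on $K_j$, $\lambda_{j,j}=\lambda_j/\mu_j$ by \eqref{eq:transformation}, and $\lambda_{j,i}=\lambda_i-(\mu_i/\mu_j)\lambda_j$ for $i\ne j$. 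At $V_A$ these $\lambda_{j,i}$ with $i\neq j$ vanish, and $\lambda_{j,j}=1$.

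The derivatives at $V_A$ of order at most $\rho$ of a polynomial $g$ of degree $k$ on $K_j$ are determined by the Bernstein coefficients of $g$ with $\alpha_j\ge k-\rho=b$. These are exactly the coefficients of $\llbracket\bm{\lambda}\rrbracket_j^{\bm{\beta}+b\bm e_j}$, and by the Taylor/blossom principle they coincide with the derivatives of order at most $\rho$ of $\sum c_{\bm\beta}\llbracket\bm{\lambda}\rrbracket_j^{\bm\beta}$ taken through the ``degree-raising'' correspondence. Concretely, I would show the identity
\[
\nabla^{n}\bigl(Q_{k}^{\rho}(p)|_{K_j}\bigr)(V_A)=\nabla^{n}\bigl(T p\bigr)(V_A),\qquad 0\le n\le\rho ,
\]
for a single operator $T$ that does not depend on $j$. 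A natural candidate is the $b$-fold antiderivative of $p$ along rays from $V_A$. In coordinates $x$ with $V_A=0$, one has $\lambda_{j,j}=1-\ell_j(x)$, where the $\ell_j$ are linear and vanish at $0$, and $\lambda_{j,i}$ for $i\ne j$ are linear forms in $x$. A Bernstein monomial with $\alpha_j=\beta_j+b$ expands as $\lambda_{j,j}^{\beta_j+b}\cdot(\text{degree-}(\rho-\beta_j)\text{ form in }x)$.

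Rather than expanding, I would argue with the ``homogenization at $V_A$'' trick. The Taylor jet of order $\rho$ at $V_A$ of a degree-$k$ form $g=\sum_{\bm\alpha} a_{\bm\alpha}\llbracket\bm\lambda\rrbracket_j^{\bm\alpha}$ depends only on the polar form $g(V_A,\dots,V_A,x,\dots,x)$ with $k-n$ copies of $V_A$, for $n\le\rho$. By blossoming, this equals $\frac{(k-n)!}{k!}$ times the $(k-n)$-th directional derivative of $g$ in direction $V_A$, evaluated in homogeneous form. In barycentric terms, the directional derivative along $V_A$ is $\partial_{\lambda_{j,j}}$ on $K_j$, because $V_A$ is the vertex $V_{j,j}$. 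Hence the jets at $V_A$ are governed by $\partial_{\lambda_{j,j}}^{b}Q_{k}^{\rho}(p)|_{K_j}=p$, and $p$ is a single global polynomial. Tracking the factorials, the order-$n$ jet of $Q_{k}^{\rho}(p)|_{K_j}$ at $V_A$ equals $\frac{(\rho-n)!}{(k-n)!}$ times an expression in the order-$n$ jet of $p$ that is independent of $j$. The precise constant I will not verify, but this is the step I expect to require the most care. Since $\partial_{V_A}$ is intrinsic (it is homogeneous differentiation in the direction of the point $V_A$), this gives single-valuedness of $\nabla^n$ at $V_A$ for $n\le\rho$.

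Bijectivity then follows quickly. Injectivity: if $Q_{k}^{\rho}(p)=0$ then on $K_0$ all $c_{\bm\beta}=0$, since the monomials $\llbracket\bm\lambda\rrbracket_0^{\bm\beta+b\bm e_0}$ are linearly independent, and so $p=0$. Surjectivity: by the discussion preceding the proposition, an element $u\in\mathcal{Q}_{k}^{\rho}(\mathcal{T}_{A}(K))$ satisfies $u|_{K_0}=\lambda_{0,0}^b q$ with $q\in\mathcal{P}_\rho$. We can therefore choose $p$ (the $b$-fold $\partial_{\lambda_{0,0}}$-derivative of $u|_{K_0}$, extended globally) such that $Q_{k}^{\rho}(p)|_{K_0}=u|_{K_0}$. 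Both $u$ and $Q_{k}^{\rho}(p)$ lie in $\mathcal{Q}_{k}^{\rho}$ and agree on $K_0$, so by the uniqueness remark preceding the proposition (an element is determined by one restriction) they coincide. Equivalently, one can compare the dimensions: both spaces have dimension $\dim\mathcal{P}_\rho$.
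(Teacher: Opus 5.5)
Your argument is correct, but it establishes the key point, that $\nabla^n Q_k^\rho(p)$ is single-valued at $V_A$ for $n\le\rho$, by a different route from the paper.

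The paper works in the edge basis $\bm e_i$ (from $V_A$ to $V_i$, $i\ne j$) of each $K_j$. It computes the mixed derivatives of $\llbracket\bm\lambda\rrbracket_j^{\bm\beta}$ and $\llbracket\bm\lambda\rrbracket_j^{\bm\beta+b\bm e_j}$ at $V_A$ explicitly, and reads off $\nabla^n(Q_k^\rho(p)|_{K_j})|_{V_A}=\frac{(\rho-n)!}{(k-n)!}\nabla^n p|_{V_A}$. This single identity then does double duty: it gives membership, and it gives surjectivity, because the piecewise preimages $p_j$ of a given $u$ have the same $\rho$-jet at $V_A$ and hence coincide.

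You instead homogenize. The operator $\partial_{\lambda_{j,j}}$ is the directional derivative $D_{\hat V_A}$ along the lift $\hat V_A=(V_A,1)\in\mathbb{R}^{d+1}$, which is the same for every $j$. So on $K_j$ the operator is the $b$-fold antiderivative of the homogenized $p$ in that direction, normalized to vanish to order $b$ on $F_{\setminus j}$. The $n$-th derivatives at $V_A$ of a degree-$k$ polynomial only see $D_{\hat V_A}^{k-n}$ of its homogenization, and for $n\le\rho$ this factors through $D_{\hat V_A}^{b}$. This explains conceptually why the different antiderivatives on different pieces share their $\rho$-jet at $V_A$.

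The constant you left unverified is actually immediate. Euler's identity gives $H(\hat V_A)=D_{\hat V_A}^m H/m!$ for forms of degree $m$. Hence $D_{y_1}\cdots D_{y_n}g(V_A)=\frac{1}{(k-n)!}D_{y_1}\cdots D_{y_n}D_{\hat V_A}^{k-n}G$ for tangent directions $y_i$. The same formula with $\rho$ in place of $k$ for $p$ reproduces exactly the paper's factor $\frac{(\rho-n)!}{(k-n)!}$. Your blossom normalization should read $\frac{n!}{k!}$ rather than $\frac{(k-n)!}{k!}$, but nothing in your argument depends on it.

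For surjectivity you cite the remark preceding the proposition, that an element of $\mathcal{Q}_k^\rho(\mathcal{T}_A(K))$ is determined by one restriction. That remark is only sketched in the paper, and it is exactly where the jet condition enters, so you should give its one-line proof. If $w$ in that space vanishes on $K_0$, its $\rho$-jet at $V_A$ vanishes. Hence on each $K_j$ the Bernstein coefficients with $\alpha_j\ge b$ vanish, by the jet condition at the vertex $V_{j,j}$. The coefficients with $\alpha_j\le b-1$ vanish by the face condition.

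In short, the paper's computation buys an explicit, self-contained formula used twice. Yours buys a coordinate-free reason for the identity with less bookkeeping.
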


\begin{proof}
    For each $d$-dimensional simplex $K_{j} \in \mathcal{T}_{A}(K)$, $j \in \mathbb{I}_{d}$, consider the linearly independent vectors $\bm{e}_{i}$ with $i \in \mathbb{I}_{\setminus j}$, where $\bm{e}_{i}$ is directed from $V_{A}$ to $V_{i}$. Then, it holds that $\frac{\partial}{\partial \bm{e}_{i}} \lambda_{j, j} = - 1$ and $\frac{\partial}{\partial \bm{e}_{i}} \lambda_{j, i'} = \delta_{i, i'}$ for $i \in \mathbb{I}_{\setminus j}$ with Kronecker’s delta $\delta$.
    
    For each $0 \le n \le \rho$, each multi-index $\bm{\gamma} \in \Sigma(\mathbb{I}_{\setminus j}, n)$, and each multi-index $\bm{\beta} \in \Sigma(\mathbb{I}_{d}, \rho)$, it holds that
    \begin{equation*}
        \begin{aligned}
            \bigg(\prod_{i \in \mathbb{I}_{\setminus j}} \frac{\partial^{\gamma_{i}}}{\partial \bm{e}_{i}^{\gamma_{i}}}\bigg) \llbracket\bm{\lambda}\rrbracket_{j}^{\bm{\beta}} \bigg|_{V_{A}} & = \bigg(\prod_{i \in \mathbb{I}_{\setminus j}} \frac{\partial^{\gamma_{i}}}{\partial \bm{e}_{i}^{\gamma_{i}}}\bigg) \Bigg(\frac{\lambda_{j, j}^{\beta_{j}}}{\beta_{j}!} \prod_{i \in \mathbb{I}_{\setminus j}} \frac{\lambda_{j, i}^{\beta_{i}}}{\beta_{i}!}\Bigg)\Bigg|_{V_{A}} \\
            & = \frac{(- 1)^{\beta_{j} - (\rho - n)}}{(\rho - n)!} \prod_{i \in \mathbb{I}_{\setminus j}} \binom{\gamma_{i}}{\beta_{i}}.
        \end{aligned}
    \end{equation*}
    Note that $\binom{\gamma_{i}}{\beta_{i}} = 0$ if $\gamma_{i} < \beta_{i}$.
    From the definition of $Q_{k}^{\rho}$ in \eqref{eq:Q-k-b-operator}, it follows that
    \begin{equation*}
        \begin{aligned}
            \bigg(\prod_{i \in \mathbb{I}_{\setminus j}} \frac{\partial^{\gamma_{i}}}{\partial \bm{e}_{i}^{\gamma_{i}}}\bigg) \big(Q_{k}^{\rho}(\llbracket\bm{\lambda}\rrbracket_{j}^{\bm{\beta}})|_{K_{j}}\big)\bigg|_{V_{A}} & = \bigg(\prod_{i \in \mathbb{I}_{\setminus j}} \frac{\partial^{\gamma_{i}}}{\partial \bm{e}_{i}^{\gamma_{i}}}\bigg) \Bigg(\frac{\lambda_{j, j}^{\beta_{j} + b}}{(\beta_{j} +b)!} \prod_{i \in \mathbb{I}_{\setminus j}} \frac{\lambda_{j, i}^{\beta_{i}}}{\beta_{i}!}\Bigg)\Bigg|_{V_{A}} \\
            & = \frac{(- 1)^{\beta_{j} + b - (k - n)}}{(k - n)!} \prod_{i \in \mathbb{I}_{\setminus j}} \binom{\gamma_{i}}{\beta_{i}} \\
            & = \frac{(- 1)^{\beta_{j} - (\rho - n)}}{(k - n)!} \prod_{i \in \mathbb{I}_{\setminus j}} \binom{\gamma_{i}}{\beta_{i}},
        \end{aligned}
    \end{equation*}
    which implies that for each $0 \le n \le \rho$ and each $p \in \mathcal{P}_{\rho}(K)$,
    \begin{equation}
        \label{eq:Q-k-b-derivative}
        \nabla^{n} \big(Q_{k}^{\rho}(p)|_{K_{j}}\big)\big|_{V_{A}} = \frac{(\rho - n)!}{(k - n)!} \nabla^{n} p|_{V_{A}}.
    \end{equation}
    From \eqref{eq:Q-k-b-operator} it is also easy to see that for each $0 \le n \le b - 1$ and each $p \in \mathcal{P}_{\rho}(K)$,
    \begin{equation*}
        \nabla^{n} \big(Q_{k}^{\rho}(p)|_{K_{j}}\big)\big|_{F_{\setminus j}} = 0.
    \end{equation*}
    Therefore, $Q_{k}^{\rho}(p) \in \mathcal{Q}_{k}^{\rho}(\mathcal{T}_{A}(K))$ for each $p \in \mathcal{P}_{\rho}(K)$, and $Q_{k}^{\rho}$ is injective. 
    
    Moreover, for each $u \in \mathcal{Q}_{k}^{\rho}(\mathcal{T}_{A}(K))$ and each $j \in \mathbb{I}_{d}$, from \eqref{eq:Q-k-b-operator}, there exists a unique $p_{j} \in \mathcal{P}_{\rho}(K)$ such that $Q_{k}^{\rho}(p_{j})|_{K_{j}} = u|_{K_{j}}$. For $j, j' \in \mathbb{I}_{d}$, it follows from \eqref{eq:Q-k-b-derivative} that $\nabla^{n} p_{j}|_{V_{A}} = \nabla^{n} p_{j'}|_{V_{A}}$ for $0 \le n \le \rho$. It implies that there exists a $p \in \mathcal{P}_{\rho}(K)$, such that $p_{j} = p$ for $j \in \mathbb{I}_{d}$, and $Q_{k}^{\rho}$ is surjective.
\end{proof}

The next lemma is proved in \cite{de1986b}.

\begin{lemma}
    \label{lem:continuity}
    Given a $t$-dimensional ($0 \le t \le d - 1$) subsimplex $F$ of $K$ and the polynomial $p \in \mathcal{P}_{l}(K)$, then $F$ is a subsimplex of $K_{j}$ for $j \in \mathbb{I}_{\setminus F}$. Consider the barycentric coordinate representation with respect to $K$ of $p$ such that
    \begin{equation*}
        p = \sum_{|\bm{\alpha}| = l} c_{\bm{\alpha}} \llbracket\bm{\lambda}\rrbracket^{\bm{\alpha}},
    \end{equation*}
    and the barycentric coordinate representation with respect to $K_{j}$ of $p$ such that
    \begin{equation*}
        p = \sum_{|\bm{\alpha}| = l} c_{j, \bm{\alpha}} \llbracket\bm{\lambda}\rrbracket_{j}^{\bm{\alpha}}.
    \end{equation*}
    Given $q \ge - 1$, the following conditions are equivalent:
    \begin{enumerate}
        \item $\nabla^{n} p|_{F} = 0$ for any $0 \le n \le q$;
        \item $c_{\bm{\alpha}} = 0$ for any $0 \le |\bm{\alpha}|_{\setminus F} \le q$;
        \item $c_{j, \bm{\alpha}} = 0$ for any $0 \le |\bm{\alpha}|_{j, \setminus F} \le q$.
    \end{enumerate}
\end{lemma}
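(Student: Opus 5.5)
The plan is to treat the equivalence of (1) and (2) as a statement about one polynomial on the single simplex $K$. The equivalence of (1) and (3) is then the same statement on $K_{j}$, since $F$ is a subsimplex of $K_{j}$ for $j \in \mathbb{I}_{\setminus F}$. So it suffices to prove: for a simplex $T$ with barycentric coordinates $\lambda_{0}, \ldots, \lambda_{d}$, a face $F$ of $T$, and $p = \sum_{|\bm{\alpha}| = l} c_{\bm{\alpha}} \llbracket\bm{\lambda}\rrbracket^{\bm{\alpha}}$, the vanishing of $\nabla^{n} p|_{F}$ for all $0 \le n \le q$ is equivalent to $c_{\bm{\alpha}} = 0$ whenever $|\bm{\alpha}|_{\setminus F} \le q$. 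The case $q = -1$ is vacuous. Applying this once with $T = K$ and once with $T = K_{j}$ gives (1)$\Leftrightarrow$(2) and (1)$\Leftrightarrow$(3). Here we use that $\mathbb{I}_{j, \setminus F} = \mathbb{I}_{\setminus F}$ because $V_{A}$ is not a vertex of $F$.

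For the direction (2)$\Rightarrow$(1), I would argue directly. Each surviving monomial has $|\bm{\alpha}|_{\setminus F} \ge q + 1$, so it contains a product of at least $q+1$ factors $\lambda_{i}$ with $i \in \mathbb{I}_{\setminus F}$. Each of these factors vanishes on $F$. By the Leibniz rule, any derivative of order $n \le q$ of such a product still contains at least one undifferentiated factor $\lambda_{i}$, $i \in \mathbb{I}_{\setminus F}$, in every term. Hence it vanishes on $F$.

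For (1)$\Rightarrow$(2), I would use induction on $m = 0, 1, \ldots, q$, showing $c_{\bm{\alpha}} = 0$ for all $\bm{\alpha}$ with $|\bm{\alpha}|_{\setminus F} = m$. Assume the claim holds for all levels below $m$. Fix $\bm{\theta} \in \Sigma(\mathbb{I}_{\setminus F}, m)$ and choose directions $\bm{e}_{i}$, $i \in \mathbb{I}_{\setminus F}$, from a fixed vertex of $F$ to $V_{i}$. Along these directions, $\partial_{\bm{e}_{i}} \lambda_{i'} = \delta_{i, i'}$ for $i' \in \mathbb{I}_{\setminus F}$, while the coordinates $\lambda_{i'}$ with $i' \in \mathbb{I}_{F}$ pick up only constant contributions. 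We apply $\prod_{i} \partial_{\bm{e}_{i}}^{\theta_{i}}$, which has order $m \le q$, and restrict to $F$. By the induction hypothesis only monomials with $|\bm{\alpha}|_{\setminus F} \ge m$ are present. Among these, only terms in which all $m$ derivatives fall on the $\lambda_{i}$, $i \in \mathbb{I}_{\setminus F}$, can survive on $F$, and this forces $\alpha_{i} = \theta_{i}$. The restriction then equals
\[
\sum_{\bm{\sigma} \in \Sigma(\mathbb{I}_{F}, l - m)} c_{(\bm{\sigma}, \bm{\theta})} \llbracket\bm{\lambda}\rrbracket_{F}^{\bm{\sigma}}.
\]
The Bernstein polynomials on $F$ are linearly independent, so vanishing of this derivative gives $c_{(\bm{\sigma}, \bm{\theta})} = 0$ for every such $\bm{\sigma}$. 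Running over all $\bm{\theta}$ completes level $m$.

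The main obstacle is the bookkeeping in the inductive step. Derivatives landing on the $\lambda_{i'}$ with $i' \in \mathbb{I}_{F}$ are constant multiples, but they lower $|\bm{\alpha}|_{F}$ and keep $|\bm{\alpha}|_{\setminus F}$ large. I would verify that such terms either vanish on $F$ or are already excluded by the induction hypothesis. The clean fact is that a term surviving on $F$ must have all of its $\lambda_{\setminus F}$-exponents exactly consumed by derivatives. With $m$ derivatives and $|\bm{\alpha}|_{\setminus F} \ge m$, this forces $|\bm{\alpha}|_{\setminus F} = m$ and no derivatives on the $F$-coordinates. Once this is checked, the argument is the standard one from \cite{de1986b}.
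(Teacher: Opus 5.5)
Your proof is correct. The reduction to a single simplex works for both $K$ and $K_{j}$ because $\mathbb{I}_{j,\setminus F} = \mathbb{I}_{\setminus F}$, the Leibniz argument gives (2)$\Rightarrow$(1), and the layer-by-layer induction with derivatives along edges from a vertex of $F$ gives (1)$\Rightarrow$(2), with the bookkeeping closing as you describe. The paper gives no argument of its own and simply cites \cite{de1986b}, so yours is a self-contained version of that standard argument, using the same edge-derivative computation the paper later uses in the proof of \Cref{prop:Q-k-b-bijection}.
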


With the help of the previous lemma, we obtain the following result.

\begin{lemma}
    \label{lem:continuity-equivalent}
    Given a $t$-dimensional ($0 \le t \le d - 1$) subsimplex $F$ of $K$, the polynomial $p \in \mathcal{P}_{\rho}(K)$, and $q \ge - 1$, the following conditions are equivalent:
    \begin{enumerate}
        \item $\nabla^{n} p|_{F} = 0$ for any $0 \le n \le q$;
        \item $\nabla^{n} (Q_{k}^{\rho}(p))|_{F} = 0$ for any $0 \le n \le q + b$. 
    \end{enumerate}
\end{lemma}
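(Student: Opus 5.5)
The plan is to reduce both conditions to statements about barycentric coefficients with respect to a single subsimplex $K_j$, using \Cref{lem:continuity}, and then compare them through the explicit form of $Q_{k}^{\rho}$ in \eqref{eq:Q-k-b-operator}.

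\emph{Reduction to the subsimplices $K_j$ with $j \in \mathbb{I}_{\setminus F}$.} The condition $\nabla^{n} (Q_{k}^{\rho}(p))|_{F} = 0$ is understood piecewise. The subsimplex $F \in \mathcal{T}^{\partial}(K)$ is contained only in those $K_j$ with $j \in \mathbb{I}_{\setminus F}$. So condition (2) means that $\nabla^{n}(Q_{k}^{\rho}(p)|_{K_j})|_F = 0$ for every such $j$ and every $0 \le n \le q+b$.

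\emph{Coefficient comparison on a fixed $K_j$.} Fix $j \in \mathbb{I}_{\setminus F}$ and expand
\begin{equation*}
    p = \sum_{|\bm\beta|=\rho} c_{j,\bm\beta} \llbracket\bm\lambda\rrbracket_j^{\bm\beta}.
\end{equation*}
By \eqref{eq:Q-k-b-operator},
\begin{equation*}
    Q_{k}^{\rho}(p)|_{K_j} = \sum_{|\bm\beta|=\rho} c_{j,\bm\beta} \llbracket\bm\lambda\rrbracket_j^{\bm\beta+b\bm e_j}.
\end{equation*}
The map $\bm\beta \mapsto \bm\alpha = \bm\beta + b\bm e_j$ is a bijection from $\Sigma(\mathbb{I}_d,\rho)$ onto $\{\bm\alpha \in \Sigma(\mathbb{I}_d,k) : \alpha_j \ge b\}$. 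All coefficients of indices with $\alpha_j < b$ vanish.

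Since $j \in \mathbb{I}_{j,\setminus F} = \mathbb{I}_{\setminus F}$, we have $|\bm\alpha|_{j,\setminus F} = |\bm\beta|_{j,\setminus F} + b$. Therefore:
\begin{itemize}
    \item the coefficients of $Q_{k}^{\rho}(p)|_{K_j}$ with $|\bm\alpha|_{j,\setminus F} \le q+b$ are exactly the coefficients $c_{j,\bm\beta}$ with $|\bm\beta|_{j,\setminus F} \le q$;
    \item every coefficient with $|\bm\alpha|_{j,\setminus F} \le b-1$ automatically has $\alpha_j \le b-1$, so it is zero.
\end{itemize}

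Now apply the equivalence (1)$\Leftrightarrow$(3) of \Cref{lem:continuity} twice. Applied to the polynomial $Q_{k}^{\rho}(p)|_{K_j} \in \mathcal{P}_k(K_j)$ with the parameter $q+b$, it shows that $\nabla^{n}(Q_{k}^{\rho}(p)|_{K_j})|_F = 0$ for $0 \le n \le q+b$ if and only if $c_{j,\bm\beta} = 0$ whenever $|\bm\beta|_{j,\setminus F} \le q$. Applied to $p$ itself with the parameter $q$, it shows the latter holds if and only if $\nabla^{n} p|_F = 0$ for $0 \le n \le q$.

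\emph{Conclusion.} Condition (1) does not depend on $j$, so (1) implies (2) by applying this argument to every $j \in \mathbb{I}_{\setminus F}$. For the converse, note that $\mathbb{I}_{\setminus F}$ is nonempty because $t \le d-1$. Picking any one such $j$ gives (2) $\Rightarrow$ (1). The case $q = -1$ is vacuous on the $p$ side and is handled by the automatic vanishing of the coefficients with $\alpha_j < b$.

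The main point needing care is the hypothesis of \Cref{lem:continuity}. It is stated for $p \in \mathcal{P}_l(K)$, but it is really a statement about a single simplex containing $F$ as a face. So it can be applied to the polynomial $Q_{k}^{\rho}(p)|_{K_j}$, extended from $K_j$ to all of $\mathbb{R}^d$, with respect to $K_j$ alone. This is legitimate because the equivalence (1)$\Leftrightarrow$(3) only involves the barycentric coordinates of $K_j$ and the fact that $F$ is a face of $K_j$. The other point to verify is the index identity $\mathbb{I}_{j,\setminus F} \ni j$, which holds because $V_{j,j} = V_A$ is not a vertex of $F$.
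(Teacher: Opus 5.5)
Your core argument matches the paper's. You expand $p$ in the barycentric coordinates of $K_j$ and note that $Q_{k}^{\rho}$ shifts indices by $b\bm{e}_j$. You then use $j \in \mathbb{I}_{j,\setminus F}$, which holds because $V_A$ is not a vertex of $F$, and apply (1)$\Leftrightarrow$(3) of \Cref{lem:continuity} to both $p$ and $Q_{k}^{\rho}(p)|_{K_j}$. The difference is in your reduction step, and under the paper's reading of condition (2) it leaves a case uncovered.

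The paper interprets $\nabla^{n}(Q_{k}^{\rho}(p))|_{F} = 0$ pointwise. It requires $\nabla^{n}(Q_{k}^{\rho}(p)|_{K_j})|_{E} = 0$ for every subsimplex $E$ of $F$ and every $K_j$ containing $E$. This matches how single-valuedness on $F$ is used in \eqref{eq:shape-function}. For $t \ge 1$, this includes the pieces $K_j$ with $j \in \mathbb{I}_{F}$. These pieces do not contain $F$, but they meet it along the face of $F$ opposite $V_j$. By restricting to $j \in \mathbb{I}_{\setminus F}$, you never show that these pieces have vanishing derivatives there. So your proof of (1)$\Rightarrow$(2) is incomplete under this reading. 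For $t = 0$ the two readings coincide.

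The repair is immediate, and it is exactly why the paper quantifies over all subsimplices $E$ of $F$. Condition (1) on $F$ implies $\nabla^{n} p|_{E} = 0$ for $0 \le n \le q$ on every face $E$ of $F$. Your coefficient comparison then applies verbatim to the pair $(E, K_j)$ for every $j \in \mathbb{I}_{\setminus E}$: since $V_A$ is not a vertex of $E$, we have $j \in \mathbb{I}_{j,\setminus E}$ and hence $|\bm{\alpha}|_{j,\setminus E} = |\bm{\beta}|_{j,\setminus E} + b$. Your direction (2)$\Rightarrow$(1), using a single $j \in \mathbb{I}_{\setminus F}$, is fine as it stands, and it uses only part of the hypothesis.
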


\begin{proof}
    Note that the polynomial $p \in \mathcal{P}_{\rho}(K)$ satisfies $\nabla^{n} p|_{F} = 0$, if and only if $\nabla^{n} p|_{E} = 0$ for any $t'$-dimensional ($0 \le t' \le t$) subsimplex $E$ of $F$. Similarly, the piecewise polynomial $Q_{k}^{\rho}(p)$ satisfies $\nabla^{n} p|_{F} = 0$, if and only if $\nabla^{n} (Q_{k}^{\rho}(p)|_{K_{j}})|_{E} = 0$ for any $t'$-dimensional ($0 \le t' \le t$) subsimplex $E$ of $F$, and any $K_{j}$, $j \in \mathbb{I}_{d}$ such that $E$ is a subsimplex of $K_{j}$.
    
    For each $t'$-dimensional ($0 \le t' \le t$) subsimplex $E$ of $F$, and each $K_{j}$, $j \in \mathbb{I}_{d}$ such that $E$ is a subsimplex of $K_{j}$, consider the barycentric coordinate representation with respect to $K_{j}$ of the polynomial $p \in \mathcal{P}_{\rho}(K)$ such that
    \begin{equation*}
        p = \sum_{|\bm{\beta}| = \rho} c_{\bm{\beta}} \llbracket\bm{\lambda}\rrbracket_{j}^{\bm{\beta}}.
    \end{equation*}
    Here the summation ranges over multi-indices $\bm{\beta} := (\beta_{0}, \beta_{1}, \ldots, \beta_{d}) \in \mathbb{N}_{0}^{d + 1}$ such that $|\bm{\beta}| := \sum_{i \in \mathbb{I}_{d}} \beta_{i} = \rho$. 
    From \Cref{lem:continuity}, it holds that $\nabla^{n} p|_{E} = 0$ for any $0 \le n \le q$, if and only if $c_{\bm{\beta}} = 0$ for any $|\bm{\beta}|_{j, \setminus E} \le q$.  Furthermore, consider the barycentric coordinate representation with respect to $K_{j}$ of the polynomial $Q_{k}^{\rho}(p)|_{K_{j}} \in \mathcal{P}_{k}(K_{j})$ such that
    \begin{equation*}
        Q_{k}^{\rho}(p)|_{K_{j}} = \sum_{|\bm{\alpha}| = k} c'_{\bm{\alpha}} \llbracket\bm{\lambda}\rrbracket_{j}^{\bm{\alpha}}.
    \end{equation*}
    Here the summation ranges over multi-indices $\bm{\alpha} := (\alpha_{0}, \alpha_{1}, \ldots, \alpha_{d}) \in \mathbb{N}_{0}^{d + 1}$ such that $|\bm{\alpha}| := \sum_{i \in \mathbb{I}_{d}} \alpha_{i} = k$. From the definition of $Q_{k}^{\rho}$ in \eqref{eq:Q-k-b-operator}, it follows that $c'_{\bm{\alpha}} = 0$ if $\alpha_{j} \le b - 1$, and $c'_{\bm{\alpha}} =c_{\bm{\beta}}$ for $\bm{\beta} = (\beta_{0}, \beta_{1}, \ldots, \beta_{d})$ such that $\alpha_{i} = \beta_{i} + b \cdot \delta_{j, i}$, with Kronecker’s delta $\delta$. Since $V_{A}$ is not a vertex of $E$, it holds that $j \in \mathbb{I}_{j, \setminus E}$, and $|\bm{\alpha}|_{j, \setminus E} = |\bm{\beta}|_{j, \setminus E} + b$. It yields that $c_{\bm{\beta}} = 0$ for any $|\bm{\beta}|_{j, \setminus E} \le q$, if and only if $c'_{\bm{\alpha}} = 0$ for any $|\bm{\alpha}|_{j, \setminus E} \le q + b$, which is equivalent to $\nabla^{n} (Q_{k}^{\rho}(p)|_{K_{j}})|_{E} = 0$ for any $0 \le n \le q + b$. 

    In conclusion, the polynomial $p \in \mathcal{P}_{\rho}(K)$ satisfies $\nabla^{n} p|_{F} = 0$ for any $0 \le n \le q$, if and only if the piecewise polynomial $Q_{k}^{\rho}(p)$ satisfies $\nabla^{n} p|_{F} = 0$ for any $0 \le n \le q + b$.
\end{proof}

Several corollaries follow from \Cref{lem:continuity-equivalent}. Among them, \Cref{coro:Q-alpha-beta} will be used to characterize the space $\mathcal{Q}^{\circ}$ defined below in \eqref{eq:Q-beta}. Two other corollaries, \Cref{coro:Q-beta-beta,coro:Q-beta-beta-bubble}, will be presented in \Cref{sec:unisolvence} and will be used to establish the unisolvence.

\begin{corollary}
    \label{coro:Q-alpha-beta}
        Suppose that $\bm{r} := (r_{1}, \ldots, r_{d})$, $\rho$, $k$, and $b$ jointly satisfy \Cref{asm:Cr-macro-element}. 
        Consider any $\bm{\beta} \in \Sigma^{\circ}$, for each $t$-dimensional ($0 \le t \le d - 2$) subsimplex $F$ of $K$, it holds that $\nabla^{n} Q_{k}^{\rho}(\llbracket\bm{\lambda}\rrbracket^{\bm{\beta}})|_{F} = 0$ for $0 \le n \le r_{d - t}$.
\end{corollary}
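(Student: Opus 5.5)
By \Cref{lem:continuity-equivalent}, applied with $q = r_{d-t} - b$ and $p = \llbracket\bm{\lambda}\rrbracket^{\bm{\beta}}$, the claim reduces to showing $\nabla^{n} \llbracket\bm{\lambda}\rrbracket^{\bm{\beta}}|_F = 0$ for $0 \le n \le r_{d-t} - b$. Note that $r_{d-t} - b \ge r_2 - b \ge r_1 - b \ge 0$, using $b \le r_1$ from the proof of \Cref{prop:asm}, so $q \ge -1$ is admissible. Expansion here is with respect to $K$, and $\llbracket\bm{\lambda}\rrbracket^{\bm{\beta}}$ is a single Bernstein monomial. By \Cref{lem:continuity} (equivalence of (1) and (2)), the vanishing holds exactly when $|\bm{\beta}|_{\setminus F} \ge r_{d-t} - b + 1$. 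So the whole proof amounts to verifying this inequality for every $\bm\beta \in \Sigma^\circ$ and every $t$-dimensional $F$ with $0 \le t \le d-2$.

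There are two cases, according to the decomposition \eqref{eq:beta-set}.

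First, if $\bm\beta \in \Sigma_0^{\bm r^\circ}(\mathbb I_d,\rho)$, the defining condition \eqref{eq:Sigma-0-d} gives $|\bm\beta|_{\setminus F} \ge r^\circ_{d-t}+1 = r_{d-t}-b+1$ for every subsimplex $F$ of dimension $t \le d-1$. In particular this covers $t \le d-2$.

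Second, suppose $\bm\beta \in \Sigma_{F_{\setminus j}, n-b}^{\bm r^\circ}(\mathbb I_d,\rho)$ with $b \le n \le r_1$. Definition \eqref{eq:Sigma-F-n-b} imposes $|\bm\beta|_{\setminus E} \ge r_{d-t'} - b + 1$ for every $t'$-dimensional $E$ with $0 \le t' \le d-2$. Here the quantifier ranges over all subsimplices $E$ of $K$, not only those contained in $F_{\setminus j}$. Hence the condition applies directly to our $F$ with $t' = t$.

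The only subtle point is this quantifier. If it were read as ranging over subsimplices of $F_{\setminus j}$ only, then for an $F$ containing $V_j$ we would need an extra argument. In that case $\mathbb I_{\setminus F}$ misses $j$. Choose a $t$-dimensional subsimplex $E \subset F_{\setminus j}$ with $\mathbb I_{\setminus E} \subseteq \mathbb I_{\setminus F} \cup \{j\}$; one can take $E$ to be $F$ with $V_j$ replaced by a vertex outside $F$. This gives $|\bm\beta|_{\setminus F} \ge |\bm\beta|_{\setminus E} - \beta_j$, which can lose up to $n-b$ and so fails in general. I therefore rely on the literal definition \eqref{eq:Sigma-F-n-b}, which quantifies over all subsimplices of $K$, consistent with the definition of $\Sigma^{\bar{\bm r}}_{F,n}$. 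With that reading, both cases combine with the reduction above to give the corollary.
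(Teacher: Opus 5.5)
Your proof is correct and is essentially the paper's own argument: the paper likewise reads off $|\bm{\beta}|_{\setminus F} \ge r_{d-t} - b + 1$ from \eqref{eq:Sigma-0-d} and \eqref{eq:Sigma-F-n-b} and concludes via \Cref{lem:continuity} and \Cref{lem:continuity-equivalent}. Your worry about a narrower reading of \eqref{eq:Sigma-F-n-b} is unfounded, and the two readings give the same set. If $V_j$ is a vertex of $F$ with $t \ge 1$, take the $(t-1)$-face $E = F \cap F_{\setminus j}$ and use $2 r^{\circ}_{d-t} \le r^{\circ}_{d-t+1}$ (from \Cref{prop:asm}) to get $|\bm{\beta}|_{\setminus F} = |\bm{\beta}|_{\setminus E} - \beta_j \ge r^{\circ}_{d-t+1} + 1 - r^{\circ}_{1} \ge r^{\circ}_{d-t} + 1$; the case $F = V_j$ follows from $\rho \ge 2 r^{\circ}_{d} + 1$.
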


\begin{proof}
    For each $t$-dimensional ($0 \le t \le d - 2$) subsimplex $F$ of $K$, from \eqref{eq:Sigma-F-n-b} and \eqref{eq:Sigma-0-d}, it holds that $|\bm{\beta}|_{\setminus F} \ge r^{\circ}_{d - t} + 1 = r_{d - t} - b + 1$. Hence, it follows that $\nabla^{n'} \llbracket\bm{\lambda}\rrbracket^{\bm{\beta}}|_{F} = 0$ for $0 \le n \le r_{d - t} - b$, which completes the proof by \Cref{lem:continuity-equivalent}.
\end{proof}

Now, we consider a specific subspace of $\mathcal{Q}_{k}^{\rho}(\mathcal{T}_{A}(K))$, corresponding to $\Sigma^{\circ}$, $\bm{r}^{\circ}$, and $\rho$ defined in \eqref{eq:beta-set}, such that
\begin{equation}
    \label{eq:Q-beta}
    \mathcal{Q}^{\circ} := \Span \big\{Q_{k}^{\rho}(\llbracket\bm{\lambda}\rrbracket^{\bm{\beta}}): \bm{\beta} \in \Sigma^{\circ}\big\}.
\end{equation}
The subspace $\mathcal{Q}^{\circ}$ is introduced to characterize the piecewise components of the superspline space $\mathcal{S}_{k}^{\bm{r}, \rho}(\mathcal{T}_{A}(K))$, rather than the globally polynomial part. The following proposition shows that 
if a piecewise polynomial in $\mathcal{Q}_{k}^{\rho}(\mathcal{T}_{A}(K))$ satisfies certain boundary conditions, then it necessarily belongs to $\mathcal{Q}^{\circ}$.

\begin{proposition}
    \label{prop:boundary-condition}
        Suppose that $\bm{r}$, $\rho$, $k$, and $b$ jointly satisfy \Cref{asm:Cr-macro-element}. 
        If the piecewise polynomial $u \in \mathcal{Q}_{k}^{\rho}(\mathcal{T}_{A}(K))$ satisfies that for each $t$-dimensional ($0 \le t \le d - 1$) subsimplex $F$ of $K$, $\nabla^{n} u|_{F} = 0$ for each $0 \le n \le r^{\partial}_{d - t}$, then it holds that the piecewise polynomial $u \in \mathcal{Q}^{\circ}$.
\end{proposition}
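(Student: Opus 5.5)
By \Cref{prop:Q-k-b-bijection}, every $u \in \mathcal{Q}_{k}^{\rho}(\mathcal{T}_{A}(K))$ can be written as $u = Q_{k}^{\rho}(p)$ for a unique $p \in \mathcal{P}_{\rho}(K)$. So the plan is to translate the boundary conditions on $u$ into vanishing conditions on the Bernstein coefficients of $p$ with respect to $K$, and then compare those conditions with the definition of $\Sigma^{\circ}$ in \eqref{eq:beta-set}. Write $p = \sum_{|\bm\beta| = \rho} c_{\bm\beta} \llbracket\bm\lambda\rrbracket^{\bm\beta}$. By linearity of $Q_k^\rho$, it suffices to show that $c_{\bm\beta} = 0$ whenever $\bm\beta \notin \Sigma^{\circ}$.

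\emph{Step 1: lower-dimensional faces, $0 \le t \le d-2$.} Here $r^{\partial}_{d-t} = r_{d-t}$, so the hypothesis reads $\nabla^n Q_k^\rho(p)|_F = 0$ for $0 \le n \le r_{d-t}$. Apply \Cref{lem:continuity-equivalent} with $q = r_{d-t} - b$; this is allowed since $q \ge r_2 - b \ge 2r_1 - r_2 - 1 \ge -1$, using $b \le r_2 - r_1 + 1$. It gives $\nabla^n p|_F = 0$ for $0 \le n \le r_{d-t} - b$. \Cref{lem:continuity} then yields $c_{\bm\beta} = 0$ whenever $|\bm\beta|_{\setminus F} \le r_{d-t} - b = r^{\circ}_{d-t}$.

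\emph{Step 2: facets, $t = d-1$.} Here $r^{\partial}_1 = b - 1$, and the hypothesis $\nabla^n u|_F = 0$ for $n \le b-1$ is already built into $\mathcal{Q}_k^\rho$. Applying \Cref{lem:continuity-equivalent} with $q = -1$ gives no information about $p$. Facet multi-indices are therefore unconstrained by the boundary data.

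\emph{Step 3: matching with $\Sigma^{\circ}$.} From Steps 1 and 2, $p$ lies in the span of the monomials $\llbracket\bm\lambda\rrbracket^{\bm\beta}$ with $|\bm\beta|_{\setminus F} \ge r^{\circ}_{d-t}+1$ for every $t$-dimensional face $F$ with $t \le d-2$. I will show this index set equals $\Sigma^{\circ}$, using \Cref{decomp:B} of $\Sigma(\mathbb I_d,\rho)$. Each such $\bm\beta$ falls into one of two cases.
\begin{itemize}
\item If it also satisfies the facet conditions $|\bm\beta|_{\setminus F_{\setminus j}} = \beta_j \ge r^{\circ}_1 + 1$ for all $j$, it lies in $\Sigma_0^{\bm r^\circ}(\mathbb I_d,\rho)$.
\item Otherwise some $\beta_j \le r^\circ_1 = r_1 - b$. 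All conditions on faces of dimension $\le d-2$ hold, which are exactly the conditions defining $\Sigma^{\bm r^\circ}_{F_{\setminus j}, \beta_j}(\mathbb I_d,\rho)$ in \eqref{eq:Sigma-F-n-b}. Writing $n = \beta_j + b \in [b, r_1]$, we get $\bm\beta \in \Sigma^{\bm r^\circ}_{F_{\setminus j}, n-b}$.
\end{itemize}
Conversely, any $\bm\beta$ lying in a set $\Sigma^{\bm r^\circ}_{E,n}$ with $E$ of dimension $\le d-2$ has $|\bm\beta|_{\setminus E} = n \le r^\circ_{d-t_E}$, so it is exactly one of the indices eliminated in Step 1. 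Hence $\{\bm\beta : c_{\bm\beta} \ne 0\} \subset \Sigma^{\circ}$, and $u = Q_k^\rho(p) \in \mathcal{Q}^{\circ}$.

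The main subtlety I expect is Step 3: verifying that the disjoint decomposition \eqref{eq:decompose-beta} really separates the lower-dimensional pieces from the facet pieces. In particular, a $\bm\beta$ with some small $\beta_j$ that fails no lower-dimensional condition must belong to the facet set $\Sigma^{\bm r^\circ}_{F_{\setminus j},\beta_j}$ rather than to some other piece. This should follow from the definition of $\Sigma_{F,n}$, whose constraints involve only faces of dimension less than $t_F$, together with the disjointness of the refined intrinsic decomposition, which holds by \Cref{prop:asm}(2). I would also double-check the borderline case $r^\circ_1 = 0$, i.e.\ $b = r_1$, where the facet sets $\Sigma^{\bm r^\circ}_{F_{\setminus j}, n-b}$ reduce to the single layer $n = b$.
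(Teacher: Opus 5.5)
Your proposal is correct and follows the paper's proof essentially step by step. You write $u = Q_{k}^{\rho}(p)$, apply \Cref{lem:continuity-equivalent} only on subsimplices of dimension at most $d-2$ to eliminate the Bernstein coefficients $c_{\bm{\beta}}$ with $|\bm{\beta}|_{\setminus F} \le r^{\circ}_{d-t}$, and observe that the surviving indices form exactly $\Sigma^{\circ}$. Your Step 3 and the check $q \ge -1$ simply make explicit what the paper reads off directly from \eqref{eq:beta-set}, and the disjointness of the decomposition is not needed for this inclusion.
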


\begin{proof}
    From \Cref{prop:Q-k-b-bijection}, there exists a polynomial $p \in \mathcal{P}_{\rho}(K)$ such that $u = Q_{k}^{\rho}(p)$, which satisfies that for each $t$-dimensional ($0 \le t \le d - 1$) subsimplex $F$ of $K$, $\nabla^{n} Q_{k}^{\rho}(p)|_{F} = \nabla^{n} u|_{F} = 0$ for $0 \le n \le r^{\partial}_{d - t}$. From \Cref{lem:continuity-equivalent}, for each $t$-dimensional ($0 \le t \le d - 2$) subsimplex $F$ of $K$, it holds that $\nabla^{n} p = 0$ for $0 \le n \le r^{\partial}_{d - t} - b = r_{d - t} - b = r^{\circ}_{d - t}$. Consider the barycentric coordinate representation with respect to $K$ of the polynomial $p \in \mathcal{P}_{\rho}(K)$ such that
    \begin{equation*}
        p = \sum_{|\bm{\beta}| = \rho} c_{\bm{\beta}} \llbracket\bm{\lambda}\rrbracket^{\bm{\beta}}.
    \end{equation*}
    Here the summation ranges over multi-indices $\bm{\beta} := (\beta_{0}, \beta_{1}, \ldots, \beta_{d}) \in \mathbb{N}_{0}^{d + 1}$ such that $|\bm{\beta}| := \sum_{i \in \mathbb{I}_{d}} \beta_{i} = \rho$. From \cite{de1986b}, for each $t$-dimensional ($0 \le t \le d - 2$) subsimplex $F$ of $K$, it holds that $c_{\bm{\beta}} = 0$ for each $|\bm{\beta}|_{\setminus F} \le r^{\circ}_{d - t}$.
    
    By the definition \eqref{eq:beta-set} of $\Sigma^{\circ}$, the multi-index $\bm{\beta} \in \Sigma^{\circ}$ if and only if for each $t$-dimensional ($0 \le t \le d - 2$) subsimplex $F$ of $K$, it holds that $|\bm{\beta}|_{\setminus F} \ge r^{\circ}_{d - t} + 1$. Therefore, it follows that $p = \sum_{\bm{\beta} \in \Sigma^{\circ}} c_{\bm{\beta}} \llbracket\bm{\lambda}\rrbracket^{\bm{\beta}}$.

    In conclusion, it holds that $u = Q_{k}^{\rho}(p) = Q_{k}^{\rho}\big(\sum_{\bm{\beta} \in \Sigma^{\circ}} c_{\bm{\beta}} \llbracket\bm{\lambda}\rrbracket^{\bm{\beta}}\big) = \sum_{\bm{\beta} \in \Sigma^{\circ}} c_{\bm{\beta}} Q_{k}^{\rho}(\llbracket\bm{\lambda}\rrbracket^{\bm{\beta}}) \in \mathcal{Q}^{\circ}$.
\end{proof}

\begin{proposition}
    \label{prop:subspace-Q-S}
        Suppose that $\bm{r}$, $\rho$, $k$, and $b$ jointly satisfy \Cref{asm:Cr-macro-element}. 
        The piecewise polynomial space $\mathcal{Q}^{\circ}$ is a subspace of $\mathcal{S}_{k}^{\bm{r}, \rho}(\mathcal{T}_{A}(K))$ defined in \eqref{eq:shape-function},
    \begin{equation}
        \label{eq:subspace-B-Q-S}
        \mathcal{Q}^{\circ} \subseteq \mathcal{S}_{k}^{\bm{r}, \rho}(\mathcal{T}_{A}(K)).
    \end{equation}
\end{proposition}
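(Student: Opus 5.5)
By linearity it suffices to show that each generator $u := Q_{k}^{\rho}(\llbracket\bm{\lambda}\rrbracket^{\bm{\beta}})$, with $\bm{\beta} \in \Sigma^{\circ}$, belongs to $\mathcal{S}_{k}^{\bm{r}, \rho}(\mathcal{T}_{A}(K))$. We check the defining conditions in \eqref{eq:shape-function} one at a time.

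\emph{Piecewise degree and the split point.} By \Cref{prop:Q-k-b-bijection}, $u \in \mathcal{Q}_{k}^{\rho}(\mathcal{T}_{A}(K))$. Hence $u|_{K_{j}} \in \mathcal{P}_{k}(K_{j})$ for each $j$, and $\nabla^{n} u$ is single-valued at $V_{A}$ for $0 \le n \le \rho$.

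\emph{Subsimplices of dimension at most $d-2$.} Let $F$ be a $t$-dimensional subsimplex with $0 \le t \le d-2$. By \Cref{coro:Q-alpha-beta}, $\nabla^{n} u|_{F} = 0$ for $0 \le n \le r_{d-t}$, computed from every $K_{j}$ containing $F$. In particular these derivatives are single-valued (all equal to zero).

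\emph{Global $C^{r_{1}}$ continuity.} The remaining task is to show $u \in C^{r_{1}}(K)$. Any two subsimplices $K_{j}, K_{j'}$ share the interior facet with vertices $V_{A}$ and $V_{i}$ for $i \ne j, j'$. We must show that the polynomials $u|_{K_{j}}$ and $u|_{K_{j'}}$ agree to order $r_{1}$ across this facet.

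Here we use the structure of $Q_{k}^{\rho}$. For $p = \llbracket\bm{\lambda}\rrbracket^{\bm{\beta}}$, the relation \eqref{eq:transformation} gives $\lambda_{j,j} = \lambda_{j}/\mu_{j}$. Therefore
\begin{equation*}
    u|_{K_{j}} = \frac{\rho!}{k!}\,\mu_{j}^{-b}\lambda_{j}^{b}\,\widetilde{p}_{j},
\end{equation*}
where $\widetilde{p}_{j}$ is obtained from the expansion of $p$ in $K_{j}$ by multiplying each term by $\lambda_{j,j}^{b}$ and adjusting the factorial normalization. This normalization is not uniform in the coefficients, so the identity cannot be used in this naive form.

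We therefore use a cleaner route: the operator $\frac{(k-n)!}{(\rho-n)!}$-scaling of \eqref{eq:Q-k-b-derivative}. Since the derivatives of all pieces at $V_{A}$ agree up to order $\rho$, and $\rho \ge r_{1}$ (indeed $\rho = k - b \ge 2r_{d}+1-r_{1} \ge r_{1}$), the jump of $u$ across an interior facet is a polynomial vanishing to order $\rho$ at $V_{A}$. By homogeneity it would have to have degree $> \rho$ in the direction away from $V_{A}$. This alone does not give $C^{r_{1}}$ along the whole facet, so we combine it with vanishing data on the boundary of the interior facet.

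Concretely, take the interior facet $G$ shared by $K_{j}$ and $K_{j'}$, and write the jump $w := u|_{K_{j}} - u|_{K_{j'}}$ as a polynomial of degree $k$. On the $(d-2)$-face $E = F_{\setminus j} \cap F_{\setminus j'}$ both pieces vanish to order $r_{2} \ge r_{1}$, by the previous step. Near $V_{A}$ both pieces agree to order $\rho$. The normal derivatives $\partial^{n}_{\bm{n}_{G}} w|_{G}$ for $n \le r_{1}$ are polynomials of degree $k-n$ on $G$, which is a cone over $E$ with apex $V_{A}$. Such a polynomial, vanishing to order $\rho - n$ at the apex and to order $r_{2}-n$ on the base $E$, is identically zero provided $(\rho-n) + (r_{2}-n) + 1 > k-n$, i.e. $r_{2} - n \ge b$. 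This holds for $n \le r_{1}$ because $b \le r_{2} - r_{1} + 1$ (with equality cases handled by the $+1$). This radial-degree count is the main obstacle: it needs care about the directional derivatives being tangential versus normal, and whether the order-$r_{2}$ vanishing on $E$ transfers to the normal derivatives restricted to $G$. I would verify it using the barycentric coordinates of $K_{j}$ restricted to $G$, where $\lambda_{j,j}$ plays the role of the apex coordinate.
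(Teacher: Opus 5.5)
Your final route is the paper's argument in different packaging. Both combine order-$\rho$ agreement of the pieces at $V_A$ (from \Cref{prop:Q-k-b-bijection}) with order-$r_2$ vanishing on the $(d-2)$-face $E$ (from \Cref{coro:Q-alpha-beta}) in a Bernstein-coefficient count. The paper does this count once, for the jump $u_j - u_{j'}$ in the Bernstein basis of $K_j$. You do it on the interior facet $G$ for each transversal derivative $\partial^n w/\partial \bm{n}_G^n|_G$. The two are equivalent, and your tangential-versus-normal worry is harmless. The data at $V_A$ and on $E$ are vanishing of full gradients of $w$, so they pass to $\partial^n w/\partial \bm{n}_G^n$ with orders lowered by $n$. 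Conversely, if these restrictions vanish on $G$ for all $n \le r_1$, then all mixed derivatives vanish there too, so $\nabla^n w|_G = 0$. The first two checks (piecewise degree and the split point, and faces of dimension at most $d-2$) also match the paper.

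However, the decisive counting step is off by one as you stated it. Write $q = \partial^n w/\partial \bm{n}_G^n|_G$, which has degree $k-n$, in the Bernstein basis of $G$, and let $\gamma_A$ be the exponent of the apex coordinate. Vanishing to order $\rho-n$ at $V_A$ kills the coefficients with $(k-n)-\gamma_A \le \rho-n$, that is, with $\gamma_A \ge b$. Vanishing to order $r_2-n$ on $E$ kills those with $\gamma_A \le r_2-n$. So all coefficients vanish if and only if $r_2 - n \ge b-1$. The correct criterion is therefore $(\rho-n)+(r_2-n)+1 \ge k-n$, not the strict inequality you wrote.

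Your strict version requires $r_2 - n \ge b$. The claim that this holds for all $n \le r_1$ because $b \le r_2 - r_1 + 1$ is false in the equality case $b = r_2 - r_1 + 1$. \Cref{asm:Cr-macro-element} allows this case, and it is exactly the paper's example $\bm{r} = (3,4)$, $b=2$. It is also the minimal odd case $r_1 = 2m-1$, $r_2 = 3m-2$, $b = m$ of Table~\ref{tab:minimum-degree}. There, at $n = r_1$, you would need $r_2 - r_1 \ge b$, which fails.

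The parenthetical about equality cases being handled by the $+1$ does not repair this. The explicit non-strict count does: it gives vanishing of $\nabla^n w|_G$ for all $n \le r_2 - b + 1$, exactly the paper's bound, and this covers $n \le r_1$. You should also drop the two abandoned attempts (the naive factorization through $\lambda_j^b$ and the homogeneity argument at $V_A$), since neither contributes to the proof.
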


\begin{proof}
    For each piecewise polynomial $u \in \mathcal{Q}^{\circ}$ and each $j \in \mathbb{I}_{d}$, define the polynomial $u_{j} \in \mathcal{P}_{\rho}(K)$ such that $u_{j}|_{K_{j}} = u|_{K_{j}}$. 
    
    For each $d$-dimensional simplex $K_{j} \in \mathcal{T}_{A}(K)$, $j \in \mathbb{I}_{d}$, and each $d$-dimensional simplex $K_{j'} \in \mathcal{T}_{A}(K)$ such that $j' \in \mathbb{I}_{\setminus j}$, let $F$ be the common $(d - 1)$-dimensional subsimplex of both $K_{j}$ and $K_{j'}$, such that $V_{A}$ is a vertex of $F$. Define $E$ as the $(d - 2)$-dimensional subsimplex of $F$ opposite to the vertex $V_{A}$. Then, from \Cref{coro:Q-alpha-beta}, it holds that for $0 \le n \le r_{2}$, $\nabla^{n} u|_{E} = \nabla^{n} u_{j}|_{E} = \nabla^{n} u_{j'}|_{E} = 0$. Moreover, from \Cref{prop:Q-k-b-bijection}, it follows that for $0 \le n \le \rho$, $\nabla^{n} u|_{V_{A}} = \nabla^{n} u_{j}|_{V_{A}} = \nabla^{n} u_{j'}|_{V_{A}}$.

    Let $u_{j, j'} = u_{j} - u_{j'}$, and consider the barycentric coordinate representation with respect to $K_{j}$ of the polynomial $u_{j, j'}$ such that
    \begin{equation*}
        u_{j, j'} = \sum_{|\bm{\alpha}| = k} c_{\bm{\alpha}} \llbracket\bm{\lambda}\rrbracket_{j}^{\alpha}.
    \end{equation*}
    Here the summation ranges over multi-indices $\bm{\alpha} := (\alpha_{0}, \alpha_{1}, \ldots, \alpha_{d}) \in \mathbb{N}_{0}^{d + 1}$ such that $|\bm{\alpha}| := \sum_{i \in \mathbb{I}_{d}} \alpha_{i} = k$. Since for $0 \le n \le r_{2}$, $\nabla^{n} u_{j, j'}|_{E} = 0$, and for $0 \le n \le \rho$, $\nabla^{n} u_{j, j'}|_{V_{A}} = 0$, $c_{\bm{\alpha}} = 0$ for each $\bm{\alpha}$ such that $|\bm{\alpha}|_{j, \setminus E} \le r_{2}$ or $|\bm{\alpha}|_{j, \setminus V_{A}} \le \rho$.

    For each $\bm{\alpha}$ such that $\alpha_{j'} \le r_{2} - b + 1$, if $\alpha_{j} \le b - 1$, it holds that $|\bm{\alpha}|_{j, \setminus E} = \alpha_{j} + \alpha_{j'} \le r_{2}$, and if $\alpha_{j} \ge b$, then it follows that $|\bm{\alpha}|_{j, \setminus V_{A}} = k - \alpha_{j} \le k - b = \rho$. Therefore, for each $\bm{\alpha}$ such that $|\bm{\alpha}|_{j, \setminus F} := \alpha_{j'} \le r_{2} - b + 1$, it holds that $c_{\bm{\alpha}} = 0$, which implies that for $0 \le n \le r_{2} - b + 1$, $\nabla^{n} u_{j, j'}|_{F} = 0$, then $\nabla^{n} u_{j}|_{F} = \nabla^{n} u_{j'}|_{F}$. Note that $r_{1} \le r_{2} - b + 1$, it follows that $u|_{K_{j} \cup K_{j'}} \in C^{r_{1}}(K_{j} \cup K_{j'})$.

    In conclusion, $\mathcal{Q}^{\circ} \subseteq C^{r_{1}}(K)$, together with \Cref{coro:Q-alpha-beta}, it holds that $\mathcal{Q}^{\circ} \subseteq \mathcal{S}_{k}^{\bm{r}, \rho}(\mathcal{T}_{A}(K))$.
\end{proof}

\begin{remark}
    Similarly, one can prove that for each $t$-dimensional $(1 \le t \le d - 1)$ subsimplex $F$ in $\mathcal{T}_{A}(K)$ such that $V_{A}$ is a vertex of $F$, $\nabla^{n} u|_{F}$ is single-valued for $0 \le n \le r_{d - t + 1} - b + 1$.
    
    Let $F$ be a common $t$-dimensional ($1 \le t \le d - 1$) subsimplex of both $K_{j}$ and $K_{j'}$, such that $V_{A}$ is a vertex of $F$. Define $E$ as the $(t - 1)$-dimensional subsimplex of $F$ opposite to the vertex $V_{A}$. Then, it holds that $c_{\bm{\alpha}} = 0$ for each $\bm{\alpha}$ such that $|\bm{\alpha}|_{j, \setminus E} \le r_{d - t + 1}$ or $|\bm{\alpha}|_{j, \setminus V_{A}} \le \rho$. 
    
    For each $\bm{\alpha}$ such that $|\bm{\alpha}|_{j, \setminus F} \le r_{d - t + 1} - b + 1$, if $\alpha_{j} \le b - 1$, it holds that $|\bm{\alpha}|_{j, \setminus E} = \alpha_{j} + |\bm{\alpha}|_{j, \setminus F} \le r_{d - t + 1}$, and if $\alpha_{j} \ge b$, then it follows that $|\bm{\alpha}|_{j, \setminus V_{A}} = k - \alpha_{j} \le k - b = \rho$. Therefore, for each $\bm{\alpha}$ such that $|\bm{\alpha}|_{j, \setminus F}$, it holds that $c_{\bm{\alpha}} = 0$, which implies that for $0 \le n \le r_{d - t + 1} - b + 1$,  $\nabla^{n} u_{j}|_{F} = \nabla^{n} u_{j'}|_{F}$.
\end{remark}

\section{Proof of Unisolvence}
\label{sec:unisolvence}

In this section, we show that the degrees of freedom expressed in \eqref{eq:equivalent-set-dofs} are linearly independent and we provide a basis for the shape function space $\mathcal{S}_{k}^{\bm{r}, \rho}(\mathcal{T}_{A}(K))$.

\begin{theorem}
    \label{thm:unsolvence}
    Suppose that the continuity vector $\bm{r} := (r_{1}, \ldots, r_{d}) \in \mathbb{N}_{0}^{d}$, the split point continuity $\rho \in \mathbb{N}_{0}$, the polynomial degree $k \in \mathbb{N}_{0}$, and the boundary layer number $b \in \mathbb{N}_{0}$ jointly satisfy \Cref{asm:Cr-macro-element}. 
        Let $\varphi_{\bm{\alpha}}$ and $\psi_{\bm{\beta}}$ be the degrees of freedom as in \Cref{prop:equivalent-set-dofs}.
        For $u \in \mathcal{S}_{k}^{\bm{r}, \rho}(\mathcal{T}_{A}(K))$, if $\varphi_{\bm{\alpha}}(u) = 0$ for all $\bm{\alpha} \in \Sigma^{\partial}$ and $\psi_{\bm{\beta}}(u) = 0$ for all $\bm{\beta} \in \Sigma^{\circ}$, then $u = 0$.
    
    Moreover, the shape function space $\mathcal{S}_{k}^{\bm{r}, \rho}(\mathcal{T}_{A}(K))$ has a basis
    \begin{equation}\label{eq:basis}
        \big\{\llbracket\bm{\lambda}\rrbracket^{\bm{\alpha}}: \bm{\alpha} \in \Sigma^{\partial}\big\} \cup \big\{Q_{k}^{\rho}(\llbracket\bm{\lambda}\rrbracket^{\bm{\beta}}): \bm{\beta} \in \Sigma^{\circ}\big\}.
    \end{equation}
\end{theorem}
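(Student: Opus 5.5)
The argument has two stages. First the $\varphi$-conditions reduce $u$ to an element of $\mathcal{Q}^{\circ}$; then the $\psi$-conditions kill that element. A dimension count afterwards gives the basis statement.

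\emph{Step 1: the $\varphi$-conditions.} Let $u \in \mathcal{S}_{k}^{\bm{r}, \rho}(\mathcal{T}_{A}(K))$ with all $\varphi_{\bm{\alpha}}(u) = 0$. I will show by induction on the dimension $t$ of $F$ that $\nabla^{n} u|_{F} = 0$ for $0 \le n \le r^{\partial}_{d - t}$.
\begin{itemize}
\item For $t \le d-2$, the function $\nabla^{n} u$ is single-valued on $F$ for $n \le r_{d-t}$. Hence $\nabla^{n} u|_F$ is a polynomial trace of degree $k-n$ on $F$.
\item The inductive hypothesis makes it vanish to order $r_{d-t'} - n$ on each lower-dimensional face $E$ of $F$.
\item Its Bernstein coefficients are therefore supported on $\Sigma_0^{\bm{q}_{t,n}}(\mathbb{I}_F, k-n)$. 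The moment conditions \eqref{eq:alpha-dof} against exactly these $\llbracket\bm{\lambda}\rrbracket_F^{\bm{\sigma}}$ force the trace to vanish (a positive definite Gram matrix argument).
\item For facets $F = F_{\setminus j}$ with $n \le b - 1 \le r_1$, the same argument applies using $C^{r_1}$ continuity. This also holds for the normal derivative $\partial^n/\partial \bm{n}_F^n$.
\end{itemize}
This is exactly the classical unisolvence argument of \cite{hu2024construction} for $\bm{r}^{\partial}$ and $k$, which is valid by \Cref{prop:asm}(1). Since only $u|_{K_j}$ touches $F_{\setminus j}$, all derivatives of $u|_{K_j}$ up to order $b-1$ then vanish on $F_{\setminus j}$. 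Together with the $C^\rho$ condition at $V_A$, this gives $u \in \mathcal{Q}_{k}^{\rho}(\mathcal{T}_{A}(K))$. \Cref{prop:boundary-condition} then yields $u \in \mathcal{Q}^{\circ}$, i.e. $u = Q_k^\rho(p)$ with $p = \sum_{\bm{\beta}\in\Sigma^\circ} c_{\bm{\beta}} \llbracket\bm{\lambda}\rrbracket^{\bm{\beta}}$.

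\emph{Step 2: the $\psi$-conditions.} Here I induct over the layers $n = b, \ldots, r_1$ on each facet $F_{\setminus j}$. This is the main obstacle: one needs a formula for $\partial^n_{\bm{n}_F} Q_k^\rho(p)|_{F}$ in terms of $p$ (the Corollary \Cref{coro:Q-beta-beta} mentioned earlier).
\begin{itemize}
\item On $K_j$, with $\lambda_{j,j} = \lambda_j/\mu_j$, the $n$-th normal derivative of $Q_k^\rho(\llbracket\bm{\lambda}\rrbracket_j^{\bm{\beta}})$ on $F_{\setminus j}$ is nonzero only if $\beta_j + b \le n$. It equals a fixed nonzero constant times the $(n-b)$-th normal derivative of $\llbracket\bm{\lambda}\rrbracket_j^{\bm{\beta}}$, plus lower-order terms in the $K_j$ expansion.
\item Assume that $\partial^{m}_{\bm{n}}p$ vanishes on $F$ for $m < n-b$ (equivalently, by \Cref{lem:continuity-equivalent}, $\nabla^{m} Q_k^\rho(p)|_F=0$ for $m<n$). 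Then $\partial^n_{\bm{n}_F} Q_k^\rho(p)|_F = c\,\partial^{n-b}_{\bm{n}_F} p|_F$ with $c \neq 0$.
\item The trace $\partial^{n-b}_{\bm{n}_F} p|_F$ vanishes to order $r_{d-t'} - n$ on lower faces, because $p$ is supported on $\Sigma^\circ$. So its coefficients lie in $\Sigma_0^{\bm{q}_{d-1,n}}(\mathbb{I}_F, k-n)$, matching $\mathcal{R}^\circ_b$.
\item The moment conditions $\psi_{\bm{\beta}}(u)=0$ then kill it.
\end{itemize}
After all layers, $p$ vanishes to order $r_1 - b$ on every facet, so $p \in \Span\{\llbracket\bm{\lambda}\rrbracket^{\bm{\beta}}: \bm{\beta} \in \Sigma_0^{\bm{r}^\circ}(\mathbb{I}_d,\rho)\}$. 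For these I expect $Q_k^\rho(\llbracket\bm{\lambda}\rrbracket^{\bm{\beta}})$ to be $\lambda_j^b$ times a positive multiple (via the substitution $\lambda_{j,j}=\lambda_j/\mu_j$) of something closely related to $\llbracket\bm{\lambda}\rrbracket^{\bm{\beta}}$. I would try to show that the bilinear form $(p,p') \mapsto \int_K Q_k^\rho(p)\, Q_k^\rho(p')$ is positive definite on this span (it is an $L^2$ inner product, so this is immediate once $u=Q_k^\rho(p)$ is tested against $Q_k^\rho$ of the same space). Hence $\psi_{\bm{\beta}}(u)=0$ for $\bm{\beta} \in \Sigma_0^{\bm{r}^\circ}$ gives $\int_K u^2 = 0$, so $u = 0$.

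\emph{Step 3: the basis.} Each $\llbracket\bm{\lambda}\rrbracket^{\bm{\alpha}}$ with $\bm{\alpha}\in\Sigma^\partial$ is a global polynomial. It lies in $C^{r_1}$, and it is smooth at $V_A$ and across subsimplices, so it belongs to $\mathcal{S}_{k}^{\bm{r}, \rho}(\mathcal{T}_{A}(K))$; \Cref{prop:subspace-Q-S} covers the $Q$-part. The number of functions equals $\card\Sigma^\partial + \card\Sigma^\circ$, which is the number of degrees of freedom by \Cref{prop:equivalent-set-dofs}.

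It remains to show linear independence. Suppose a combination vanishes. Evaluating the $\varphi$ functionals in the order used for \cite{hu2024construction}'s triangular duality shows the $\bm{\alpha}$-coefficients vanish, since $\mathcal{Q}^\circ$ has zero $\varphi$ data by Step 1's vanishing properties. The $\bm{\beta}$-coefficients then vanish by injectivity of $Q_k^\rho$ (\Cref{prop:Q-k-b-bijection}).

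Unisolvence gives $\dim \mathcal{S}_{k}^{\bm{r}, \rho}(\mathcal{T}_{A}(K)) \le \#\text{dofs}$. The independent family gives $\ge$. Hence the family \eqref{eq:basis} is a basis.
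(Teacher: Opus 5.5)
Your argument is correct, but it is organized differently from the paper's. The paper first shows that the family \eqref{eq:basis} is linearly independent. It then bounds $\dim \mathcal{S}_{k}^{\bm{r}, \rho}(\mathcal{T}_{A}(K))$ from above through $\mathcal{S}^{\circ} \subseteq \mathcal{Q}^{\circ}$, citing \cite[Proposition A.2]{hu2025sharpness} and \Cref{prop:boundary-condition}, which gives the direct sum $\mathcal{P}^{\partial} \oplus \mathcal{Q}^{\circ}$. Unisolvence is then read off by writing $u = u^{\partial} + u^{\circ}$ and invoking \Cref{prop:sub-unsolvence-alpha} and \Cref{prop:sub-unsolvence-beta}.

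You prove unisolvence first and count afterwards. Your Step 1 is the same reduction to $\mathcal{Q}^{\circ}$, with the Gram-matrix induction written out rather than cited. The real difference is Step 2. The paper proves \Cref{prop:sub-unsolvence-beta} by splitting $u$ over $\widehat{\mathcal{B}}$ and the blocks $\mathcal{Q}_{j, n}$, and uses the triangular vanishing pattern of \Cref{coro:Q-beta-beta,coro:Q-beta-beta-bubble}, including the cross-facet vanishing in part (2) of \Cref{coro:Q-beta-beta}. You instead work with the single preimage $p$ and induct on its vanishing order at each facet separately. This buys two things: no cross-facet information is needed, and unisolvence itself no longer depends on \Cref{prop:sub-unsolvence-alpha}, which you use only for linear independence in Step 3. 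In exchange, the paper's route makes the block-triangular pairing between the functionals and the basis \eqref{eq:basis} explicit.

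Two points of precision. In the first bullet of Step 2, the factor relating $\partial_{\bm{n}_F}^{n} \llbracket\bm{\lambda}\rrbracket_{j}^{\bm{\beta} + b\bm{e}_j}|_{F}$ to $\partial_{\bm{n}_F}^{n-b} \llbracket\bm{\lambda}\rrbracket_{j}^{\bm{\beta}}|_{F}$ is $\frac{n!\,\beta_j!}{(\beta_j+b)!\,(n-b)!}\,\bar{c}_j^{\,b}$, with $\bar{c}_j = \partial \lambda_{j,j}/\partial \bm{n}_F$. This is a fixed constant only when $\beta_j = n-b$. That is exactly why your inductive hypothesis, which removes all terms with $\beta_j < n-b$, is needed; with it in place, your second bullet is exact. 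In Step 3, the vanishing of the $\varphi$-data on $\mathcal{Q}^{\circ}$ does not follow from Step 1, which proves the converse direction. It comes from \Cref{coro:Q-alpha-beta} together with the factor $\lambda_{j,j}^{b}$ in $Q_k^\rho(p)|_{K_j}$, which handles the facet functionals with $n \le b-1$.
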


The remainder of this section is devoted to the proof of \Cref{thm:unsolvence}. Note that the basis in \eqref{eq:basis} consists of a set of polynomials (related to the multi-index set $\Sigma^{\partial}$), enriched with a set of splines, i.e., piecewise polynomials (related to the multi-index set $\Sigma^{\circ}$).
There is a similarity with the basis construction in \cite{lyche2025} for $C^1$ splines on the Alfeld split in $d$ dimensions, where a subset of Bernstein basis polynomials (related to the boundary) is enriched with so-called simplex splines (in the interior).

The following corollaries follow directly from \Cref{lem:continuity-equivalent} and show that certain derivatives of $Q_{k}^{\rho}(\llbracket\bm{\lambda}\rrbracket^{\bm{\beta}})$ vanish on the subsimplices $F$ of $K$ for $\bm{\beta} \in \Sigma^{\circ}$.

\begin{corollary}
    \label{coro:Q-beta-beta}
    Suppose that $\bm{r} := (r_{1}, \ldots, r_{d})$, $\rho$, $k$, and $b$ jointly satisfy \Cref{asm:Cr-macro-element}. 
    Consider any $(d - 1)$-dimensional subsimplex $F_{\setminus j}$ of $K$, any $n$ such that $b \le n \le r_{1}$, and any $\bm{\beta} \in \Sigma_{F_{\setminus j}, n - b}(\mathbb{I}_{d}, \rho)$. 
    \begin{enumerate}
        \item  It holds that $\nabla^{n'} Q_{k}^{\rho}(\llbracket\bm{\lambda}\rrbracket^{\bm{\beta}})|_{F_{\setminus j}} = 0$ for $0 \le n' \le n - 1$. 
        \item Furthermore, for each $(d - 1)$-dimensional subsimplex $F_{\setminus j'}$ of $K$ such that $j' \in \mathbb{I}_{\setminus j}$, it holds that $\nabla^{n'} Q_{k}^{\rho}(\llbracket\bm{\lambda}\rrbracket^{\bm{\beta}})|_{F_{\setminus j'}} = 0$ for $0 \le n' \le r_{1}$.
    \end{enumerate}
\end{corollary}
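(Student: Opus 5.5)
Both parts follow from \Cref{lem:continuity-equivalent}, applied to $p = \llbracket\bm{\lambda}\rrbracket^{\bm{\beta}} \in \mathcal{P}_{\rho}(K)$. The only work is to read off, from the index conditions in \eqref{eq:Sigma-F-n-b}, the vanishing orders of $p$ on the relevant facets of $K$. These orders come from \Cref{lem:continuity}: a single Bernstein monomial $\llbracket\bm{\lambda}\rrbracket^{\bm{\beta}}$ satisfies $\nabla^{m} p|_{F} = 0$ for all $0 \le m \le q$ if and only if $|\bm{\beta}|_{\setminus F} \ge q + 1$. For a facet $F_{\setminus i}$ we have $|\bm{\beta}|_{\setminus F_{\setminus i}} = \beta_{i}$, so the monomial vanishes to order exactly $\beta_{i} - 1$ there.

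For part (1), take $\bm{\beta} \in \Sigma_{F_{\setminus j}, n-b}^{\bm{r}^{\circ}}(\mathbb{I}_{d}, \rho)$. Then $\beta_{j} = |\bm{\beta}|_{\setminus F_{\setminus j}} = n - b$, so $\nabla^{m} p|_{F_{\setminus j}} = 0$ for $0 \le m \le n - b - 1$. If $n = b$, this is the vacuous case $q = -1$, which the lemma allows. Applying \Cref{lem:continuity-equivalent} with $F = F_{\setminus j}$ and $q = n - b - 1$ gives $\nabla^{n'} Q_{k}^{\rho}(p)|_{F_{\setminus j}} = 0$ for $0 \le n' \le n - 1$.

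For part (2), fix $j' \ne j$. Let $E$ be the $(d-2)$-dimensional subsimplex $F_{\setminus j} \cap F_{\setminus j'}$, so that $\mathbb{I}_{\setminus E} = \{j, j'\}$. The defining condition of \eqref{eq:Sigma-F-n-b}, applied to this $E$ with $t' = d-2$, gives
\[
  \beta_{j} + \beta_{j'} \ge r_{2} - b + 1.
\]
Substituting $\beta_{j} = n - b$ and using $n \le r_{1}$ yields
\[
  \beta_{j'} \ge r_{2} - n + 1 \ge r_{2} - r_{1} + 1 \ge b,
\]
where the last inequality is exactly the assumption $b \le r_{2} - r_{1} + 1$ in \Cref{asm:Cr-macro-element}. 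Hence $\nabla^{m} p|_{F_{\setminus j'}} = 0$ for $0 \le m \le \beta_{j'} - 1$, and in particular for $0 \le m \le r_{2} - r_{1}$.

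\Cref{lem:continuity-equivalent} with $q = r_{2} - r_{1}$ then gives vanishing of $Q_{k}^{\rho}(p)$ on $F_{\setminus j'}$ up to order $r_{2} - r_{1} + b$. It remains to check that this order is at least $r_{1}$, i.e.\ $r_{2} + b \ge 2 r_{1}$. This is the other half of the constraint on $b$, namely $b \ge 2 r_{1} - r_{2}$.

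Both parts are therefore direct. The only delicate point is part (2): the bound on $\beta_{j'}$ must be extracted from the $(d-2)$-face condition rather than from the facet condition. After that, the two inequalities $2r_1 - r_2 \le b \le r_2 - r_1 + 1$ in \Cref{asm:Cr-macro-element} are used, one in each step above, and both are sharp.
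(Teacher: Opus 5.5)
Your proof is correct and follows essentially the same route as the paper's: part (1) reads off $\beta_j = n-b$, and part (2) extracts $\beta_{j'} \ge r_2 - r_1 + 1$ from the $(d-2)$-face condition on $E = F_{\setminus j}\cap F_{\setminus j'}$, then concludes via \Cref{lem:continuity-equivalent} using $b \ge 2r_1 - r_2$. One correction to your closing remark: the bound $\beta_{j'} \ge b$ (from $b \le r_2 - r_1 + 1$) is never used afterwards, so only the lower bound on $b$ enters this corollary, not both.
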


\begin{proof}
    For each $(d - 1)$-dimensional subsimplex $F_{\setminus j'}$ of $K$, $j \in \mathbb{I}_{d}$, each $n$ such that $b \le n \le r_{1}$, and each $\bm{\beta} \in \Sigma_{F_{\setminus j}, n - b}(\mathbb{I}_{d}, \rho)$, from \eqref{eq:Sigma-F-n-b}, it holds that $|\bm{\beta}|_{\setminus F_{\setminus j}} = \beta_{j} = n - b$. Hence, it follows that $\nabla^{n'} \llbracket\bm{\lambda}\rrbracket^{\bm{\beta}}|_{F_{\setminus j}} = 0$ for $0 \le n' \le n - b - 1$. 
    
    Moreover, it holds that $|\bm{\beta}|_{\setminus E} \ge r^{\circ}_{2} + 1 = r_{2} - b + 1$ for each $(d - 2)$-dimensional subsimplex $E$ of $K$. For each $(d - 1)$-dimensional subsimplex $F_{\setminus j'}$ of $K$ such that $j' \in \mathbb{I}_{\setminus j}$, let $E$ be the common $(d - 2)$-dimensional subsimplex of both $F_{\setminus j}$ and $F_{\setminus j'}$, then $\mathbb{I}_{E} = \{j, j'\}$ and $|\bm{\beta}|_{\setminus F_{\setminus j'}} = \beta_{j'} = |\bm{\beta}|_{\setminus E} - |\bm{\beta}|_{\setminus F_{\setminus j}} \ge (r_{2} - b + 1) - (n - b) \ge r_{2} - r_{1} + 1 = r_{1} - b + 1$. Therefore, it follows that $\nabla^{n'} \llbracket\bm{\lambda}\rrbracket^{\bm{\beta}}|_{F_{\setminus j'}} = 0$ for $0 \le n' \le r_{1} - b$.
    
    This completes the proof by \Cref{lem:continuity-equivalent}.
\end{proof}

\begin{corollary}
    \label{coro:Q-beta-beta-bubble}
    Suppose that $\bm{r} := (r_{1}, \ldots, r_{d})$, $\rho$, $k$, and $b$ jointly satisfy \Cref{asm:Cr-macro-element}. 
    Consider any $\bm{\beta} \in \Sigma_{0}^{\bm{r}^{\circ}}(\mathbb{I}_{d}, \rho)$, for each $(d - 1)$-dimensional subsimplex $F$ of $K$, it holds that $\nabla^{n} Q_{k}^{\rho}(\llbracket\bm{\lambda}\rrbracket^{\bm{\beta}})|_{F} = 0$ for $0 \le n \le r_{1}$. 
\end{corollary}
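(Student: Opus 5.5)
The plan is to reduce the statement to a condition on the coefficients of $\llbracket\bm{\lambda}\rrbracket^{\bm{\beta}}$, and then transfer it to $Q_{k}^{\rho}(\llbracket\bm{\lambda}\rrbracket^{\bm{\beta}})$ via \Cref{lem:continuity-equivalent}, exactly as in the proofs of \Cref{coro:Q-alpha-beta} and \Cref{coro:Q-beta-beta}. Fix $\bm{\beta} \in \Sigma_{0}^{\bm{r}^{\circ}}(\mathbb{I}_{d}, \rho)$ and a facet $F = F_{\setminus j}$ of $K$, so that $\mathbb{I}_{\setminus F} = \{j\}$ and $|\bm{\beta}|_{\setminus F} = \beta_{j}$. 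By \Cref{lem:continuity-equivalent} with $q = r_{1} - b$, it suffices to show that $\nabla^{n} \llbracket\bm{\lambda}\rrbracket^{\bm{\beta}}|_{F} = 0$ for $0 \le n \le r_{1} - b$. By \Cref{lem:continuity} applied to a single monomial, this holds precisely when $\beta_{j} \ge r_{1} - b + 1$.

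The key step is to read this inequality off the definition \eqref{eq:Sigma-0-d}. Take $t = d - 1$ and the $(d-1)$-dimensional subsimplex $F$ itself. The defining condition of $\Sigma_{0}^{\bm{r}^{\circ}}(\mathbb{I}_{d}, \rho)$ gives $|\bm{\beta}|_{\setminus F} \ge r^{\circ}_{1} + 1 = r_{1} - b + 1$. Hence $\beta_{j} \ge r_{1} - b + 1$, so every derivative of $\llbracket\bm{\lambda}\rrbracket^{\bm{\beta}}$ of order at most $r_{1} - b$ vanishes on $F$. Here $r_{1} - b \ge 0$, since $b \le r_{1}$ as shown in the proof of \Cref{prop:asm}, so $q \ge -1$ is admissible.

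Applying \Cref{lem:continuity-equivalent} with $q = r_{1} - b$ then yields $\nabla^{n} Q_{k}^{\rho}(\llbracket\bm{\lambda}\rrbracket^{\bm{\beta}})|_{F} = 0$ for $0 \le n \le q + b = r_{1}$, which is the claim. Since $F$ was an arbitrary facet, the statement holds for all $(d-1)$-dimensional subsimplices.

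I expect no real obstacle. The only point needing care is that the constraint in \eqref{eq:Sigma-0-d} indeed includes $t = d-1$, that is, facets. This distinguishes $\Sigma_{0}^{\bm{r}^{\circ}}$ from the sets $\Sigma_{F, n-b}^{\bm{r}^{\circ}}$ in \Cref{coro:Q-beta-beta}, where $\beta_{j}$ is only $n - b$ on the distinguished facet. The second check is that $Q_{k}^{\rho}$ of a polynomial written in $K$-barycentric monomials is handled correctly by \Cref{lem:continuity-equivalent}. That lemma is stated for an arbitrary $p \in \mathcal{P}_{\rho}(K)$, so it applies directly to $\llbracket\bm{\lambda}\rrbracket^{\bm{\beta}}$.
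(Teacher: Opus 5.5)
Your proposal is correct and follows the paper's own proof: you read $|\bm{\beta}|_{\setminus F} \ge r^{\circ}_{1} + 1 = r_{1} - b + 1$ off \eqref{eq:Sigma-0-d} with $t = d-1$, deduce that $\nabla^{n}\llbracket\bm{\lambda}\rrbracket^{\bm{\beta}}|_{F} = 0$ for $0 \le n \le r_{1} - b$, and then apply \Cref{lem:continuity-equivalent} with $q = r_{1} - b$. Your extra check that $b \le r_{1}$, so that $q \ge -1$, is harmless. The paper leaves that check implicit.
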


\begin{proof}
    For each $\bm{\beta} \in \Sigma_{0}^{\bm{r}^{\circ}}(\mathbb{I}_{d}, \rho)$, and each $(d - 1)$-dimensional subsimplex $F$ of $K$, from \eqref{eq:Sigma-0-d}, it holds that $|\bm{\beta}|_{\setminus F} \ge r^{\circ}_{1} + 1 = r_{1} - b + 1$. Then, it follows that $\nabla^{n} \llbracket\bm{\lambda}\rrbracket^{\bm{\beta}}|_{F} = 0$ for $0 \le n \le r_{1} - b$, which completes the proof by \Cref{lem:continuity-equivalent}.
\end{proof}

First, consider a specific subspace of $\mathcal{P}_{k}(K)$, corresponding to $\Sigma^{\partial}$, $\bm{r}^{\partial}$, and $k$ defined in \eqref{eq:alpha-set}, such that
\begin{equation*}
    \mathcal{P}^{\partial} := \Span \big\{\llbracket\bm{\lambda}\rrbracket^{\bm{\alpha}}: \bm{\alpha} \in \Sigma^{\partial}\big\}.
\end{equation*}
The following proposition indicates that the set of degrees of freedom $\{\varphi_{\bm{\alpha}}: \bm{\alpha} \in \Sigma^{\partial}\}$ is unisolvent for the shape function space $\mathcal{P}^{\partial}$. Since $\bm{r}^{\partial}$ and $k$ jointly satisfy \Cref{asm:Cr-element}, the proof follows the same argument as in \cite[Proposition 5.4]{hu2024construction}, which relies on a partial order of the pair $(F, n)$.

\begin{proposition}
    \label{prop:sub-unsolvence-alpha}
        Suppose that $\bm{r}$, $\rho$, $k$, and $b$ jointly satisfy \Cref{asm:Cr-macro-element}. 
        For $u \in \mathcal{P}^{\partial}$, if $\varphi_{\bm{\alpha}}(u) = 0$ for all $\bm{\alpha} \in \Sigma^{\partial}$, then $u = 0$.
\end{proposition}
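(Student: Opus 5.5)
The plan is to reproduce the triangular (partial-order) argument of \cite[Proposition 5.4]{hu2024construction} for the refined intrinsic decomposition of \Cref{decomp:A}. By \Cref{prop:asm}, $\bm{r}^{\partial}$ and $k$ jointly satisfy \Cref{asm:Cr-element}, so the decomposition \eqref{eq:decompose-full} and the bijections of \Cref{lem:bij-lemma} are available with $\bar{\bm r}=\bm r^{\partial}$, $\bar k = k$.

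We write $u=\sum_{\bm{\alpha}\in\Sigma^{\partial}} c_{\bm{\alpha}}\llbracket\bm{\lambda}\rrbracket^{\bm{\alpha}}$ and order the pairs $(F,n)$, with $F\in\mathcal{T}^{\partial}(K)$ and $0\le n\le r^{\partial}_{d-t_F}$. The order is by dimension $t_F$ first (vertices, then edges, and so on up to facets), and within a fixed $F$ by increasing $n$. We then show by induction along this order that $c_{\bm{\alpha}}=0$ for all $\bm{\alpha}\in\Sigma^{\partial}_{F,n}:=\Sigma^{\bm r^\partial}_{F,n}(\mathbb I_d,k)$. Assume the coefficients of all earlier blocks vanish and fix $(F,n)$ with $t_F=t$.

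The first key step is to check which remaining monomials can contribute to $\varphi_{\bm{\alpha}'}$ for $\bm{\alpha}'\in\Sigma^{\partial}_{F,n}$. Take a surviving $\bm{\alpha}\in\Sigma^{\partial}_{G,m}$, later in the order. If $|\bm{\alpha}|_{\setminus F}>n$, then by \Cref{lem:continuity} $\nabla^{n}\llbracket\bm\lambda\rrbracket^{\bm\alpha}|_F=0$, so it does not contribute. If $|\bm\alpha|_{\setminus F}<n$, we need to show that $\bm\alpha$ belongs to an earlier block, namely $(F,n')$ with $n'<n$ or a lower-dimensional subsimplex. This follows from the disjointness and exhaustiveness of \eqref{eq:decompose-full}, together with the defining inequalities $|\bm\alpha|_{\setminus E}\ge r^\partial_{d-t'}+1$ of the later blocks. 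The same membership argument, using the conditions on faces $E$ of $F$, shows that a surviving $\bm\alpha$ with $|\bm\alpha|_{\setminus F}=n$ must lie in $\Sigma^{\partial}_{F,n}$ itself. Note that $\Sigma^{\bm r^\partial}_0$ is excluded from $\Sigma^\partial$ from the start. Consequently, only the block $\Sigma^{\partial}_{F,n}$ contributes, and the equations $\varphi_{\bm\alpha'}(u)=0$ reduce to a square system on that block.

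The second key step is to show this square system is nonsingular. For $|\bm\alpha|_{\setminus F}=n$ we have
\[
\bm D_F^{\bm\theta'}\llbracket\bm\lambda\rrbracket^{\bm\alpha}\big|_F=\Big(\prod_{i\in\mathbb I_{\setminus F}}\partial_{\bm n_{F,i}}^{\theta'_i}\text{ applied to }\textstyle\prod_{i\in\mathbb I_{\setminus F}}\lambda_i^{\alpha_i}/\alpha_i!\Big)\llbracket\bm\lambda\rrbracket_F^{\bm\sigma}.
\]
The normal-derivative factor is a constant matrix $M_{\bm\theta',\bm\theta}$ indexed by $\Sigma(\mathbb I_{d,\setminus t},n)$, and it is invertible because the $\bm n_{F,i}$ span a complement of the tangent space of $F$. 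The integral pairing then gives the Gram matrix $\frac1{|F|}\int_F\llbracket\bm\lambda\rrbracket_F^{\bm\sigma}\llbracket\bm\lambda\rrbracket_F^{\bm\sigma'}$ on $\bm\sigma,\bm\sigma'\in\Sigma_0^{\bm q_{t,n}}(\mathbb I_F,k-n)$. This matrix is symmetric positive definite, since the Bernstein monomials are linearly independent on $F$. For vertices, point evaluation of derivatives replaces the Gram matrix with the scalar $1$. Via the bijection $\mathcal R^\partial$, the block system is a Kronecker product of two invertible matrices, so $c_{\bm\alpha}=0$ on $\Sigma^\partial_{F,n}$.

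The main obstacle is the first step, specifically the combinatorial claim that no later block leaks into $(F,n)$. The delicate case is $t_F=d-1$ with $n\le b-1$, where $r^\partial_1=b-1$ differs from $r_1$. I would first try to handle it exactly as in \cite{hu2024construction}, since only the inequalities of \Cref{asm:Cr-element} for $\bm r^\partial$ (in particular $2(b-1)\le r_2$) are used there.
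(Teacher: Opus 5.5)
Your proposal is correct and is essentially the paper's argument: the paper inducts over pairs $(F,n)$, discards all blocks not below $(F,n)$ via \cite[Lemma 5.2]{hu2024construction}, and inverts the diagonal block via \cite[Lemma 5.3]{hu2024construction}, which are exactly your two key steps. Your dimension-first order is just a linear extension of the paper's partial order, and your Kronecker-product argument spells out Lemma 5.3. For the step you flag, the only case needing care is $t_G=t_F=t$ with $G\neq F$: apply the defining inequality of $\Sigma^{\bm{r}^{\partial}}_{G,m}(\mathbb{I}_d,k)$ to $E=F\cap G$ together with $2r^{\partial}_{d-t}\le r^{\partial}_{d-t+1}$ (or use $2r_d<k$ if $F\cap G=\emptyset$) to get $|\bm{\alpha}|_{\setminus F}\ge r^{\partial}_{d-t}+1>n$. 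Also, the vertex scalar is $1/(k-n)!$ rather than $1$, which is harmless.
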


\begin{proof}
    Consider $u \in \mathcal{P}^{\partial}$ such that $\varphi_{\bm{\alpha}}(u) = 0$ for all $\bm{\alpha} \in \Sigma^{\partial}$. For each $t$-dimensional ($0 \le t \le d - 1$) subsimplex $F$ of $K$ and $0 \le n \le r^{\partial}_{d - t}$, define the polynomial space
    \begin{equation*}
        \mathcal{P}_{F, n} := \big\{\llbracket\bm{\lambda}\rrbracket^{\bm{\alpha}}: \bm{\alpha} \in \Sigma_{F, n}^{\bm{r}^{\partial}}(\mathbb{I}_{d}, k)\big\}.
    \end{equation*}
    There exist $u_{F, n} \in \mathcal{P}_{F, n}$ for each $t$-dimensional ($0 \le t \le d - 1$) subsimplex $F$ of $K$ and $0 \le n \le r^{\partial}_{d - t}$, such that
    \begin{equation*}
        u = \sum_{F \in \mathcal{T}^{\partial}(K)} \sum_{n = 0}^{r^{\partial}_{d - t_{F}}} u_{F, n},
    \end{equation*}
    where $F$ is a $t_{F}$-dimensional subsimplex.
    
    We introduce a partial order of all pairs $(F, n)$, for each $t$-dimensional ($0 \le t \le d - 1$) subsimplex $F$ of $K$ and $0 \le n \le r^{\partial}_{d - t}$. Say $(E, n') \preceq (F', n')$, if $E$ is a $t'$-dimensional ($0 \le t' \le t - 1$) subsimplex of $F$, or $E = F$ and $n' \le n$. 
    
    Suppose that $u_{E, n'} = 0$ for all $(E, n') \preceq (F, n)$ with $(E, n') \ne (F, n)$. By \cite[Lemma 5.2]{hu2024construction}, for each $\bm{\alpha} \in \Sigma_{F, n}^{\bm{r}^{\partial}}(\mathbb{I}_{d}, k)$, it holds that
    \begin{equation*}
        \varphi_{\bm{\alpha}}(u_{E, n'}) = 0, \quad \forall (E, n') \not \preceq (F, n).
    \end{equation*}
    Therefore, $\varphi_{\bm{\alpha}}(u_{F, n}) = 0$ for all $\bm{\alpha} \in \Sigma_{F, n}^{\bm{r}^{\partial}}(\mathbb{I}_{d}, k)$, which implies $u_{F, n} = 0$ from \cite[Lemma 5.3]{hu2024construction}. By mathematical induction, it holds that $u_{F, n} = 0$ for each $t$-dimensional ($0 \le t \le d - 1$) subsimplex $F$ of $K$ and $0 \le n \le r^{\partial}_{d - t}$, and it follows that $u = 0$.
\end{proof}

\begin{proposition}
    \label{prop:sub-unsolvence-beta}
    Suppose that $\bm{r}$, $\rho$, $k$, and $b$ jointly satisfy \Cref{asm:Cr-macro-element}. 
    For $u \in \mathcal{Q}^{\circ}$, if $\psi_{\bm{\beta}}(u) = 0$ for all $\bm{\beta} \in \Sigma^{\circ}$, then $u = 0$.
\end{proposition}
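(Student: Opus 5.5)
We write $u = Q_{k}^{\rho}(p)$ with
\[
p = \sum_{\bm{\beta} \in \Sigma^{\circ}} c_{\bm{\beta}} \llbracket\bm{\lambda}\rrbracket^{\bm{\beta}},
\]
and aim to show that all $c_{\bm{\beta}}$ vanish. The argument imitates the partial-order induction of \Cref{prop:sub-unsolvence-alpha}, now applied to the pairs $(F_{\setminus j}, n)$ with $b \le n \le r_{1}$, followed by the bubble part. Accordingly, split
\[
p = \sum_{j \in \mathbb{I}_{d}} \sum_{n = b}^{r_{1}} p_{j, n} + p_{0},
\]
where $p_{j,n}$ collects the terms with $\bm{\beta} \in \Sigma_{F_{\setminus j}, n - b}^{\bm{r}^{\circ}}(\mathbb{I}_{d}, \rho)$ and $p_{0}$ collects those with $\bm{\beta} \in \Sigma_{0}^{\bm{r}^{\circ}}(\mathbb{I}_{d}, \rho)$.

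\textbf{Step 1: induction on $n$ for the facet components.} Suppose that $p_{j,n'} = 0$ for all $j$ and all $b \le n' < n$. Fix a facet $F_{\setminus j}$ and test with $\psi_{\bm{\beta}}$ for $\bm{\beta} \in \Sigma_{F_{\setminus j}, n-b}^{\bm{r}^{\circ}}$. Each such $\psi_{\bm{\beta}}$ involves only the normal derivative of order $n \le r_{1}$ on $F_{\setminus j}$, and the remaining components drop out as follows.
\begin{itemize}
\item By \Cref{coro:Q-beta-beta}(2), $Q_{k}^{\rho}(p_{j',n'})$ with $j' \ne j$ has vanishing derivatives up to order $r_{1}$ on $F_{\setminus j}$.
\item By \Cref{coro:Q-beta-beta-bubble}, the same holds for $Q_{k}^{\rho}(p_{0})$.
\item By \Cref{coro:Q-beta-beta}(1), $Q_{k}^{\rho}(p_{j,n'})$ with $n' > n$ vanishes to order $n' - 1 \ge n$ on $F_{\setminus j}$.
\end{itemize}
Hence only $Q_{k}^{\rho}(p_{j,n})$ contributes.

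Restricted to $K_{j}$ near $F_{\setminus j}$, \eqref{eq:Q-k-b-operator} gives $Q_{k}^{\rho}(\llbracket\bm{\lambda}\rrbracket_j^{\bm{\beta}}) = \llbracket\bm{\lambda}\rrbracket_j^{\bm{\beta}+b\bm e_j}$. Since $p_{j,n}$ vanishes to order $n - b - 1$ on $F_{\setminus j}$, \Cref{lem:continuity} shows that its $K_j$-coefficients with $\beta_j = n - b$ agree with its $K$-coefficients, because only the terms with $\lambda_j$-power exactly $n-b$ survive. It follows that the $n$-th normal derivative of $Q_{k}^{\rho}(p_{j,n})$ on $F_{\setminus j}$ equals, up to a nonzero constant factor $\bigl(\partial_{\bm n_F}\lambda_{j,j}\bigr)^{n}$, the polynomial
\[
\sum_{\bm{\beta}} c_{\bm{\beta}} \llbracket\bm{\lambda}\rrbracket_{F}^{\bm{\sigma}}, \qquad \bm{\sigma} = \bm{\beta}|_{\mathbb{I}_{F}} \in \Sigma_{0}^{\bm{q}_{d-1,n}}(\mathbb{I}_{F}, k-n).
\]
So $\psi_{\bm{\beta}}(u) = 0$ reduces to vanishing moments of this facet polynomial against $\llbracket\bm{\lambda}\rrbracket_F^{\bm{\sigma}'}$ for all $\bm{\sigma}'$ in the same index set $\Sigma_{0}^{\bm{q}_{d-1,n}}$, via the bijection $\mathcal{R}^{\circ}_b$. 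The span of $\{\llbracket\bm{\lambda}\rrbracket_F^{\bm\sigma}\}$ over this index set is the space of facet bubbles with the prescribed vanishing orders, and the moment matrix against the same basis is a Gram matrix. It is therefore positive definite, and we conclude $c_{\bm{\beta}} = 0$. This is the same mechanism as \cite[Lemma 5.3]{hu2024construction}, applicable because $\bm{r}^{\circ}$, $\rho$ satisfy \Cref{asm:Cr-element} by \Cref{prop:asm}. This completes the induction on $n$.

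\textbf{Step 2: the bubble part.} Once all $p_{j,n}$ vanish, $u = Q_{k}^{\rho}(p_{0})$ and the remaining functionals give
\[
\frac{1}{|K|}\int_{K} Q_{k}^{\rho}(p_0)\, Q_{k}^{\rho}(\llbracket\bm{\lambda}\rrbracket^{\bm{\beta}}) = 0 \qquad \text{for all } \bm{\beta} \in \Sigma_{0}^{\bm{r}^{\circ}}.
\]
Taking the linear combination with coefficients $c_{\bm{\beta}}$ yields $\int_{K} |Q_{k}^{\rho}(p_{0})|^{2} = 0$. Thus $Q_{k}^{\rho}(p_0) = 0$, and injectivity from \Cref{prop:Q-k-b-bijection} gives $p_{0} = 0$.

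\textbf{Main obstacle.} The delicate point is the precise identification in Step 1 of the $n$-th normal derivative trace of $Q_{k}^{\rho}(p_{j,n})$ on $F_{\setminus j}$, computed in $K_j$'s coordinates, with a nonzero multiple of the facet polynomial whose coefficients are exactly the $c_{\bm\beta}$. This requires checking that the change from $K$- to $K_j$-barycentric coordinates via \eqref{eq:transformation} does not mix in lower-order terms at the relevant layer; it does not, since $\lambda_{j,j} = \lambda_j/\mu_j$ and $\lambda_{j,i} \equiv \lambda_i$ modulo $\lambda_j$. If positivity of the Gram matrix were unavailable because the test index set differed from the trial set, I would instead fall back on the triangular structure argument of \cite[Lemma 5.3]{hu2024construction}.
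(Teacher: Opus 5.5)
Your proposal is correct and follows the paper's proof. It uses the same splitting of $\mathcal{Q}^{\circ}$ into the layers $\Sigma_{F_{\setminus j}, n-b}^{\bm{r}^{\circ}}$ plus the bubble part, and the same use of \Cref{coro:Q-beta-beta,coro:Q-beta-beta-bubble} to isolate $Q_{k}^{\rho}(p_{j,n})$. It then identifies the $n$-th normal trace on $F_{\setminus j}$ with $\sum c_{\bm\beta}\llbracket\bm\lambda\rrbracket_F^{\bm\sigma}$, concludes with a Gram-matrix argument, and handles the bubble part via $\int_K |u|^2 = 0$, all as in the paper.

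One small correction: the $K_j$-coefficients in the layer $\beta_j = n-b$ are $\mu_j^{n-b}$ times the $K$-coefficients (since $\lambda_j = \mu_j \lambda_{j,j}$), not equal to them. This uniform nonzero factor is harmless and matches the paper's constant $\mu_j^{n-b}\bar{c}_j^{\,n}$.
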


\begin{proof}
    First, we define the interior bubble function space as 
    \begin{equation*}
        \widehat{\mathcal{B}} := \Span \big\{Q_{k}^{\rho}(\llbracket\bm{\lambda}\rrbracket^{\bm{\beta}}): \bm{\beta} \in \Sigma^{\bm{r}^{\circ}}_{0}(\mathbb{I}_{d}, \rho)\big\},
    \end{equation*}
    which is a subspace of $\mathcal{Q}^{\circ}$, and we prove the partial unisolvence of $\{\psi_{\bm{\beta}}: \bm{\beta} \in \Sigma_{0}^{\bm{r}^{\circ}}(\mathbb{I}_{d}, \rho)\}$ for the interior bubble function space $\widehat{\mathcal{B}}$. For $u \in \widehat{\mathcal{B}}$, there exist $c_{\bm{\beta}}$ for $\bm{\beta} \in \Sigma_{0}^{\bm{r}^{\circ}}(\mathbb{I}_{d}, \rho)$, such that
    \begin{equation*}
        u = \sum_{\bm{\beta} \in \Sigma_{0}^{\bm{r}^{\circ}}(\mathbb{I}_{d}, \rho)} c_{\bm{\beta}} Q_{k}^{\rho}(\llbracket\bm{\lambda}\rrbracket^{\bm{\beta}}).
    \end{equation*}
    Then, it follows that
    \begin{equation*}
        \begin{aligned}
            0 & = \sum_{\bm{\beta} \in \Sigma_{0}^{\bm{r}^{\circ}}(\mathbb{I}_{d}, \rho)} c_{\bm{\beta}} \psi_{\bm{\beta}}(u) \\
            & = \sum_{\bm{\beta} \in \Sigma_{0}^{\bm{r}^{\circ}}(\mathbb{I}_{d}, \rho)} c_{\bm{\beta}} \cdot \frac{1}{|K|} \int_{K} u \cdot Q_{k}^{\rho}(\llbracket\bm{\lambda}\rrbracket^{\bm{\beta}}) \\
            & = \frac{1}{|K|} \int_{K} u \cdot \Bigg(\sum_{\bm{\beta} \in \Sigma_{0}^{\bm{r}^{\circ}}(\mathbb{I}_{d}, \rho)} c_{\bm{\beta}} Q_{k}^{\rho}(\llbracket\bm{\lambda}\rrbracket^{\bm{\beta}})\Bigg) \\
            & = \frac{1}{|K|} \int_{K} u \cdot u,
        \end{aligned}
    \end{equation*}
    which implies that $u = 0$.

    Next, for each $j \in \mathbb{I}_{d}$ and each $b \le n \le r_{1}$, we prove the partial unisolvence of $\{\psi_{\bm{\beta}}: \bm{\beta} \in \Sigma_{F_{\setminus j}, n - b}^{\bm{r}^{\circ}}(\mathbb{I}_{d}, \rho)\}$ for the piecewise function space
    \begin{equation*}
        \mathcal{Q}_{j, n} := \Span \big\{Q_{k}^{\rho}(\llbracket\bm{\lambda}\rrbracket^{\bm{\beta}}): \bm{\beta} \in \Sigma_{F_{\setminus j}, n - b}^{\bm{r}^{\circ}}(\mathbb{I}_{d}, \rho)\big\}.
    \end{equation*}
    For each $\bm{\beta} := (\beta_{0}, \beta_{1}, \ldots, \beta_{d}) \in \Sigma_{F_{\setminus j}, n - b}^{\bm{r}^{\circ}}(\mathbb{I}_{d}, \rho)$, it holds that $\beta_{j} = n - b$. Furthermore, by \eqref{eq:transformation}, it follows that
    \begin{equation*}
        \begin{aligned}
            \llbracket\bm{\lambda}\rrbracket^{\bm{\beta}} & = \frac{\lambda_{j}^{n - b}}{(n - b)!} \prod_{i \in \mathbb{I}_{\setminus j}} \frac{\lambda_{i}^{\beta_{i}}}{\beta_{i}!} \\
            & = \frac{(\mu_{j} \lambda_{j, j})^{n - b}}{(n - b)!} \prod_{i \in \mathbb{I}_{\setminus j}} \frac{(\lambda_{j, i} + \mu_{i} \lambda_{j, j})^{\beta_{i}}}{\beta_{i}!} \\
            & = \mu_{j}^{n - b} \frac{\lambda_{j, j}^{n - b}}{(n - b)!} \prod_{i \in \mathbb{I}_{\setminus j}} \frac{\lambda_{j, i}^{\beta_{i}}}{\beta_{i}!} + \lambda_{j, j}^{n - b + 1} \bar{p}_{\bm{\beta}},
        \end{aligned}
    \end{equation*}
    where $\bar{p}_{\bm{\beta}}$ is a polynomial such that $\bar{p}_{\bm{\beta}} \in \mathcal{P}_{k - n - 1}(K)$. Hence, it holds that
    \begin{equation*}
        Q_{k}^{\rho}(\llbracket\bm{\lambda}\rrbracket^{\bm{\beta}})|_{K_{j}} = \mu_{j}^{n - b} \frac{\lambda_{j, j}^{n}}{n!} \prod_{i \in \mathbb{I}_{\setminus j}} \frac{\lambda_{j, i}^{\beta_{i}}}{\beta_{i}!} + \lambda_{j, j}^{n + 1} \bar{p}_{\bm{\beta}, b},
    \end{equation*}
    where $\bar{p}_{\bm{\beta}, b}$ is a polynomial such that $\bar{p}_{\bm{\beta}, b} \in \mathcal{P}_{k - n - 1}(K)$. Let $(\bm{\theta}, \bm{\sigma}) = \mathcal{R}^{\circ}_{b}(\bm{\beta})$, and it follows that
    \begin{equation*}
        \frac{\partial^{n}}{\partial \bm{n}_{F}^{n}} Q_{k}^{\rho}(\llbracket\bm{\lambda}\rrbracket^{\bm{\beta}})|_{K_{j}}\bigg|_{F_{\setminus j}} = \frac{\partial^{n}}{\partial \bm{n}_{F}^{n}} \bigg(\mu_{j}^{n - b} \frac{\lambda_{j, j}^{n}}{n!} \prod_{i \in \mathbb{I}_{\setminus j}} \frac{\lambda_{j, i}^{\beta_{i}}}{\beta_{i}!} + \lambda_{j, j}^{n + 1} \bar{p}_{\bm{\beta}, b} \bigg) \bigg|_{F_{\setminus j}} = \mu_{j}^{n - b} \bar{c}_{j}^{n} \llbracket\bm{\lambda}\rrbracket_{F_{\setminus j}}^{\bm{\sigma}},
    \end{equation*}
    where $0 < \mu_{j} < 1$ and $\bar{c}_{j} = \frac{\partial}{\partial \bm{n}_{F}} \lambda_{j, j} \ne 0$ are constants corresponding to $\mathcal{T}_{A}(K)$. For $u \in \mathcal{Q}_{j, n}$, there exists $c_{\bm{\beta}}$ for each $\bm{\beta} \in \Sigma_{F_{\setminus j}, n - b}^{\bm{r}^{\circ}}(\mathbb{I}_{d}, \rho)$, such that
    \begin{equation*}
        u = \sum_{\bm{\beta} \in \Sigma_{F_{\setminus j}, n - b}^{\bm{r}^{\circ}}(\mathbb{I}_{d}, \rho)} c_{\bm{\beta}} Q_{k}^{\rho}(\llbracket\bm{\lambda}\rrbracket^{\bm{\beta}}),
    \end{equation*}
    and
    \begin{equation*}
        \frac{\partial^{n}}{\partial \bm{n}_{F}^{n}} u\bigg|_{F_{\setminus j}} = \sum_{\bm{\beta} \in \Sigma_{F_{\setminus j}, n - b}^{\bm{r}^{\circ}}(\mathbb{I}_{d}, \rho)} c_{\bm{\beta}} \bigg(\frac{\partial^{n}}{\partial \bm{n}_{F}^{n}} Q_{k}^{\rho}(\llbracket\bm{\lambda}\rrbracket^{\bm{\beta}})|_{K_{j}}\bigg)\bigg|_{F_{\setminus j}} = \mu_{j}^{n - b} \bar{c}_{j}^{n} \sum_{\bm{\beta} \in \Sigma_{F_{\setminus j}, n - b}^{\bm{r}^{\circ}}(\mathbb{I}_{d}, \rho)} c_{\bm{\beta}} \llbracket\bm{\lambda}\rrbracket_{F_{\setminus j}}^{\bm{\sigma}}.
    \end{equation*}
    Here each $\bm{\sigma}$ corresponds to a multi-index $\bm{\beta}$ by $\mathcal{R}^{\circ}_{b}(\bm{\beta}) = (\bm{\theta}, \bm{\sigma})$. Note that
    \begin{equation*}
        \begin{aligned}
            0 & = \sum_{\bm{\beta} \in \Sigma_{F_{\setminus j}, n - b}^{\bm{r}^{\circ}}(\mathbb{I}_{d}, \rho)} c_{\bm{\beta}} \psi_{\bm{\beta}}(u) \\
            & = \sum_{\bm{\beta} \in \Sigma_{F_{\setminus j}, n - b}^{\bm{r}^{\circ}}(\mathbb{I}_{d}, \rho)} c_{\bm{\beta}} \cdot \frac{1}{|F_{\setminus j}|} \int_{F_{\setminus j}} \bigg(\frac{\partial^{n}}{\partial \bm{n}_{F}^{n}} u\bigg) \cdot \llbracket\bm{\lambda}\rrbracket_{F_{\setminus j}}^{\bm{\sigma}} \\
            & = \frac{1}{|F_{\setminus j}|} \int_{F_{\setminus j}} \bigg(\frac{\partial^{n}}{\partial \bm{n}_{F}^{n}} u\bigg) \cdot \Bigg(\sum_{\bm{\beta} \in \Sigma_{F_{\setminus j}, n - b}^{\bm{r}^{\circ}}(\mathbb{I}_{d}, \rho)} c_{\bm{\beta}} \llbracket\bm{\lambda}\rrbracket_{F_{\setminus j}}^{\bm{\sigma}}\Bigg) \\
            & = \mu_{j}^{n - b} \bar{c}_{j}^{n} \frac{1}{|F_{\setminus j}|} \int_{F_{\setminus j}} \Bigg(\sum_{\bm{\beta} \in \Sigma_{F_{\setminus j}, n - b}^{\bm{r}^{\circ}}(\mathbb{I}_{d}, \rho)} c_{\bm{\beta}} \llbracket\bm{\lambda}\rrbracket_{F_{\setminus j}}^{\bm{\sigma}}\Bigg) \cdot \Bigg(\sum_{\bm{\beta} \in \Sigma_{F_{\setminus j}, n - b}^{\bm{r}^{\circ}}(\mathbb{I}_{d}, \rho)} c_{\bm{\beta}} \llbracket\bm{\lambda}\rrbracket_{F_{\setminus j}}^{\bm{\sigma}}\Bigg),
        \end{aligned}
    \end{equation*}
    which implies that $c_{\bm{\beta}} = 0$ for each $\bm{\beta} \in \Sigma_{F_{\setminus j}, n - b}^{\bm{r}^{\circ}}(\mathbb{I}_{d}, \rho)$, and it follows that $u = 0$.

    Finally, we prove the unisolvence of $\{\psi_{\bm{\beta}}: \bm{\beta} \in \Sigma^{\circ}\}$ for the piecewise polynomial space $\mathcal{Q}^{\circ}$. Consider $u \in \mathcal{Q}^{\circ}$ such that $\psi_{\bm{\beta}}(u) = 0$ for all $\bm{\beta} \in \Sigma^{\circ}$. There exist $u^{\circ} \in \widehat{\mathcal{B}}$ and $u_{j, n} \in \mathcal{Q}_{j, n}$ for $j \in \mathbb{I}_{d}$ and $b \le n \le r_{1}$, such that
    \begin{equation*}
        u = u^{\circ} + \sum_{j \in \mathbb{I}_{d}} \sum_{n = b}^{r_{1}} u_{j, n}.
    \end{equation*}
    
    Fix $j \in \mathbb{I}_{d}$ and $n$ such that $b \le n \le r_{1}$. Moreover, suppose that $u_{j, n'} = 0$ for all $b \le n' \le n - 1$, so $u = u^{\circ} + \sum_{j' \in \mathbb{I}_{\setminus j}} \sum_{n' = b}^{r_{1}} u_{j', n'} + \sum_{n' = n}^{r_{1}} u_{j, n'}$. For each $\bm{\beta} \in \Sigma_{F_{\setminus j}, n - b}^{\bm{r}^{\circ}}(\mathbb{I}_{d}, \rho)$, by \Cref{coro:Q-beta-beta,coro:Q-beta-beta-bubble}, it holds that
    \begin{equation*}
        \psi_{\bm{\beta}}(u_{j, n'}) = 0, \quad \forall n + 1 \le n' \le r_{1}, \qquad \psi_{\bm{\beta}}(u_{j', n'}) = 0, \quad \forall j' \in \mathbb{I}_{\setminus j}, \ b \le n' \le r_{1}, \qquad \psi_{\bm{\beta}}(u^{\circ}) = 0.
    \end{equation*}
    Therefore, we have $\psi_{\bm{\beta}}(u_{j, n}) = 0$ for all $\bm{\beta} \in \Sigma_{F_{\setminus j}, n - b}^{\bm{r}^{\circ}}(\mathbb{I}_{d}, \rho)$, which implies $u_{j, n} = 0$. By mathematical induction, it holds that $u_{j, n} = 0$ for all $j \in \mathbb{I}_{d}$ and all $b \le n \le r_{1}$.
    
    Then, the piecewise polynomial $u$ is equal to an interior bubble function $u^{\circ} \in \widehat{\mathcal{B}}$. Since $\psi_{\bm{\beta}}(u^{\circ}) = 0$ for all $\bm{\beta} \in \Sigma_{0}^{\bm{r}^{\circ}}(\mathbb{I}_{d}, \rho)$, it follows that $u = u^{\circ} = 0$.
\end{proof}

Now, we are ready to show that the set of degrees of freedom is unisolvent for the shape function space $\mathcal{S}_{k}^{\bm{r}, \rho}(\mathcal{T}_{A}(K))$.

\begin{proof}[Proof of \Cref{thm:unsolvence}]
    First, we prove that $\llbracket\bm{\lambda}\rrbracket^{\bm{\alpha}}$ for $\bm{\alpha} \in \Sigma^{\partial}$ and $Q_{k}^{\rho}(\llbracket\bm{\lambda}\rrbracket^{\bm{\beta}})$ for $\bm{\beta} \in \Sigma^{\circ}$ are linearly independent. Suppose that there exist $u^{\partial} \in \mathcal{P}^{\partial}$ and $u^{\circ} \in \mathcal{Q}^{\circ}$, such that $u^{\partial} + u^{\circ} = 0$. Then, for each $\bm{\alpha} \in \Sigma^{\partial}$, it holds that $\varphi_{\bm{\alpha}}(u^{\partial}) + \varphi_{\bm{\alpha}}(u^{\circ}) = \varphi_{\bm{\alpha}}(u^{\partial} + u^{\circ}) = 0$. 
    
    From \eqref{eq:alpha-dof-vertex}, \eqref{eq:alpha-dof}, \eqref{eq:alpha-dof-d-1} and \Cref{coro:Q-alpha-beta}, it follows that $\varphi_{\bm{\alpha}}(u^{\circ}) = 0$. Therefore, for each $\bm{\alpha} \in \Sigma^{\partial}$, it holds that $\varphi_{\bm{\alpha}}(u^{\partial}) = 0$, which implies that $u^{\partial} = 0$ by \Cref{prop:sub-unsolvence-alpha} and $u^{\circ} = 0$. Since both $\{\llbracket\bm{\lambda}\rrbracket^{\bm{\alpha}}: \bm{\alpha} \in \Sigma^{\partial}\}$ and $\{Q_{k}^{\rho}(\llbracket\bm{\lambda}\rrbracket^{\bm{\beta}}): \bm{\beta} \in \Sigma^{\circ}\}$ are bases of $\mathcal{P}^{\partial}$ and $\mathcal{Q}^{\circ}$, respectively, $\llbracket\bm{\lambda}\rrbracket^{\bm{\alpha}}$ for $\bm{\alpha} \in \Sigma^{\partial}$ and $Q_{k}^{\rho}(\llbracket\bm{\lambda}\rrbracket^{\bm{\beta}})$ for $\bm{\beta} \in \Sigma^{\circ}$ are linearly independent.

    Next, we prove that $\mathcal{S}_{k}^{\bm{r}, \rho}(\mathcal{T}_{A}(K)) = \mathcal{P}^{\partial} + \mathcal{Q}^{\circ}$. Obviously, $\mathcal{P}^{\partial} \subseteq \mathcal{S}_{k}^{\bm{r}, \rho}(\mathcal{T}_{A}(K))$, and $\mathcal{Q}^{\circ} \subseteq \mathcal{S}_{k}^{\bm{r}, \rho}(\mathcal{T}_{A}(K))$ is proved by \Cref{prop:subspace-Q-S}. It follows that $\mathcal{P}^{\partial} + \mathcal{Q}^{\circ} \subseteq \mathcal{S}_{k}^{\bm{r}, \rho}(\mathcal{T}_{A}(K))$ and
    \begin{equation}
        \label{eq:dim-S-P-Q}
        \dim \mathcal{S}_{k}^{\bm{r}, \rho}(\mathcal{T}_{A}(K)) \ge \card(\Sigma^{\partial}) + \card(\Sigma^{\circ}).
    \end{equation}
    Consider a subspace $\mathcal{S}^{\circ}$ of $\mathcal{S}_{k}^{\bm{r}, \rho}(\mathcal{T}_{A}(K))$, such that
    \begin{equation*}
        \mathcal{S}^{\circ} := \big\{u \in \mathcal{S}_{k}^{\bm{r}, \rho}(\mathcal{T}_{A}(K)): \varphi_{\bm{\alpha}}(u) = 0 \text{ for each } \bm{\alpha} \in \Sigma^{\partial}\big\}.
    \end{equation*}
    Then, it follows that
    \begin{equation}
        \label{eq:dim-S-S-P}
        \dim \mathcal{S}_{k}^{\bm{r}, \rho}(\mathcal{T}_{A}(K)) \le \card(\Sigma^{\partial}) + \dim \mathcal{S}^{\circ}.
    \end{equation}
    From \cite[Proposition A.2]{hu2025sharpness}, for each $u \in \mathcal{S}^{\circ}$ and each $t$-dimensional ($0 \le t \le d - 1$) subsimplex $F$ of $K$, it holds that $\nabla^{n} u|_{F} = 0$ for $0 \le n \le r^{\partial}_{d - t}$. 
    
    Since for each $u \in \mathcal{S}^{\circ} \subseteq \mathcal{S}_{k}^{\bm{r}, \rho}(\mathcal{T}_{A}(K))$, from \eqref{eq:shape-function}, it holds that for $j \in \mathbb{I}_{d}$, $u|_{K_{j}} \in \mathcal{P}_{k}(K_{j})$ and $\nabla^{n} u|_{K_{j}}$ for $0 \le n \le \rho$ is single-valued at $V_{A}$. Moreover, for each $(d - 1)$-dimensional subsimplex $F$ of $K$, it holds that $\nabla^{n} u|_{F} = 0$ for $0 \le n \le r^{\partial}_{1} = b - 1$. Therefore, from \eqref{eq:Q-k-b-space}, it follows that $\mathcal{S}^{\circ} \subseteq \mathcal{Q}_{k}^{\rho}(\mathcal{T}_{A}(K))$.
    
    Note that for each $u \in \mathcal{S}^{\circ} \subseteq \mathcal{Q}_{k}^{\rho}(\mathcal{T}_{A}(K))$ and each $t$-dimensional ($0 \le t \le d - 2$) subsimplex $F$ of $K$, it holds that $\nabla^{n} u|_{F} = 0$ for $0 \le n \le r^{\partial}_{d - t} = r_{d - t}$, which leads to $\mathcal{S}^{\circ} \subseteq \mathcal{Q}^{\circ}$ by \Cref{prop:boundary-condition}. Hence, it holds that
    \begin{equation*}
        \dim \mathcal{S}^{\circ} \le \dim \mathcal{Q}^{\circ} = \card(\Sigma^{\circ}).
    \end{equation*}
    Together with \eqref{eq:dim-S-P-Q} and \eqref{eq:dim-S-S-P}, it follows that
    \begin{equation*}
        \dim \mathcal{S}_{k}^{\bm{r}, \rho}(\mathcal{T}_{A}(K)) = \card(\Sigma^{\partial}) + \card(\Sigma^{\circ}), \quad \mathcal{S}_{k}^{\bm{r}, \rho}(\mathcal{T}_{A}(K)) = \mathcal{P}^{\partial} + \mathcal{Q}^{\circ},
    \end{equation*}
    and $\{\llbracket\bm{\lambda}\rrbracket^{\bm{\alpha}}: \bm{\alpha} \in \Sigma^{\partial}\} \cup \{Q_{k}^{\rho}(\llbracket\bm{\lambda}\rrbracket^{\bm{\beta}}): \bm{\beta} \in \Sigma^{\circ}\}$ is a basis of $\mathcal{S}_{k}^{\bm{r}, \rho}(\mathcal{T}_{A}(K))$.

    Suppose that $u \in \mathcal{S}_{k}^{\bm{r}, \rho}(\mathcal{T}_{A}(K))$ such that $\varphi_{\bm{\alpha}}(u) = 0$ for each $\bm{\alpha} \in \Sigma^{\partial}$ and $\psi_{\bm{\beta}}(u) = 0$ for each $\bm{\beta} \in \Sigma^{\circ}$. There exist $u^{\partial} \in \mathcal{P}^{\partial}$ and $u^{\circ} \in \mathcal{Q}^{\circ}$ such that $u = u^{\partial} + u^{\circ}$. Then, for each $\bm{\alpha} \in \Sigma^{\partial}$, it holds that $\varphi_{\bm{\alpha}}(u^{\partial}) = \varphi_{\bm{\alpha}}(u^{\partial}) + \varphi_{\bm{\alpha}}(u^{\circ}) = \varphi_{\bm{\alpha}}(u) = 0$, which implies that $u^{\partial} = 0$ and $u = u^{\circ}$. Moreover, for each $\bm{\beta} \in \Sigma^{\circ}$, it holds that $\psi_{\bm{\beta}}(u^{\circ}) = \psi_{\bm{\beta}}(u) = 0$, which yields that $u = u^{\circ} = 0$. 
\end{proof}

\section{Proof of Continuity}
\label{sec:continuity}

In this section, we discuss the continuity relations between the local shape function spaces defined on two adjacent $d$-dimensional simplices, sharing the same degrees of freedom on the common parts. Afterwards, we show that the resulting finite element space over the triangulation $\mathcal{T}(\Omega)$ of the $d$-dimensional domain $\Omega$ is the superspline space $\mathcal{S}_{k}^{\bm{r}, \rho}(\mathcal{T}_{A}(\Omega))$ defined in \eqref{eq:superspline}. To this end, we need to update some notations and generalize the degrees of freedom of the local shape function space $\mathcal{S}_{k}^{\bm{r}, \rho}(\mathcal{T}_{A}(K))$ defined in \Cref{sec:construction} to a global finite element space. 

Given a triangulation $\mathcal{T}(\Omega)$ of a $d$-dimensional domain $\Omega$, let $V_{0}, V_{1}, \ldots, V_{N_{0} - 1}$ be the vertices of $\mathcal{T}(\Omega)$. For a $t$-dimensional ($0 \le t \le d$) subsimplex $F$ of $\mathcal{T}(\Omega)$, with vertices $V_{i_{0}}, V_{i_{1}}, \ldots, V_{i_{t}}$, let $\mathbb{I}_{F} := \{i_{0}, i_{1}, \ldots, i_{t}\}$ be the generalized vertex index set. In particular, when $F$ is a $d$-dimensional simplex $K$ or $F$ is a vertex $V$, the notations $I_{K}$ and $I_{V}$ are also used, respectively. Moreover, denote $\mathbb{I}_{K, \setminus F} := \mathbb{I}_{K} \setminus \mathbb{I}_{F}$ for each $K$ such that $F$ is a subsimplex of $K$, and $\mathbb{I}_{F, \setminus E} := \mathbb{I}_{F} \setminus \mathbb{I}_{E}$ for each $E$ such that $E$ is a subsimplex of $F$. 

The generalized barycentric coordinates corresponding to $K$ and $F$ are denoted as $\lambda_{K, i}$ for $i \in \mathbb{I}_{K}$ and $\lambda_{F, i}$ for $i \in \mathbb{I}_{F}$, respectively, and simplified as $\lambda_{i}$. Consequently, the generalized normalized monomials $\llbracket\bm{\lambda}\rrbracket_{K}^{\bm{\alpha}}$ and $\llbracket\bm{\lambda}\rrbracket_{F}^{\bm{\sigma}}$ can be defined similarly to their local versions in \Cref{sec:notation}. 

Moreover, the generalized multi-index sets $\Sigma(\mathbb{I}_{K}, l)$, $\Sigma_{0}^{\bm{q}}(\mathbb{I}_{K}, l)$, $\Sigma(\mathbb{I}_{F}, l)$, $\Sigma_{0}^{\bm{q}}(\mathbb{I}_{F}, l)$, and the generalized partial sums $|\bm{\alpha}|_{K}$, $|\bm{\alpha}|_{K, \setminus F}$, $|\bm{\sigma}|_{F}$, $|\bm{\sigma}|_{F, \setminus E}$ can be defined in the same way as their local versions in \Cref{sec:notation,sec:construction}. It is important to note that the indices involved in these global definitions should be interpreted locally with respect to the simplex of interest.

\begin{definition}[Global degrees of freedom]
\label{def:global-dofs}
Let $u \in \mathcal{S}_{k}^{\bm{r}, \rho}(\mathcal{T}_{A}(\Omega))$. The set of global degrees of freedom for $u$ is divided into three subsets.
\begin{enumerate}
    \item For each vertex $V$ of $\mathcal{T}(\Omega)$, the global degrees of freedom at $V$ for $u$ are defined as follows: for each $0 \le n \le r_{d}$, and each $\bm{\theta} \in \Sigma(\mathbb{I}_{d, \setminus 0}, n)$,
    \begin{equation}
        \label{eq:set-dof-1-global}
        \eta_{V, \bm{\theta}}: u \longmapsto \bm{D}_{V}^{\bm{\theta}} u \big|_{V}.
    \end{equation}
    \item For each $t$-dimensional $(1 \le t \le d - 1)$ subsimplex $F$ of $\mathcal{T}(\Omega)$, the global degrees of freedom on $F$ for $u$ are defined as follows: for each $0 \le n \le r_{d - t}$, each $\bm{\theta} \in \Sigma(\mathbb{I}_{d, \setminus t}, n)$, and each $\bm{\sigma} \in \Sigma_{0}^{\bm{q}_{t, n}}(\mathbb{I}_{F}, k - n)$, 
    \begin{equation}
        \label{eq:set-dof-2-global}
        \eta_{F, \bm{\theta}, \bm{\sigma}}: u \longmapsto \frac{1}{|F|} \int_{F} (\bm{D}_{F}^{\bm{\theta}} u) \cdot \llbracket\bm{\lambda}\rrbracket_{F}^{\bm{\sigma}},
    \end{equation}
    where the continuity vector $\bm{q}_{t, n} := (r_{d - t + 1} - n, \ldots, r_{d} - n)$.
    \item For each $d$-dimensional simplex $K$ of $\mathcal{T}(\Omega)$, the global degrees of freedom defined on $K$ are defined as follows: for each $\bm{\beta} \in \Sigma_{0}^{\bm{r}^{\circ}}(\mathbb{I}_{K}, \rho)$,
    \begin{equation}
        \label{eq:set-dof-3-global}
        \eta_{K, \bm{\beta}}: u \longmapsto \frac{1}{|K|} \int_{K} u \cdot Q_{k}^{\rho}(\llbracket\bm{\lambda}\rrbracket_{K}^{\bm{\beta}}),
    \end{equation}
    where the continuity vector $\bm{r}^{\circ} := (r_{1} - b, \ldots, r_{d} - b)$.
\end{enumerate}
\end{definition}

For simplicity of notation, a generic global degree of freedom is denoted by $\eta$.
Note that the global degrees of freedom defined on $V$ (or $F$, $K$) are exactly the local degrees of freedom defined on $V$ (or $F$, $K$) for the element $K$ and the local shape function space $\mathcal{S}_{k}^{\bm{r}, \rho}(\mathcal{T}_{A}(K))$, where $V$ (or $F$, $K$) is a subsimplex of $K$. 

The proof of the following proposition follows exactly the same argument as in \cite[Proposition~A.2]{hu2025sharpness}, except that the assumption on the relationship between $r_{1}$ and $r_{2}$ is different.
\begin{proposition}
    \label{prop:continuity}
    Suppose that $\bm{r} := (r_{1}, \ldots, r_{d})$, $\rho$, $k$, and $b$ jointly satisfy \Cref{asm:Cr-macro-element}. 
    Let $F$ be a $t$-dimensional ($0 \le t \le d - 1$) subsimplex, shared by two $d$-dimensional simplices $K_{+}$ and $K_{-}$, with their Alfeld splits $\mathcal{T}_{A}(K_{+})$ and $\mathcal{T}_{A}(K_{-})$, respectively. If the piecewise polynomials $u_{+} \in \mathcal{S}_{k}^{\bm{r}}(\mathcal{T}_{A}(K_{+}))$ and $u_{-} \in \mathcal{S}_{k}^{\bm{r}}(\mathcal{T}_{A}(K_{-}))$ satisfy that for each $t'$-dimensional ($0 \le t' \le t$) subsimplex $E$ of $F$, it holds that $\eta(u_{+}) = \eta(u_{-})$ for each global degree of freedom $\eta$ defined on $E$, then for each $n$ such that $0 \le n \le r_{d - t}$, it follows that
    \begin{equation*}
        \nabla^{n} u_{+}|_{F} = \nabla^{n} u_{-}|_{F}.
    \end{equation*}
\end{proposition}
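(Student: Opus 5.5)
We argue by induction on the dimension $t$ of $F$, and for fixed $F$ by induction on the order $n$ of the normal derivatives. Set $w := u_{+} - u_{-}$, understood as the pair of traces of $u_{\pm}$ and their derivatives on $F$. Since $F$ is a subsimplex of $K_{\pm}$ and does not contain the split points, each $u_{\pm}$ restricted to a neighbourhood of $F$ inside a single subsimplex $K_{\pm, j}$ (with $j \in \mathbb{I}_{\setminus F}$) is a polynomial of degree $k$. By the definition \eqref{eq:shape-function}, $\nabla^{n} u_{\pm}$ is single-valued on $F$ for $n \le r_{d-t}$ when $t \le d-2$, and for $t = d-1$ this follows from $u_{\pm} \in C^{r_{1}}(K_{\pm})$. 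Hence $g_{\bm{\theta}} := (\bm{D}_{F}^{\bm{\theta}} u_{+} - \bm{D}_{F}^{\bm{\theta}} u_{-})|_{F}$ is a well-defined polynomial in $\mathcal{P}_{k-n}(F)$ for every $\bm{\theta} \in \Sigma(\mathbb{I}_{d,\setminus t}, n)$. Here the normals $\bm{n}_{F,i}$ are fixed globally, so both sides use the same directions, and the tangential derivatives are handled by restriction. It therefore suffices to show $g_{\bm{\theta}} = 0$ for all $\bm\theta$ and all $n \le r_{d-t}$.

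\textbf{Base case $t = 0$.} This is immediate, since the vertex degrees of freedom \eqref{eq:set-dof-1-global} give all derivatives of order $\le r_{d}$ at $V$.

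\textbf{Inductive step.} Let $t \ge 1$, and fix a proper subsimplex $E$ of $F$ of dimension $t' < t$. By the induction hypothesis applied to $E$ (all degrees of freedom on subsimplices of $E$ agree), $\nabla^{m} w|_{E} = 0$ for $m \le r_{d-t'}$. Applying this with $\bm{D}_{F}^{\bm{\theta}}$ of order $n$ shows that $g_{\bm{\theta}}$ vanishes on $E$ to order $r_{d-t'} - n$, in the sense of derivatives tangential to $F$. By \Cref{lem:continuity} applied inside $F$, the Bernstein coefficients of $g_{\bm{\theta}}$ in $\Sigma(\mathbb{I}_{F}, k-n)$ vanish whenever $|\bm{\sigma}|_{F,\setminus E} \le r_{d-t'} - n = (\bm{q}_{t,n})_{t-t'}$. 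Hence $g_{\bm{\theta}} \in \Span\{\llbracket\bm{\lambda}\rrbracket_{F}^{\bm{\sigma}} : \bm{\sigma} \in \Sigma_{0}^{\bm{q}_{t,n}}(\mathbb{I}_{F}, k-n)\}$. The degrees of freedom \eqref{eq:set-dof-2-global} on $F$ give $\int_{F} g_{\bm{\theta}} \llbracket\bm{\lambda}\rrbracket_{F}^{\bm{\sigma}} = 0$ for all such $\bm{\sigma}$. Taking the combination of these integrals with the coefficients of $g_{\bm\theta}$ itself yields $\int_{F} g_{\bm{\theta}}^{2} = 0$, so $g_{\bm{\theta}} = 0$. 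This step uses no induction on $n$; all orders $n \le r_{d-t}$ are treated at once.

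\textbf{Main obstacle.} The delicate point is verifying that the vanishing conditions propagated from lower-dimensional faces are exactly strong enough, that is, that the order $r_{d-t'}$ available on $E$ is at least $n + (\bm{q}_{t,n})_{t-t'}$. This holds by definition of $\bm{q}_{t,n}$, but it also needs $r_{d-t'} \ge r_{d-t}$ for the traces to make sense. That inequality follows from \Cref{asm:Cr-macro-element}, because $r$ is nondecreasing: $r_{2} \ge \lceil (3r_{1}-1)/2 \rceil \ge r_{1}$, and $2^{d-2} r_{2} \le \cdots \le r_{d}$ gives the rest. The altered $r_{1}$--$r_{2}$ relation matters only for $t = d-1$. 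There one must check that single-valuedness of $\nabla^{m} u_{\pm}$ on codimension-two faces $E$ holds up to $r_{2}$, which is part of the definition of the shape function space. Beyond that, only $C^{r_{1}}$ regularity within each $K_{\pm}$ is used, so the argument of \cite[Proposition~A.2]{hu2025sharpness} carries over verbatim.
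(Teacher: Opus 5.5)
Your proof is correct, but it takes a different route from the paper's.

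\textbf{The paper's route.} The paper does not induct on $\dim F$. For fixed $t \ge 1$, $n$ and $\bm{\theta}$, it observes that $\bar{\bm{r}} = \bm{q}_{t,n}$ and $\bar{k} = k-n$ satisfy \Cref{asm:Cr-element} on the $t$-simplex $F$. Only $r_2,\dots,r_d$ enter here, which is exactly why the modified $r_1$--$r_2$ relation is harmless. It then takes the element of \cite{hu2024construction} on $F$ with these parameters. Each of that element's degrees of freedom, applied to $\bm{D}_F^{\bm{\theta}} u_\pm|_F$, is rewritten as a combination of global degrees of freedom of $u_\pm$ on some $E \subseteq F$. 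This uses two facts: $\bm{D}_{F,E}^{\bm{\theta}'}\bm{D}_F^{\bm{\theta}}$ is a combination of the $\bm{D}_E^{\bm{\theta}''}$ of order $n'+n$, and $\bar{\bm{q}}_{t',n'} = \bm{q}_{t',n'+n}$. Unisolvence of that lower-dimensional element on $\mathcal{P}_{k-n}(F)$ then gives $\bm{D}_F^{\bm{\theta}} u_+|_F = \bm{D}_F^{\bm{\theta}} u_-|_F$.

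\textbf{Your route.} You feed the full trace equality on lower-dimensional faces into \Cref{lem:continuity} inside $F$; this is your inductive conclusion, not merely equality of degrees of freedom. That places $g_{\bm{\theta}}$ in the span of $\llbracket\bm{\lambda}\rrbracket_F^{\bm{\sigma}}$, $\bm{\sigma} \in \Sigma_0^{\bm{q}_{t,n}}(\mathbb{I}_F, k-n)$. You then close with the Gram argument $\int_F g_{\bm{\theta}}^2 = 0$.

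\textbf{What each buys.} Your argument is self-contained and needs only injectivity. As a result it uses none of the inequalities in \Cref{asm:Cr-macro-element}. The paper's argument leans on an external unisolvence theorem, and hence on \Cref{asm:Cr-element} for the traced parameters. In exchange it identifies the degrees of freedom seen from $F$ as a non-redundant unisolvent set for $\mathcal{P}_{k-n}(F)$. That fits the refined-intrinsic-decomposition framework used throughout the paper.

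\textbf{Overstatements in your last paragraph.} Your ``main obstacle'' paragraph claims more is needed than actually is:
\begin{itemize}
\item If $n > r_{d-t'}$, the constraint coming from $E$ is simply vacuous, so monotonicity of $\bm{r}$ is not required.
\item The $r_1$--$r_2$ relation plays no role anywhere in your argument.
\item For $t = d-1$, single-valuedness of $\nabla^n u_\pm$ on $F$ is automatic, because $F$ lies in a single subsimplex of the split; no appeal to $C^{r_1}$ regularity is needed.
\end{itemize}
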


\begin{proof}
    It suffices to check the cases for $1 \le t \le d - 1$ and the normal derivatives $\bm{D}_{F}^{\bm{\theta}}$, for each $n$ such that $0 \le n \le r_{d - t}$ and each $\bm{\theta} \in \Sigma(\mathbb{I}_{d, \setminus t}, n)$, i.e.,
    \begin{equation*}
        \bm{D}_{F}^{\bm{\theta}} u_{+}|_{F} = \bm{D}_{F}^{\bm{\theta}} u_{-}|_{F}.
    \end{equation*}
    
    Since $\bm{r}$, $\rho$, $k$, and $b$ jointly satisfy \Cref{asm:Cr-macro-element}, it follows that for $0 \le n \le r_{d - t}$, the continuity vector $\bar{\bm{r}} := (\bar{r}_{1}, \ldots, \bar{r}_{t}) = (r_{d - t + 1} - n, \ldots, r_{d} - n)$ and the polynomial degree $\bar{k} = k - n$ jointly satisfy \Cref{asm:Cr-element}. Consider the finite element for $F$ defined in \cite{hu2024construction}, corresponding to $\bar{\bm{r}}$ and $\bar{k}$, which has the set of degrees of freedom defined as follows. This set is divided into three subsets.     
\begin{enumerate}
    \item For a vertex $V$ of $F$, the degrees of freedom at $V$ for $u$ are defined for all $0 \le n' \le \bar{r}_{t} = r_{d} - n$ and each $\bm{\theta}' \in \Sigma(\mathbb{I}_{t, \setminus 0}, n')$ as
    \begin{equation*}
        \bm{D}_{F, V}^{\bm{\theta}'} u|_{V}.
    \end{equation*}
    Since $\bm{D}_{F, V}^{\bm{\theta}'} \bm{D}_{F}^{\bm{\theta}}$ can be linearly represented by $\bm{D}_{V}^{\bm{\theta}''}$ for $\bm{\theta}'' \in \Sigma(\mathbb{I}_{d, \setminus 0}, n' + n)$, and $0 \le n' + n \le r_{d}$, it follows from $\eta(u_{+}) = \eta(u_{-})$ for degree of freedom $\eta$ at $V$ that
    \begin{equation*}
        \bm{D}_{F, V}^{\bm{\theta}'} (\bm{D}_{F}^{\bm{\theta}} u_{+})|_{V} = \bm{D}_{F, V}^{\bm{\theta}'} (\bm{D}_{F}^{\bm{\theta}} u_{-})|_{V}.
    \end{equation*}
    
    \item For a $t'$-dimensional ($1 \le t' \le t - 1$) subsimplex $E$ of $F$, the degrees of freedom on $E$ for $u$ are defined for each $0 \le n' \le \bar{r}_{t - t'} = r_{d - t'} - n$, each $\bm{\theta}' \in \Sigma(\mathbb{I}_{t, \setminus t'}, n')$, and each $\bm{\sigma} \in \Sigma_{0}^{\bar{\bm{q}}_{t', n'}}(\mathbb{I}_{E}, \bar{k} - n')$ as
    \begin{equation*}
        \frac{1}{|E|} \int_{E} (\bm{D}_{F, E}^{\bm{\theta}'} u) \cdot \llbracket\bm{\lambda}\rrbracket_{E}^{\bm{\sigma}},
    \end{equation*}
    where $\bar{k} - n' = k - (n' + n)$ and the multi-index $\bar{\bm{q}}_{t', n'} := (\bar{r}_{t - t' + 1} - n', \ldots, \bar{r}_{t} - n') = (r_{d - t' + 1} - (n' + n), \ldots, r_{d} - (n' + n)) =: \bm{q}_{t', n' + n}$. Since $\bm{D}_{F, E}^{\bm{\theta}'} \bm{D}_{F}^{\bm{\theta}}$ can be linearly represented by $\bm{D}_{E}^{\bm{\theta}''}$ for $\bm{\theta}'' \in \Sigma(\mathbb{I}_{d, \setminus t'}, n' + n)$ with $0 \le n' + n \le r_{d - t'}$, and $\bm{\sigma} \in \Sigma_{0}^{\bar{\bm{q}}_{t', n'}}(\mathbb{I}_{E}, \bar{k} - n') = \Sigma_{0}^{\bm{q}_{t', n' + n}}(\mathbb{I}_{E}, k - (n' + n))$, it follows from $\eta(u_{+}) = \eta(u_{-})$ for degree of freedom $\eta$ on $E$ that
    \begin{equation*}
        \frac{1}{|E|} \int_{E} [\bm{D}_{F, E}^{\bm{\theta}'} (\bm{D}_{F}^{\bm{\theta}} u_{+})] \cdot \llbracket\bm{\lambda}\rrbracket_{E}^{\bm{\sigma}} = \frac{1}{|E|} \int_{E} [\bm{D}_{F, E}^{\bm{\theta}'} (\bm{D}_{F}^{\bm{\theta}} u_{-})] \cdot \llbracket\bm{\lambda}\rrbracket_{E}^{\bm{\sigma}}.
    \end{equation*}

    \item For the $t$-dimensional subsimplex $F$, the degrees of freedom on $F$ for $u$ are defined for each $\bm{\sigma} \in \Sigma_{0}^{\bar{\bm{r}}}(\mathbb{I}_{d}, \bar{k})$ as
    \begin{equation*}
        \frac{1}{|F|} \int_{F} u \cdot \llbracket\bm{\lambda}\rrbracket_{F}^{\bm{\sigma}},
    \end{equation*}
    where $\bar{k} = k - n$ and the multi-index $\bar{\bm{r}} := (r_{d - t + 1} - n, \ldots, r_{d} - n) = \bm{q}_{t, n}$. Since $\bm{\theta} \in \Sigma(\mathbb{I}_{d, \setminus t}, n)$ with $0 \le n \le r_{d - t}$, and $\bm{\sigma} \in \Sigma_{0}^{\bar{\bm{r}}}(\mathbb{I}_{d}, \bar{k}) = \Sigma_{0}^{\bm{q}_{t, n}}(\mathbb{I}_{E}, k - n)$, it follows from $\eta(u_{+}) = \eta(u_{-})$ for degree of freedom $\eta$ on $F$ that
    \begin{equation*}
        \frac{1}{|F|} \int_{F} (\bm{D}_{F}^{\bm{\theta}} u_{+}) \cdot \llbracket\bm{\lambda}\rrbracket_{F}^{\bm{\sigma}} = \frac{1}{|F|} \int_{F} (\bm{D}_{F}^{\bm{\theta}} u_{-}) \cdot \llbracket\bm{\lambda}\rrbracket_{F}^{\bm{\sigma}}.
    \end{equation*}
\end{enumerate}

    Furthermore, both $\bm{D}_{F}^{\bm{\theta}} u_{+}|_{F}$ and $\bm{D}_{F}^{\bm{\theta}} u_{-}|_{F}$ belong to $\mathcal{P}_{k - n}(F)$, while the set of degrees of freedom above is unisolvent for the shape function space $\mathcal{P}_{k - n}(F)$, which implies that $\bm{D}_{F}^{\bm{\theta}} u_{+}|_{F} = \bm{D}_{F}^{\bm{\theta}} u_{-}|_{F}$.
    This completes the proof.
\end{proof}

\begin{proposition}
    \label{prop:global-superspline}
        Suppose that $\bm{r}$, $\rho$, $k$, and $b$ jointly satisfy \Cref{asm:Cr-macro-element}. 
        Given a triangulation $\mathcal{T}(\Omega)$ of a $d$-dimensional domain $\Omega$, let $\mathcal{E}_{k}^{\bm{r}, \rho}(\mathcal{T}(\Omega))$ be the resulting finite element space corresponding to the set of global degrees of freedom defined in \eqref{eq:set-dof-1-global}--\eqref{eq:set-dof-3-global}, i.e.,
    \begin{equation*}
        \begin{aligned}
            \mathcal{E}_{k}^{\bm{r}, \rho}(\mathcal{T}(\Omega)) := \big\{u \in L^{2}(\Omega): & \; u|_{K} \in \mathcal{S}_{k}^{\bm{r}, \rho}(\mathcal{T}_{A}(K)) \text{ for each } d \text{-simplex } K \text{ of } \mathcal{T}(\Omega); \\
            & \; \eta(u) \text{ is single-valued on } F, \\
            & \; \text{for each } t \text{-dimensional } (0 \le t \le d - 2) \text{ subsimplex } F \text{ of } \mathcal{T}(\Omega), \\
            & \; \text{and each global degree of freedom } \eta \text{ defined on } F\big\}.
        \end{aligned}
    \end{equation*}
    The set of global degrees of freedom is a dual basis of $\mathcal{E}_{k}^{\bm{r}, \rho}(\mathcal{T}(\Omega))$. Moreover, it holds that
    \begin{equation*}
        \mathcal{E}_{k}^{\bm{r}, \rho}(\mathcal{T}(\Omega)) = \mathcal{S}_{k}^{\bm{r}, \rho}(\mathcal{T}_{A}(\Omega)),
    \end{equation*}
    where $\mathcal{S}_{k}^{\bm{r}, \rho}(\mathcal{T}_{A}(\Omega))$ is defined in \eqref{eq:superspline}.
\end{proposition}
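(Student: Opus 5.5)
The proof has two parts. First I show that the global degrees of freedom form a dual basis of $\mathcal{E}_{k}^{\bm{r}, \rho}(\mathcal{T}(\Omega))$. Then I identify this space with $\mathcal{S}_{k}^{\bm{r}, \rho}(\mathcal{T}_{A}(\Omega))$ by proving both inclusions, using \Cref{thm:unsolvence} locally and \Cref{prop:continuity} across interfaces.

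\textbf{Dual basis.} Prescribe arbitrary values for all global degrees of freedom \eqref{eq:set-dof-1-global}--\eqref{eq:set-dof-3-global}. On each $d$-simplex $K$ of $\mathcal{T}(\Omega)$, these values fix the values of the local degrees of freedom \eqref{eq:set-dof-1}--\eqref{eq:set-dof-3}, since each local degree of freedom on a subsimplex of $K$ is a global one. By \Cref{thm:unsolvence} there is a unique $u_K \in \mathcal{S}_{k}^{\bm{r}, \rho}(\mathcal{T}_{A}(K))$ taking these values. Let $u$ be the function with $u|_K = u_K$ for every $K$. By construction $u \in \mathcal{E}_{k}^{\bm{r}, \rho}(\mathcal{T}(\Omega))$, because every degree of freedom attached to a shared subsimplex is single-valued. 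Conversely, any element of $\mathcal{E}_{k}^{\bm{r}, \rho}(\mathcal{T}(\Omega))$ is recovered this way from its global degree-of-freedom values, again by \Cref{thm:unsolvence}. The map from $\mathcal{E}_{k}^{\bm{r}, \rho}(\mathcal{T}(\Omega))$ to the vector of global degree-of-freedom values is therefore a linear bijection, which is the dual-basis claim.

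\textbf{Inclusion $\mathcal{E} \subseteq \mathcal{S}$.} Let $u \in \mathcal{E}_{k}^{\bm{r}, \rho}(\mathcal{T}(\Omega))$, and let $F$ be a $t$-dimensional subsimplex with $0 \le t \le d-1$. Take two $d$-simplices $K_+, K_-$ containing $F$ and set $u_\pm = u|_{K_\pm}$. Every subsimplex $E$ of $F$ is common to $K_+$ and $K_-$, so $\eta(u_+) = \eta(u_-)$ for all global degrees of freedom $\eta$ on $E$. \Cref{prop:continuity} then gives $\nabla^n u_+|_F = \nabla^n u_-|_F$ for $0 \le n \le r_{d-t}$. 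Taking $t = d-1$ and $n \le r_1$ gives $u \in C^{r_1}(\Omega)$. Two simplices sharing only a lower-dimensional $F$ are covered by the same statement; alternatively one can chain through a sequence of simplices around $F$, each consecutive pair sharing a facet containing $F$. The cases $t \le d-2$ give single-valuedness of $\nabla^n u$ on $F$ for $n \le r_{d-t}$. Within each $K$, $u|_K \in \mathcal{S}_{k}^{\bm{r}, \rho}(\mathcal{T}_{A}(K))$ by definition. Hence $u \in \mathcal{S}_{k}^{\bm{r}, \rho}(\mathcal{T}_{A}(\Omega))$.

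\textbf{Inclusion $\mathcal{S} \subseteq \mathcal{E}$.} Let $u \in \mathcal{S}_{k}^{\bm{r}, \rho}(\mathcal{T}_{A}(\Omega))$. For $t \le d-2$, every degree of freedom on a $t$-dimensional $F$ involves only derivatives of order at most $r_{d-t}$ restricted to $F$, and these are single-valued by \eqref{eq:superspline}. So $\eta(u)$ is single-valued, and $u \in \mathcal{E}_{k}^{\bm{r}, \rho}(\mathcal{T}(\Omega))$.

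\textbf{Main obstacle.} The only subtle point is the facet degrees of freedom ($t = d-1$). \eqref{eq:set-dof-2-global} uses normal derivatives of order up to $r_1$, and these must agree from both sides for the dual-basis and continuity arguments to work. The definition of $\mathcal{E}_{k}^{\bm{r}, \rho}(\mathcal{T}(\Omega))$ only lists $t \le d-2$, but each facet degree of freedom is defined globally, i.e.\ once per facet, so I interpret single-valuedness as including facets. With that interpretation, membership in $\mathcal{S}$ gives it via $C^{r_1}$, provided the normal directions $\bm{n}_F$ are fixed globally per facet. I will state this convention explicitly.
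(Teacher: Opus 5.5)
Your proposal is correct and follows the paper's route. The dual-basis claim comes from local unisolvence (\Cref{thm:unsolvence}) together with the coincidence of local and global degrees of freedom, and the inclusion $\mathcal{E}_{k}^{\bm{r},\rho}(\mathcal{T}(\Omega)) \subseteq \mathcal{S}_{k}^{\bm{r},\rho}(\mathcal{T}_{A}(\Omega))$ comes from \Cref{prop:continuity}; you also spell out the easy reverse inclusion, which the paper leaves implicit. Your reading that single-valuedness in the definition of $\mathcal{E}_{k}^{\bm{r},\rho}(\mathcal{T}(\Omega))$ must include facets ($t=d-1$) is the right one: under the literal range $0 \le t \le d-2$, the facet functionals would not be well defined on that space, and the stated equality would fail.
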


\begin{proof}
From the coincidence of the global and the local degrees of freedom and the unisolvence of the local shape function space $\mathcal{S}_{k}^{\bm{r}, \rho}(\mathcal{T}_{A}(K))$, if $u \in \mathcal{E}_{k}^{\bm{r}, \rho}(\mathcal{T}(\Omega))$ such that $\eta(u) = 0$ for each global degree of freedom $\eta$, it directly follows that $u = 0$, since $u|_{K} = 0$ for each $d$-dimensional simplex $K$ of $\mathcal{T}(\Omega)$. Moreover, for each global degree of freedom $\eta$, there exists $u \in \mathcal{E}_{k}^{\bm{r}, \rho}(\mathcal{T}(\Omega))$ such that $\eta(u) = 1$ and $\eta'(u) = 0$ for each global degree of freedom $\eta' \ne \eta$ due to the  unisolvence of the local shape function space. Therefore, the set of global degrees of freedom is a dual basis of $\mathcal{E}_{k}^{\bm{r}, \rho}(\mathcal{T}(\Omega))$, which proves the first part of the proposition.
\Cref{prop:continuity} reveals the continuity of $\mathcal{E}_{k}^{\bm{r}, \rho}(\mathcal{T}(\Omega))$ on subsimplex $F$, which proves the second part of the proposition. 
\end{proof}
From the proposition it immediately follows that the finite element space $\mathcal{E}_{k}^{\bm{r}, \rho}(\mathcal{T}(\Omega))$ is a subspace of $C^{r_{1}}(\Omega)$.

\section{Dimension of Superspline Spaces}
\label{sec:dimension}

In this section, we study the dimension of the superspline space $\mathcal{S}_{k}^{\bm{r}, \rho}(\mathcal{T}_{A}(\Omega))$, and as a special case also the shape function space $\mathcal{S}_{k}^{\bm{r}, \rho}(\mathcal{T}_{A}(K))$. We provide several expressions for it. 

Recall the definition of $B_{t, n}$ in \eqref{eq:s-t-n}. Note that for a $t$-dimensional ($1 \le t \le d - 1$) subsimplex $F$, it holds that $\card \big(\Sigma_{0}^{\bm{q}}(\mathbb{I}_{F}, l)\big) = \card \big(\Sigma_{0}^{\bm{q}}(\mathbb{I}_{t}, l)\big)$. Consequently,
\begin{equation*}
    \card \big(\Sigma_{0}^{\bm{q}_{t, n}}(\mathbb{I}_{F}, k - n)\big) = \card \big(\Sigma_{0}^{\bm{q}_{t, n}}(\mathbb{I}_{t}, k - n)\big) = B_{t, n},
\end{equation*}
where the continuity vector $\bm{q}_{t, n} := (r_{d - t + 1} - n, \ldots, r_{d} - n)$. Then, the number of (global) degrees of freedom defined on $F$ can be obtained from the definition, which leads to the following lemma.

\begin{lemma}
    \label{lem:number-dof-F}
    Suppose that $\bm{r}$, $\rho$, $k$, and $b$ jointly satisfy \Cref{asm:Cr-macro-element}. Given a triangulation $\mathcal{T}(\Omega)$ of a $d$-dimensional domain $\Omega$ and a $t$-dimensional ($1 \le t \le d - 1$) subsimplex $F$ of $\mathcal{T}(\Omega)$, the number of (global) degrees of freedom \eqref{eq:set-dof-1}--\eqref{eq:set-dof-2} (or \eqref{eq:set-dof-1-global}--\eqref{eq:set-dof-2-global}) defined on $F$ is
    \begin{equation}
        \label{eq:number-dof-F}
        \sum_{n = 0}^{r_{d - t}} \binom{n + (d - t - 1)}{d - t - 1} B_{t, n}.
    \end{equation}
\end{lemma}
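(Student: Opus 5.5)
The count follows directly from \Cref{def:local-dofs} (equivalently \Cref{def:global-dofs}): it is the number of admissible triples $(n,\bm{\theta},\bm{\sigma})$ attached to $F$. By \eqref{eq:set-dof-2}, for a fixed $t$-dimensional subsimplex $F$ with $1 \le t \le d-1$, the functionals on $F$ are indexed by
\begin{equation*}
    \bigcup_{n=0}^{r_{d-t}} \Sigma(\mathbb{I}_{d,\setminus t}, n) \times \Sigma_{0}^{\bm{q}_{t,n}}(\mathbb{I}_{F}, k-n).
\end{equation*}
The union is disjoint because distinct $n$ give distinct derivative orders. So the count equals $\sum_{n} \card\big(\Sigma(\mathbb{I}_{d,\setminus t}, n)\big)\cdot \card\big(\Sigma_{0}^{\bm{q}_{t,n}}(\mathbb{I}_{F}, k-n)\big)$.

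\textbf{Step 1.} $\Sigma(\mathbb{I}_{d,\setminus t}, n)$ consists of all multi-indices with $d-t$ nonnegative entries summing to $n$. By stars and bars its cardinality is $\binom{n+(d-t-1)}{d-t-1}$.

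\textbf{Step 2.} $\card\big(\Sigma_{0}^{\bm{q}_{t,n}}(\mathbb{I}_{F}, k-n)\big) = B_{t,n}$, as observed just before the lemma. The defining conditions of $\Sigma_0^{\bm q}(\mathbb{I}_F,l)$ depend only on the combinatorial structure of the vertex set of $F$, so relabelling $\mathbb{I}_F$ as $\mathbb{I}_t$ is a bijection. This matches definition \eqref{eq:s-t-n}.

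\textbf{Step 3.} Substituting Steps 1 and 2 into the sum gives \eqref{eq:number-dof-F}.

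The formula counts index triples, so it equals the number of degrees of freedom only if the functionals on $F$ are linearly independent. This is guaranteed by the unisolvence in \Cref{thm:unsolvence}, together with the disjointness of the decompositions in \Cref{prop:equivalent-set-dofs}. Independence also follows on the multi-index level from the bijections in \Cref{lem:bij-lemma} applied to the two decompositions.

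Facets ($t=d-1$) need a separate check. There the degrees of freedom are split between $\mathcal{R}^{\partial}$ (for $n\le b-1$) and $\mathcal{R}^{\circ}_b$ (for $b\le n\le r_1$). Both bijections land in $\Sigma(\{d\},n)\times\Sigma_0^{\bm q_{d-1,n}}(\mathbb{I}_F,k-n)$, so the count is the same uniform formula. The only mild point is confirming that $\bm q^{\circ}_{d-1,n-b}=\bm q_{d-1,n}$, which was verified in \Cref{sec:decomposition}. There is no real obstacle.
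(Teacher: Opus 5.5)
Your Steps 1--3 match the paper's argument, which reads the count off the definition as $\sum_{n}\card\big(\Sigma(\mathbb{I}_{d,\setminus t},n)\big)\cdot B_{t,n}$, using stars and bars for the first factor and the relabelling $\mathbb{I}_F\to\mathbb{I}_t$ for the second. Your extra remarks on linear independence and on the facet case are correct but not needed for the count itself, since \eqref{eq:set-dof-2} already treats $t=d-1$ uniformly; they matter only for the later dimension formula.
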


Moreover, for a $d$-dimensional simplex $K \in \mathcal{T}(\Omega)$, note that $\bm{r}^{\circ}$ and $\rho$ jointly satisfy \Cref{asm:Cr-element}, and $\card \big(\Sigma_{0}^{\bm{r}^{\circ}}(\mathbb{I}_{K}, \rho)\big) = \card \big(\Sigma_{0}^{\bm{r}^{\circ}}(\mathbb{I}_{d}, \rho)\big)$. Then, the number of (global) degrees of freedom defined on $K$ can be derived from \eqref{eq:decompose-beta} and \Cref{lem:bij-lemma}, leading to the following lemma, whose proof is omitted.

\begin{lemma}
    \label{lem:number-dof-K}
    Suppose that $\bm{r}$, $\rho$, $k$, and $b$ jointly satisfy \Cref{asm:Cr-macro-element}. Given a triangulation $\mathcal{T}(\Omega)$ of a $d$-dimensional domain $\Omega$ and a $d$-dimensional simplex $K \in \mathcal{T}(\Omega)$, the number of (global) degrees of freedom \eqref{eq:set-dof-3} (or \eqref{eq:set-dof-3-global}) defined on $K$ is 
    \begin{equation}
        \label{eq:number-dof-K}
        \card \big(\Sigma_{0}^{\bm{r}^{\circ}}(\mathbb{I}_{d}, \rho)\big) = \binom{\rho + d}{d} - \sum_{t = 0}^{d - 1} \binom{d + 1}{t + 1} \sum_{n = b}^{r_{d - t}} \binom{n - b + (d - t - 1)}{d - t - 1} B_{t, n}.
    \end{equation}
\end{lemma}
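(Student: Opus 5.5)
The plan is to count via the refined intrinsic decomposition of $\Sigma(\mathbb{I}_{d}, \rho)$ corresponding to $\bm{r}^{\circ}$ in \Cref{decomp:B}. By \Cref{prop:asm}, $\bm{r}^{\circ}$ and $\rho$ jointly satisfy \Cref{asm:Cr-element}, so \eqref{eq:decompose-beta} is a disjoint union. Taking cardinalities gives
\begin{equation*}
    \card\big(\Sigma_{0}^{\bm{r}^{\circ}}(\mathbb{I}_{d}, \rho)\big) = \binom{\rho + d}{d} - \sum_{F \in \mathcal{T}^{\partial}(K)} \sum_{n' = 0}^{r^{\circ}_{d - t_{F}}} \card\big(\Sigma_{F, n'}^{\bm{r}^{\circ}}(\mathbb{I}_{d}, \rho)\big),
\end{equation*}
because $\card(\Sigma(\mathbb{I}_{d}, \rho)) = \binom{\rho + d}{d}$. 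It then remains to evaluate each $\card(\Sigma_{F, n'}^{\bm{r}^{\circ}}(\mathbb{I}_{d}, \rho))$ and group the faces by dimension. Since $K$ has exactly $\binom{d+1}{t+1}$ faces of dimension $t$, and the count will depend only on $t_F$, this grouping produces the factor $\binom{d+1}{t+1}$.

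For a vertex ($t = 0$), part (1) of \Cref{lem:bij-lemma}, applied with $\bar{\bm{r}} = \bm{r}^{\circ}$ and $\bar{k} = \rho$, gives $\card = \card(\Sigma(\mathbb{I}_{d,\setminus 0}, n')) = \binom{n' + d - 1}{d - 1}$, which equals $\binom{n' + d-1}{d-1} B_{0, n'+b}$ since $B_{0,\cdot} = 1$. For $1 \le t \le d-1$, part (2) gives
\begin{equation*}
    \card = \binom{n' + d - t - 1}{d - t - 1} \cdot \card\big(\Sigma_{0}^{\bm{q}^{\circ}_{t, n'}}(\mathbb{I}_{F}, \rho - n')\big),
\end{equation*}
where $\bm{q}^{\circ}_{t, n'} = (r^{\circ}_{d-t+1} - n', \ldots, r^{\circ}_{d} - n')$.

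The key identification is the shift $n = n' + b$. With it, $\bm{q}^{\circ}_{t, n'} = (r_{d-t+1} - n, \ldots, r_{d} - n) = \bm{q}_{t, n}$, and $\rho - n' = k - b - n' = k - n$. Hence
\begin{equation*}
    \card\big(\Sigma_{0}^{\bm{q}^{\circ}_{t, n'}}(\mathbb{I}_{F}, \rho - n')\big) = \card\big(\Sigma_{0}^{\bm{q}_{t, n}}(\mathbb{I}_{t}, k - n)\big) = B_{t, n},
\end{equation*}
using the index-relabeling invariance noted before \Cref{lem:number-dof-F}. The range $0 \le n' \le r^{\circ}_{d-t} = r_{d-t} - b$ becomes $b \le n \le r_{d-t}$, and the binomial factor becomes $\binom{n - b + d - t - 1}{d - t - 1}$. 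Substituting these into the cardinality identity and summing over the $\binom{d+1}{t+1}$ faces of each dimension gives \eqref{eq:number-dof-K}.

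The only delicate point is this reindexing: the shifted vector $\bm{q}^{\circ}$ at degree $\rho - n'$ must coincide exactly with $\bm{q}_{t,n}$ at degree $k-n$, so that the same numbers $B_{t,n}$ appear. The computation above shows that it does, because $\rho = k - b$. The rest is bookkeeping, with the edge case $r_{d-t} < b$ giving an empty sum consistently.
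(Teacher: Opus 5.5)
Your proposal is correct and follows the route the paper indicates: take cardinalities in the disjoint union \eqref{eq:decompose-beta}, which is valid by \Cref{prop:asm}, and evaluate each piece with \Cref{lem:bij-lemma}, using the shift $n = n' + b$ so that $\bm{q}^{\circ}_{t,n'} = \bm{q}_{t,n}$ and $\rho - n' = k - n$. The paper omits these details, and your bookkeeping supplies them faithfully; the empty-range case you mention never occurs, since \Cref{asm:Cr-macro-element} gives $b \le r_1 \le r_{d-t}$.
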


Actually, for $1 \le t \le d - 1$, the number $B_{t, n} := \card \big(\Sigma_{0}^{\bm{q}_{t, n}}(\mathbb{I}_{t}, k - n)\big)$ can be computed recursively through the following lemma, starting from $B_{0, n} := 1$.

\begin{lemma}
    \label{lem:number-B-t-n}
    Suppose that $\bm{r}$, $\rho$, $k$, and $b$ jointly satisfy \Cref{asm:Cr-macro-element}. For each $t$ such that $1 \le t \le d - 1$, and each $n$ such that $0 \le n \le r_{d - t}$, the number $B_{t, n} := \card \big(\Sigma_{0}^{\bm{q}_{t, n}}(\mathbb{I}_{t}, k - n)\big)$ equals
    \begin{equation*}
        B_{t, n} = \binom{k - n + t}{t} - \sum_{t' = 0}^{t - 1} \binom{t + 1}{t' + 1} \sum_{n' = n}^{r_{d - t'}} \binom{n' - n + (t - t' - 1)}{t - t' - 1} B_{t', n'}.
    \end{equation*}
\end{lemma}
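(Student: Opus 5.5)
We prove the recursion by a counting argument: apply the refined intrinsic decomposition \eqref{eq:decompose-full} to the $t$-dimensional simplex itself and count the pieces. Fix $1 \le t \le d-1$ and $0 \le n \le r_{d-t}$. Set $\bar{\bm r} := \bm q_{t,n} = (r_{d-t+1}-n, \ldots, r_d-n)$ and $\bar k := k-n$. By \Cref{asm:Cr-macro-element}, the pair $(\bar{\bm r}, \bar k)$ satisfies \Cref{asm:Cr-element} in dimension $t$:
\begin{itemize}
\item the doubling chain $2^{t-2}\bar r_2 \le \cdots \le \bar r_t$ holds because subtracting $n$ from each entry of a doubling chain of nonnegative numbers preserves the inequalities;
\item $2\bar r_1 \le \bar r_2$ reads $2(r_{d-t+1}-n) \le r_{d-t+2}-n$, which follows from $2 r_{d-t+1} \le r_{d-t+2}$ when $d-t+1 \ge 2$;
\item $2\bar r_t + 1 \le \bar k$ follows from $2r_d+1 \le k$ and $n \ge 0$.
\end{itemize}
This is the same verification used in the proof of \Cref{prop:continuity}. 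Nonnegativity of the entries holds since $n \le r_{d-t} \le r_{d-t'}$ for $t' < t$.

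Next we apply the decomposition \eqref{eq:decompose-full}, in dimension $t$, to the simplex with index set $\mathbb I_t$. This writes $\Sigma(\mathbb I_t, k-n)$, of cardinality $\binom{k-n+t}{t}$, as the disjoint union of $\Sigma_0^{\bm q_{t,n}}(\mathbb I_t,k-n)$ and the sets $\Sigma^{\bar{\bm r}}_{E,m}(\mathbb I_t, k-n)$. Here $E$ ranges over $t'$-dimensional faces of the $t$-simplex with $0\le t'\le t-1$, and $0 \le m \le \bar r_{t-t'} = r_{d-t'}-n$. There are $\binom{t+1}{t'+1}$ such faces $E$ for each $t'$.

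We then count each piece with \Cref{lem:bij-lemma}, again in dimension $t$. It gives
\[
\card\big(\Sigma^{\bar{\bm r}}_{E,m}\big) = \binom{m+t-t'-1}{t-t'-1}\cdot \card\big(\Sigma_0^{\bar{\bm q}_{t',m}}(\mathbb I_E,\bar k-m)\big).
\]
The first factor counts $\Sigma(\mathbb I_{t,\setminus t'},m)$, which has $t-t'$ entries summing to $m$. For $t'=0$ the second factor is $1$, by part (1) of the lemma. The key identification is
\[
\bar{\bm q}_{t',m} = (\bar r_{t-t'+1}-m, \ldots, \bar r_t - m) = (r_{d-t'+1}-(n+m), \ldots, r_d-(n+m)) = \bm q_{t',n+m}, \qquad \bar k - m = k-(n+m).
\]
Hence the second factor equals $B_{t',n+m}$. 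Substituting $n' = n+m$, which ranges over $n \le n' \le r_{d-t'}$, and subtracting all pieces from $\binom{k-n+t}{t}$ gives exactly the stated formula.

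The main obstacle is bookkeeping rather than anything conceptual. We must check that the shifted continuity vector seen from a face $E$ inside the $t$-simplex is exactly $\bm q_{t',n'}$, which is what makes the recursion close up on the numbers $B_{t',n'}$. We must also confirm that the range of $m$ given by the decomposition, namely $m \le \bar r_{t-t'}$, translates to $n' \le r_{d-t'}$. We also need that the definition of $B_{t',n'}$ for $t' \ge 1$ uses $\Sigma_0$ over any $t'$-face, not just over $\mathbb I_{t'}$; this is justified by the cardinality invariance noted before \Cref{lem:number-dof-F}.
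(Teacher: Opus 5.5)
Your proposal is correct and follows essentially the same route as the paper. Both arguments apply the refined intrinsic decomposition to $\Sigma(\mathbb{I}_t, k-n)$ with $\bar{\bm r} = \bm q_{t,n}$ and $\bar k = k-n$, count each piece via the bijection lemma, identify $\bar{\bm q}_{t',m} = \bm q_{t',n+m}$, and reindex with $n' = n+m$. Your bookkeeping is slightly cleaner than the paper's: you write $B_{t',n+m}$ where the paper's proof has the typo $B_{t,n'+n}$, and you state explicitly the nonnegativity check and the face-independence of the cardinality.
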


\begin{proof}
    Consider the continuity vector $\bar{\bm{r}} := (\bar{r}_{1}, \ldots, \bar{r}_{t}) = (r_{d - t + 1} - n, \ldots, r_{d} - n) =: \bm{q}_{t, n}$ and the polynomial degree $\bar{k} := k - n$, which jointly satisfy \Cref{asm:Cr-element}. By the refined intrinsic decomposition \eqref{eq:decompose-full} of $\Sigma(\mathbb{I}_{t}, \bar{k})$ corresponding to $\bar{\bm{r}}$, it follows from \Cref{lem:bij-lemma} that
    \begin{equation*}
        \begin{aligned}
            \binom{\bar{k} + t}{t} & = \card \big(\Sigma_{0}^{\bar{\bm{r}}}(\mathbb{I}_{t}, \bar{k})\big) + \sum_{t' = 0}^{t - 1} \binom{t + 1}{t' + 1} \sum_{n' = 0}^{\bar{r}_{t - t'}} \binom{n' + (t - t' - 1)}{t - t' - 1} \card \big(\Sigma_{0}^{\bar{\bm{q}}_{t', n'}}(\mathbb{I}_{t'}, \bar{k} - n')\big),
        \end{aligned}
    \end{equation*}
    where the continuity vector $\bar{\bm{q}}_{t', n'} := (\bar{r}_{t - t' + 1} - n', \ldots, \bar{r}_{t} - n') = (r_{d - t' + 1} - (n' + n), \ldots, r_{d} - (n' + n)) =: \bm{q}_{t', n' + n}$. Moreover, it holds that
    \begin{equation*}
        \card \big(\Sigma_{0}^{\bar{\bm{q}}_{t', n'}}(\mathbb{I}_{t'}, \bar{k} - n')\big) = \card \big(\Sigma_{0}^{\bm{q}_{t', n' + n}}(\mathbb{I}_{t'}, k - (n' + n))\big) = B_{t, n' + n},
    \end{equation*}
    and it follows that
    \begin{equation*}
        \begin{aligned}
            B_{t, n} & = \binom{\bar{k} + t}{t} - \sum_{t' = 0}^{t - 1} \binom{t + 1}{t' + 1} \sum_{n' = 0}^{\bar{r}_{t - t'}} \binom{n' + (t - t' - 1)}{t - t' - 1} \card \big(\Sigma_{0}^{\bar{\bm{q}}_{t', n'}}(\mathbb{I}_{t'}, \bar{k} - n')\big) \\
            & = \binom{k - n + t}{t} - \sum_{t' = 0}^{t - 1} \binom{t + 1}{t' + 1} \sum_{n' = 0}^{r_{d - t'} - n} \binom{n' + (t - t' - 1)}{t - t' - 1} B_{t, n' + n} \\
            & = \binom{k - n + t}{t} - \sum_{t' = 0}^{t - 1} \binom{t + 1}{t' + 1} \sum_{n' = n}^{r_{d - t'}} \binom{n' - n + (t - t' - 1)}{t - t' - 1} B_{t', n'}.
        \end{aligned}
    \end{equation*}
    This concludes the proof.
\end{proof}

By \Cref{lem:number-B-t-n}, the term $\binom{\rho + d}{d}$ in \eqref{eq:number-dof-K} can be expressed in terms of $\binom{k + d}{d}$ and $B_{t, n}$. This yields the following corollary, which provides an alternative formulation for $\card \big(\Sigma_{0}^{\bm{r}^{\circ}}(\mathbb{I}_{d}, \rho)\big)$.

\begin{corollary}
    \label{coro:b-boundary}
    Suppose that $\bm{r}$, $\rho$, $k$, and $b$ jointly satisfy \Cref{asm:Cr-macro-element}. It holds that
    \begin{equation*}
        \begin{aligned}
            \binom{k + d}{d} - \binom{\rho + d}{d} = & \; \sum_{t = 0}^{d - 1} \binom{d}{t + 1} \sum_{n = 0}^{r_{d - t}} \binom{n + (d - t - 1)}{d - t - 1} B_{t, n}  \\
            & \; - \sum_{t = 0}^{d - 1} \binom{d}{t + 1} \sum_{n = b}^{r_{d - t}} \binom{n - b + (d - t - 1)}{d - t - 1} B_{t, n}.
        \end{aligned}
    \end{equation*}
    Moreover, given a triangulation $\mathcal{T}(\Omega)$ of a $d$-dimensional domain $\Omega$ and a $d$-dimensional simplex $K \in \mathcal{T}(\Omega)$, the number of (global) degrees of freedom \eqref{eq:set-dof-3} (or \eqref{eq:set-dof-3-global}) defined on $K$ can also be expressed as
    \begin{equation}
        \label{eq:number-dof-K-another}
        \begin{aligned}
             \card \big(\Sigma_{0}^{\bm{r}^{\circ}}(\mathbb{I}_{d}, \rho)\big) = & \; \binom{k + d}{d} - \sum_{t = 0}^{d - 1} \binom{d}{t + 1} \sum_{n = 0}^{r_{d - t}} \binom{n + (d - t - 1)}{d - t - 1} B_{t, n} \\
            & - \sum_{t = 0}^{d - 1} \binom{d}{t} \sum_{n = b}^{r_{d - t}} \binom{n - b + (d - t - 1)}{d - t - 1} B_{t, n}.
        \end{aligned}
    \end{equation}
\end{corollary}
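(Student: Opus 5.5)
The plan is to compare the two refined intrinsic decompositions of $\Sigma(\mathbb{I}_{d}, \bar k)$: one with $\bar k = k$, the other with $\bar k = \rho$. Both use the same lower-dimensional counts $B_{t,n}$, so the identity becomes a bookkeeping of layers.

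\textbf{Step 1: $\binom{k+d}{d}$ via $\Sigma(\mathbb{I}_{d},k)$.} Decompose the full set $\Sigma(\mathbb{I}_{d}, k)$ using an auxiliary continuity vector that agrees with $\bm r$ on $r_2,\dots,r_d$ but cuts the facet layers appropriately. By \Cref{lem:number-B-t-n} (applied formally with $t=d$), $\binom{k+d}{d}$ equals $\card \Sigma_0$ plus $\sum_{t}\binom{d+1}{t+1}\sum_{n}\binom{n+d-t-1}{d-t-1}B_{t,n}$.

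\textbf{Step 2: $\binom{\rho+d}{d}$ via \Cref{decomp:B}.} Expand $\binom{\rho+d}{d}$ with \eqref{eq:decompose-beta} and \Cref{lem:bij-lemma}. The $(F,n)$ layers for $\bm r^\circ$ and degree $\rho$ correspond, via the shift $n\mapsto n+b$ (as in $\mathcal R^\circ_b$), to $B_{t,n}$ with $n$ running from $b$ to $r_{d-t}$. This is exactly the content of \eqref{eq:number-dof-K}.

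\textbf{Step 3: a facet-by-facet difference.} A cleaner route is to write the difference as a sum over vertices. The $d+1$ terms split as one vertex $V_j$ plus the opposite facet $F_{\setminus j}$.
\begin{itemize}
\item Slice $\Sigma(\mathbb{I}_{d},k)$ by $\alpha_j$. Those $\bm\alpha$ with $\alpha_j\le b-1$ number $\binom{k+d}{d}-\binom{\rho+d}{d}$, since $\alpha_j\ge b$ corresponds bijectively to $\Sigma(\mathbb{I}_{d},\rho)$.
\item Now view $\{\alpha_j = m\}$, for $0\le m\le b-1$, as a copy of $\Sigma(\mathbb{I}_{F_{\setminus j}}, k-m)$, a $(d-1)$-simplex of degree $k-m$.
\item Decompose each copy by \eqref{eq:decompose-full} with the vector $\bm q_{d-1,m}$. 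Its subsimplices $E$ of the facet, of dimension $t$, appear with multiplicity $\binom{d}{t+1}$ as faces of $F_{\setminus j}$.
\item Each $E$ contributes $\sum_{n'}\binom{n'+d-t-2}{d-t-2}B_{t,n'+m}$, and the facet interior contributes $B_{d-1,m}$.
\item Summing over $m=0,\dots,b-1$ and applying the hockey-stick identity $\sum_{m}\binom{n-m+d-t-2}{d-t-2}=\binom{n+d-t-1}{d-t-1}-\binom{n-b+d-t-1}{d-t-1}$ gives exactly $\sum_t\binom{d}{t+1}\big[\sum_{n=0}^{r_{d-t}}\binom{n+d-t-1}{d-t-1}B_{t,n}-\sum_{n=b}^{r_{d-t}}\binom{n-b+d-t-1}{d-t-1}B_{t,n}\big]$.
\end{itemize}
Here $\binom{\cdot}{\cdot}$ vanishes for a negative upper entry in the range $n<b$, which is why the second sum starts at $b$. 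The $t=d-1$ terms reduce to $\sum_{n=0}^{b-1}B_{d-1,n}$, consistent with this.

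\textbf{Step 4: the second formula.} Substitute the first identity into \eqref{eq:number-dof-K} to replace $\binom{\rho+d}{d}$. Then use $\binom{d+1}{t+1}-\binom{d}{t+1}=\binom{d}{t}$ to obtain \eqref{eq:number-dof-K-another}.

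\textbf{Main obstacle.} Step 3 requires \Cref{asm:Cr-element} for $\bm q_{d-1,m}$ and degree $k-m$, which follows as in \Cref{prop:continuity}. The delicate point is the index shift: the $n$ in the $B_{t,n}$ produced by the slice $\alpha_j=m$ must really be $n'+m$. This follows from the definition $\bm q_{t,n}=(r_{d-t+1}-n,\dots)$ exactly as in the proof of \Cref{lem:number-B-t-n}.
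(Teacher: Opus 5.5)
Your Steps 3 and 4 are correct and are essentially the paper's proof. Slicing by $\alpha_j=m$ for $0\le m\le b-1$ gives $\binom{k+d}{d}-\binom{\rho+d}{d}=\sum_{m=0}^{b-1}\binom{k-m+d-1}{d-1}$, and decomposing each slice is exactly \Cref{lem:number-B-t-n} with $t=d-1$ and $n=m$ (the paper cites the lemma rather than re-deriving it); the hockey-stick summation and $\binom{d}{t}=\binom{d+1}{t+1}-\binom{d}{t+1}$ then finish as in the paper. Steps 1--2 should simply be dropped: Step 1 is not justified as written, since $\bm r$ and $k$ need not satisfy \Cref{asm:Cr-element} and \Cref{lem:number-B-t-n} is only stated for $t\le d-1$, and neither step is used later beyond citing \eqref{eq:number-dof-K}.
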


\begin{proof}
    It follows from \Cref{lem:number-B-t-n} that
    \begin{equation*}
        \begin{aligned}
            \binom{k + d}{d} - \binom{\rho + d}{d} = & \; \sum_{n = 0}^{b - 1} \binom{k - n + (d - 1)}{d - 1} \\
            = & \; \sum_{n = 0}^{b - 1} \bigg(B_{d - 1, n} + \sum_{t = 0}^{d - 2} \binom{d}{t + 1} \sum_{n' = n}^{r_{d - t}} \binom{n' - n + (d - t - 2)}{d - t - 2} B_{t, n'}\bigg) \\
            = & \; \sum_{n = 0}^{b - 1} B_{d - 1, n} + \sum_{t = 0}^{d - 2} \binom{d}{t + 1} \sum_{n = 0}^{b - 1} \sum_{n' = n}^{r_{d - t}} \binom{n' - n + (d - t - 2)}{d - t - 2} B_{t, n'}.
        \end{aligned}
    \end{equation*}
    Note that 
    \begin{equation*}
        \begin{aligned}
            \sum_{n = b'}^{r_{d - t}} \sum_{n' = n}^{r_{d - t}} \binom{n' - n + (d - t - 2)}{d - t - 2} B_{t, n'} = & \; \sum_{n = b'}^{r_{d - t}} \sum_{n' = n}^{r_{d - t}} \binom{n' - n + (d - t - 2)}{d - t - 2} B_{t, n'} \\
            = & \; \sum_{n = b'}^{r_{d - t}} \sum_{n' = b'}^{n} \binom{n - n' + (d - t - 2)}{d - t - 2} B_{t, n} \\
            = & \; \sum_{n = b'}^{r_{d - t}} \binom{n - b' + (d - t - 1)}{d - t - 1} B_{t, n}. \\
        \end{aligned}
    \end{equation*}
    Therefore, it holds that 
    \begin{equation*}
        \begin{aligned}
            \binom{k + d}{d} - \binom{\rho + d}{d} = & \; \sum_{n = 0}^{r_{1}} B_{d - 1, n} + \sum_{t = 0}^{d - 2} \binom{d}{t + 1} \sum_{n = 0}^{r_{d - t}} \binom{n + (d - t - 1)}{d - t - 1} B_{t, n} \\
            & \; - \sum_{n = b}^{r_{1}} B_{d - 1, n} - \sum_{t = 0}^{d - 2} \binom{d}{t + 1} \sum_{n = b}^{r_{d - t}} \binom{n - b + (d - t - 1)}{d - t - 1} B_{t, n} \\
            = & \; \sum_{t = 0}^{d - 1} \binom{d}{t + 1} \sum_{n = 0}^{r_{d - t}} \binom{n + (d - t - 1)}{d - t - 1} B_{t, n}  \\
            & \; - \sum_{t = 0}^{d - 1} \binom{d}{t + 1} \sum_{n = b}^{r_{d - t}} \binom{n - b + (d - t - 1)}{d - t - 1} B_{t, n}.
        \end{aligned}
    \end{equation*}
    The expression in \eqref{eq:number-dof-K-another} follows immediately from \eqref{eq:number-dof-K} and the identity
    \begin{equation*}
        \binom{d}{t} = \binom{d + 1}{t + 1} - \binom{d}{t + 1}.
    \end{equation*}
    This concludes the proof.
\end{proof}

Combining \Cref{lem:number-dof-F}, \Cref{lem:number-dof-K}, and \Cref{coro:b-boundary}, we can prove \Cref{thm:dim-superspline}.

\begin{proof}[Proof of \Cref{thm:dim-superspline}]
    Since $N_{t}$ is the number of $t$-dimensional $(0 \le t \le d)$ simplices present in $\mathcal{T}(\Omega)$, and the numbers of global degrees of freedom are given by \eqref{eq:number-dof-F}--\eqref{eq:number-dof-K-another}, it follows that
    \begin{equation*}
        \begin{aligned}
            \dim \mathcal{S}_{k}^{\bm{r}, \rho}(\mathcal{T}_{A}(\Omega)) = & \binom{\rho + d}{d} N_{d} + \sum_{t = 0}^{d - 1} N_{t} \sum_{n = 0}^{r_{d - t}} \binom{n + (d - t - 1)}{d - t - 1} B_{t, n} \\
            & \; - \sum_{t = 0}^{d - 1} \binom{d + 1}{t + 1} N_{d} \sum_{n = b}^{r_{d - t}} \binom{n - b + (d - t - 1)}{d - t - 1} B_{t, n},
        \end{aligned}
    \end{equation*}
    and
    \begin{equation*}
        \begin{aligned}
            \dim \mathcal{S}_{k}^{\bm{r}, \rho}(\mathcal{T}_{A}(\Omega)) = & \binom{k + d}{d} N_{d} + \sum_{t = 0}^{d - 1} \bigg(N_{t} - \binom{d}{t + 1} N_{d}\bigg) \sum_{n = 0}^{r_{d - t}} \binom{n + (d - t - 1)}{d - t - 1} B_{t, n} \\
            & \; - \sum_{t = 0}^{d - 1} \binom{d}{t} N_{d} \sum_{n = b}^{r_{d - t}} \binom{n - b + (d - t - 1)}{d - t - 1} B_{t, n}.
        \end{aligned}
    \end{equation*}
    This completes the proof.
\end{proof}

For the special case $\Omega = K$ (i.e., a single $d$-dimensional simplex), the number of $t$-dimensional ($0 \le t \le d$) simplices present in $K$ is $N_{t} = \binom{d + 1}{t + 1}$, and the dimension of the shape function space $\mathcal{S}_{k}^{\bm{r}, \rho}(\mathcal{T}_{A}(K))$ is given by
\begin{equation}
    \label{eq:dim-shape}
    \begin{aligned}
        \dim \mathcal{S}_{k}^{\bm{r}, \rho}(\mathcal{T}_{A}(K)) = & \; \binom{\rho + d}{d} + \sum_{t = 0}^{d - 1} \binom{d + 1}{t + 1} \sum_{n = 0}^{r_{d - t}} \binom{n + (d - t - 1)}{d - t - 1} B_{t, n} \\
        & \; - \sum_{t = 0}^{d - 1} \binom{d + 1}{t + 1} \sum_{n = b}^{r_{d - t}} \binom{n - b + (d - t - 1)}{d - t - 1} B_{t, n} \\
        = & \; \binom{k + d}{d} + \sum_{t = 0}^{d - 1} \binom{d}{t} \sum_{n = 0}^{r_{d - t}} \binom{n + (d - t - 1)}{d - t - 1} B_{t, n} \\
        & \; - \sum_{t = 0}^{d - 1} \binom{d}{t} \sum_{n = b}^{r_{d - t}} \binom{n - b + (d - t - 1)}{d - t - 1} B_{t, n}.
    \end{aligned}
\end{equation}

Recall that $\mathcal{S}_{k}^{\bm{r}, \rho}(\mathcal{T}_{A}(K))$ is a $C^{r_1}$ spline space of degree $k$ over the simplex $K$, with supersmoothness at the lower-dimensional subsimplices $F$ of $K$ and at the split point $V_{A}$.
Now, let $\mathcal{S}_{k}^{r_1}(\mathcal{T}_{A}(K))$ be the $C^{r_1}$ spline space  of degree $k$ over the simplex $K$, without imposing supersmoothness. It is clear that $\mathcal{S}_{k}^{\bm{r}, \rho}(\mathcal{T}_{A}(K)) \subseteq \mathcal{S}_{k}^{r_1}(\mathcal{T}_{A}(K))$.
From \cite{schenck2014} we know that its dimension is given by
\begin{equation*}
\dim \mathcal{S}_{k}^{r_1}(\mathcal{T}_{A}(K)) = \binom{k+d}{d}
+ \begin{dcases}
 d\binom{k+d-m(d+1)}{d}, & r_1 = 2m-1,\\
 \sum_{t=0}^{d-1}\binom{k+t-m(d+1)}d, &
r_1 = 2m.
\end{dcases}
\end{equation*}
According to \cite{floater2020}, this space has intrinsic supersmoothness at the split point $V_{A}$ of order
\begin{equation*}
    \rho^* := r_1 + (d-1)\bigg\lfloor\frac{r_{1} + 1}{2}\bigg\rfloor.
\end{equation*}
\Cref{asm:Cr-macro-element} ensures that the split point continuity $\rho$ of $\mathcal{S}_{k}^{\bm{r}, \rho}(\mathcal{T}_{A}(K))$ is never smaller than this intrinsic value. Indeed,
\begin{equation*}
    \rho = k - b
    \ge r_{1} + (2^{d - 1} - 1) r_{2}
    \ge r_{1} + (d - 1) \bigg(\bigg\lceil\frac{r_{1} + 1}{2}\bigg\rceil + r_{1} - 1\bigg)
    \ge \rho^*.
\end{equation*}

\section{Conclusion}
\label{sec:conclusion}

In this paper, we presented a unified $C^r$ conforming finite element construction on the Alfeld split for arbitrary smoothness $r \in \mathbb{N}_{0}$ in any dimension $d \in \mathbb{N}$. Specifically, for any continuity vector $\bm{r} := (r_{1}, r_{2}, \ldots, r_{d}) \in \mathbb{N}_{0}^{d}$ and polynomial degree $k \in \mathbb{N}_{0}$ satisfying \Cref{asm:Cr-macro-element}, we provided a set of degrees of freedom such that the resulting finite element space has $C^{r_s}$ continuity on $(d-s)$-subsimplices; see \Cref{def:local-dofs} and \Cref{thm:Cr-macro-element}. The $C^r$ macro-element results in \cite{lai2013,lai2001} can be seen as special instances of our general construction for $d = 2,3$.
Furthermore, the conditions on $\bm{r}$ and $k$ are relaxed compared to the recent finite element construction proposed in \cite{hu2024construction} for general $d \geq 1$ and, as a consequence, it allows for smaller minimum degrees.

A key ingredient in our analysis is the pair of refined intrinsic decompositions for the Alfeld split, which enabled us to rewrite the degrees of freedom in an equivalent form. It should be emphasized that the degrees of freedom are linearly dependent as linear functionals on $\mathcal{P}_{k}(K)$ for a given simplex $K$, and this constitutes the main difficulty in proving the unisolvence of the full set of degrees of freedom. To overcome this difficulty, we separated them in two groups, each of which corresponds to a refined intrinsic decomposition.
As a side result, we also built an explicit basis for the local shape function space; see \Cref{thm:unsolvence}.

Finally, we discussed the dimension of the corresponding superspline space in detail and provided explicit expressions for it; see \Cref{thm:dim-superspline}. Finding the dimension of smooth spline spaces over general triangulations is a challenging problem \cite{lai2007} and the topic is barely touched for $d > 3$. The use of refined intrinsic decompositions could be a useful tool for their study.

\section*{Acknowledgements}

H.~Speleers is a member of the research group GNCS (Gruppo Nazionale per il Calcolo Scientifico) of INdAM (Istituto Nazionale di Alta Matematica).
This work has been partially supported by the MUR Excellence Department Project MatMod@TOV (CUP E83C23000330006) awarded to the Department of Mathematics of the University of Rome Tor Vergata
and by the Italian Research Center on High Performance Computing, Big Data and Quantum Computing (CUP E83C22003230001).

Q.~Wu was supported by NSFC project No.~125B2020.



\bibliographystyle{plain}
\bibliography{ref}

@article{alfeld1984a,
  title={A trivariate {C}lough--{T}ocher scheme for tetrahedral data},
  author={Alfeld, P.},
  journal={Computer Aided Geometric Design},
  volume={1},
  number={2},
  pages={169--181},
  year={1984},
}

@article{alfeld1984b,
  title={A bivariate {$C^2$} {C}lough--{T}ocher scheme},
  author={Alfeld, P.},
  journal={Computer Aided Geometric Design},
  volume={1},
  number={3},
  pages={257--267},
  year={1984},
}

@article{alfeld2002a,
  title={Smooth macro-elements based on {C}lough--{T}ocher triangle splits},
  author={Alfeld, P. and Schumaker, L. L.},
  journal={Numerische Mathematik},
  volume={90},
  number={4},
  pages={597--616},
  year={2002},
}

@article{alfeld2002b,
  title={Smooth macro-elements based on {P}owell--{S}abin triangle splits},
  author={Alfeld, P. and Schumaker, L. L.},
  journal={Advances in Computational Mathematics},
  volume={16},
  pages={29--46},
  year={2002},
}

@article{alfeld2005,
  title={A {$C^2$} trivariate macro-element based on the {C}lough--{T}ocher-split of a tetrahedron},
  author={Alfeld, P. and Schumaker, L. L.},
  journal={Computer Aided Geometric Design},
  volume={22},
  number={7},
  pages={710--721},
  year={2005},
}

@article{alfeld2009,
  title={Two tetrahedral {$C^1$} cubic macro elements},
  author={Alfeld, P. and Sorokina, T.},
  journal={Journal of Approximation Theory},
  volume={157},
  number={1},
  pages={53--69},
  year={2009},
}

@article{argyris1968,
  title={The {TUBA} family of plate elements for the matrix displacement method},
  author={Argyris, J. H. and Fried, I. and Scharpf, D. W.},
  journal={The Aeronautical Journal},
  volume={72},
  number={692},
  pages={701--709},
  year={1968},
}

@incollection{awanou2002,
  title={{$C^1$} quintic spline interpolation over tetrahedral partitions},
  author={Awanou, G. and Lai, M. J.},
  booktitle={Approximation Theory X: Wavelets, Splines, and Applications},
  pages={1--16},
  year={2002},
  publisher={Vanderbilt University Press},
}

@book{de1986b,
  title={B-form basics},
  author={Boor, C. de},
  year={1986},
  publisher={Mathematics Research Center, University of Wisconsin-Madison}
}

@article{ciarlet1974,
  title={Sur l'\'el\'ement de {C}lough et {T}ocher},
  author={Ciarlet, P. G.},
  journal={RAIRO Analyse Num\'erique},
  volume={8},
  number={2},
  pages={19--27},
  year={1974},
}

@incollection{clough1965,
  title={Finite element stiffness matrices for analysis of plate bending},
  author={Clough, R. W. and Tocher, J. L.},
  booktitle={Proceedings of Conference on Matrix Methods in Structural Mechanics},
  pages={515--545},
  year={1965},
  publisher={Wright-Patterson Air Force Base, Ohio},
}

@article{douglas1979,
  title={A family of {$C^1$} finite elements with optimal approximation properties for various {G}alerkin methods for 2nd and 4th order problems},
  author={Douglas, J. and Dupont, T. and Percell, P. and Scott, R.},
  journal={RAIRO Analyse Num\'erique},
  volume={13},
  number={3},
  pages={227--255},
  year={1979},
}

@article{floater2020,
  title={A characterization of supersmoothness of multivariate splines},
  author={Floater, M. S. and Hu, K.},
  journal={Advances in Computational Mathematics},
  volume={46},
  pages={70},
  year={2020},
}

@article{groselj2016,
  title={A normalized representation of super splines of arbitrary degree on {P}owell--{S}abin triangulations},
  author={Gro\v{s}elj, J.},
  journal={BIT Numerical Mathematics},
  volume={56},
  pages={1257--1280},
  year={2016},
}

@article{groselj2022,
  title={Generalized {$C^1$} {C}lough--{T}ocher splines for {CAGD} and {FEM}},
  author={Gro\v{s}elj, J. and Knez, M.},
  journal={Computer Methods in Applied Mechanics and Engineering},
  volume={395},
  pages={114983},
  year={2022},
}

@article{hu2024construction,
  title={A construction of {$C^r$} conforming finite element spaces in any dimension},
  author={Hu, J. and Lin, T. and Wu, Q.},
  journal={Foundations of Computational Mathematics},
  volume={24},
  number={6},
  pages={1941--1977},
  year={2024},
}

@article{hu2025sharpness,
  title={The sharpness condition for constructing a finite element from a superspline},
  author={Hu, J. and Lin, T. and Wu, Q. and Yuan, B.},
  journal={Mathematics of Computation},
  year={2025}
}

@incollection{kolesnikov2014,
  title={Multivariate {$C^1$}-continuous splines on the {A}lfeld split of a simplex},
  author={Kolesnikov, A. and Sorokina, T.},
  booktitle={Approximation theory {XIV}},
  series={Springer Proceedings in Mathematics \& Statistics},
  volume={83},
  pages={283--294},
  year={2014},
  publisher={Springer, Cham},
}

@article{lai2013,
  title={A {$C^r$} trivariate macro-element based on the {A}lfeld split of tetrahedra},
  author={Lai, M.-J. and Matt, M. A.},
  journal={Journal of Approximation Theory},
  volume={175},
  pages={114--131},
  year={2013},
}

@article{lai2001,
  title={Macro-elements and stable local bases for splines on {C}lough--{T}ocher triangulations},
  author={Lai, M.-J. and Schumaker, L. L.},
  journal={Numerische Mathematik},
  volume={88},
  number={1},
  pages={105--119},
  year={2001},
}

@article{lai2003,
  title={Macro-elements and stable local bases for splines on {P}owell--{S}abin triangulations},
  author={Lai, M.-J. and Schumaker, L. L.},
  journal={Mathematics of Computation},
  volume={72},
  number={241},
  pages={335--354},
  year={2003},
}

@book{lai2007,
  title={Spline Functions on Triangulations},
  author={Lai, M.-J. and Schumaker, L. L.},
  series={Encyclopedia of Mathematics and its Applications},
  year={2007}, 
  publisher={Cambridge University Press},
}

@article{lyche2022,
  title={Construction of {$C^2$} cubic splines on arbitrary triangulations},
  author={Lyche, T. and Manni, C. and Speleers, H.},
  journal={Foundations of Computational Mathematics},
  volume={22},
  pages={1309--1350},
  year={2022},
}

@article{lyche2024,
  title={A local simplex spline basis for {$C^3$} quartic splines on arbitrary triangulations},
  author={Lyche, T. and Manni, C. and Speleers, H.},
  journal={Applied Mathematics and Computation},
  volume={462},
  pages={128330},
  year={2024},
}

@article{lyche2025,
  title={A {$C^1$} simplex-spline basis for the {A}lfeld split in {$\mathbb{R}^s$}},
  author={Lyche, T. and Merrien, J.-L. and Speleers, H.},
  journal={Computer Aided Geometric Design},
  volume={117},
  pages={102412},
  year={2025},
}

@article{powell1977,
  title={Piecewise quadratic approximations on triangles},
  author={Powell, M. J. D. and Sabin, M. A.},
  journal={ACM Transactions on Mathematical Software},
  volume={3},
  pages={316--325},
  year={1977},
}

@article{schenck2014,
  title={Splines on the {A}lfeld split of a simplex and type {A} root systems},
  author={Schenck, H.},
  journal={Journal of Approximation Theory},
  volume={182},
  pages={1--6},
  year={2014},
}

@article{schumaker2006,
  title={Smooth macro-elements on {P}owell--{S}abin-12 splits},
  author={Schumaker, L. L. and Sorokina, T.},
  journal={Mathematics of Computation},
  volume={75},
  number={254},
  pages={711--726},
  year={2006},
}

@article{schumaker2009,
  title={A {$C^1$} quadratic trivariate macro-element space defined over arbitrary tetrahedral partitions},
  author={Schumaker, L. L. and Sorokina, T. and Worsey, A. J.},
  journal={Journal of Approximation Theory},
  volume={158},
  number={1},
  pages={126--142},
  year={2009},
}

@article{sorokina2008,
  title={A multivariate {P}owell--{S}abin interpolant},
  author={Sorokina, T. and Worsey, A. J.},
  journal={Advances in Computational Mathematics},
  volume={29},
  pages={71--89},
  year={2008},
}

@article{speleers2013a,
  title={Construction of normalized {B}-splines for a family of smooth spline spaces over {P}owell--{S}abin triangulations},
  author={Speleers, H.},
  journal={Constructive Approximation},
  volume={37},
  number={1},
  pages={41--72},
  year={2013},
}

@article{speleers2013b,
  title={Multivariate normalized {P}owell--{S}abin {B}-splines and quasi-interpolants},
  author={Speleers, H.},
  journal={Computer Aided Geometric Design},
  volume={30},
  number={1},
  pages={2--19},
  year={2013},
}

@article{wang1992,
  title={A {$C^2$}-quintic spline interpolation scheme on triangulation},
  author={Wang, T.},
  journal={Computer Aided Geometric Design},
  volume={9},
  number={5},
  pages={379--386},
  year={1992},
}

@incollection{wang1990,
  title={{$S_{\mu+1}^{\mu}$} surface interpolations over triangulations},
  author={Wang, R.-H. and Shi, X.-Q.},
  booktitle={Approximation, Optimization and Computing: Theory and Applications},
  pages={205--208},
  year={1990},
  publisher={Elsevier Science Publishers B.V.},
}

@article{worsey1987,
  title={An $n$-dimensional {C}lough--{T}ocher interpolant},
  author={Worsey, A. J. and Farin, G.},
  journal={Constructive Approximation},
  volume={3},
  number={1},
  pages={99--110},
  year={1987},
}

@article{worsey1988,
  title={A trivariate {P}owell--{S}abin interpolant},
  author={Worsey, A. J. and Piper, B.},
  journal={Computer Aided Geometric Design},
  volume={5},
  number={3},
  pages={177--186},
  year={1988},
}

@article{zenisek1973,
  title={Polynomial approximation on tetrahedrons in the finite element method},
  author={{\v{Z}en\'i\v{s}ek}, A.},
  journal={Journal of Approximation Theory},
  volume={7},
  number={4},
  pages={334--351},
  year={1973},
}

@article{zenisek1974,
  title={A general theorem on triangular finite {$C^{(m)}$}-elements},
  author={{\v{Z}en\'i\v{s}ek}, A.},
  journal={RAIRO Analyse Num\'erique},
  volume={8},
  number={2},
  pages={119--127},
  year={1974},
}

\end{document}